\documentclass[12pt,amstex]{amsart}
\usepackage{stmaryrd}

\usepackage{epsfig}
\usepackage{amsmath}
\usepackage{amssymb}
\usepackage{amscd}
\usepackage{graphicx}

\usepackage{pstricks}

\usepackage{tocvsec2}

\usepackage{hyperref}

\topmargin=0pt \oddsidemargin=0pt \evensidemargin=0pt
\textwidth=15cm \textheight=22.5cm \raggedbottom

\input xy
\xyoption{all}

\newtheorem{thm}{Theorem}[section]
\newtheorem{lem}[thm]{Lemma}

\newtheorem{prop}[thm]{Proposition}

\newtheorem{cor}[thm]{Corollary}
\theoremstyle{definition}
\newtheorem{de}[thm]{Definition}

\theoremstyle{remark}
\newtheorem{rem}[thm]{Remark}

\numberwithin{equation}{section}

\def \N {\mathbb N}
\def \C {\mathbb C}
\def \Z {\mathbb Z}
\def \R {\mathbb R}
\def \A {\mathbb{A}}
\def \E {\mathbb{E}}

\def \B {\mathcal{B}}
\def \F {\mathcal{F}}
\def \G {\mathcal{G}}
\def \I {\mathcal{I}}
\def \J {\mathcal{J}}

\def \X {\mathcal{X}}
\def \Y {\mathcal{Y}}
\def \ZZ {\mathcal{Z}}
\def \O {\mathcal{O}}

\def \Q {{\bf Q}}

\def \id {{\rm id}}
\def \h {\hat }

\def \HK {\interleave}

\def \a {\alpha }
\def \b {\beta}
\def \ep {\epsilon}
\def \d {\delta}
\def \D {\Delta}
\def \ll {\lambda}

\def \lra{\longrightarrow}

\begin{document}
\title[Strictly ergodic models and convergence]{Strictly ergodic models and
the convergence of non-conventional pointwise ergodic averages}

\author{Wen Huang}
\author{Song Shao}
\author{Xiangdong Ye}

\address{Department of Mathematics, University of Science and Technology of China,
Hefei, Anhui, 230026, P.R. China.}

\email{wenh@mail.ustc.edu.cn}\email{songshao@ustc.edu.cn}
\email{yexd@ustc.edu.cn}

\subjclass[2000]{Primary: } \keywords{ergodic averages, model,
cubes, face transformations}

\thanks{Huang is partially supported by NNSF for Distinghuished Young School (11225105),
Shao and Ye is partially supported by NNSF of China (11171320), and Huang and Ye is partially supported by NNSF of China (11371339).}

\date{June 09, 2010}
\date{June 27, 2010}
\date{July 17, 2010}
\date{August 01, 2010}
\date{October 09, 2010}
\date{April 30, 2012}
\date{May 03, 2012}
\date{July, 30, 2013}
\date{Oct. 22, 2013}
\date{Nov. 12, 2013}

\begin{abstract}

The well-known Jewett-Krieger's Theorem states that each ergodic
system has a strictly ergodic model. Strengthening the model by requiring that it
is strictly ergodic under some group actions, and
building the connection of the new model with the convergence of
pointwise non-conventional ergodic averages we prove that for an
ergodic system $(X,\X,\mu, T)$, $d\in\N$, $f_1, \ldots, f_d \in
L^{\infty}(\mu)$, 
the averages
\begin{equation*}
    \frac{1}{N^2} \sum_{(n,m)\in F_N}
    f_1(T^nx)f_2(T^{n+m}x)\ldots f_d(T^{n+(d-1)m}x)
\end{equation*}
converge $\mu$ a.e. We remark that the same method can be used to
show the pointwise convergence of ergodic averages along cubes which
was firstly proved by Assani and then extended to a general case by Chu and Franzikinakis.

\end{abstract}

\maketitle





\section{Introduction}\label{sec-1}
In the introduction we will state the main results of the paper and
give some backgrounds.

\subsection{Main results} Throughout this paper, by a {\it topological dynamical
system} (t.d.s. for short) we mean a pair $(X, T)$, where $X$ is a
compact metric space  and $T$ is a homeomorphism from $X$ to itself.
A {\em measurable system} (m.p.t. for short) is a quadruple $(X,\X,
\mu, T)$, where $(X,\X,\mu )$ is a Lebesgue probability space and $T
: X \rightarrow X$ is an invertible measure preserving
transformation.

Let $(X,\X, \mu, T)$ be an ergodic m.p.t. We say that  $(\h{X}, T)$
is a {\em topological model} (or just a {\em model}) for $(X,\X,
\mu, T)$ if $(\h{X}, T)$ is a t.d.s. and there exists an invariant
probability measure $\h{\mu}$ on the Borel $\sigma$-algebra
$\mathcal{B}(\h{X})$ such that the systems $(X,\X, \mu, T)$ and
$(\h{X}, \mathcal{B}(\h{X}), \h{\mu}, T)$ are measure theoretically
isomorphic.

\medskip

The well-known Jewett-Krieger's theorem  \cite{Jewett, Krieger}
states that every ergodic system has a strictly ergodic model. We
note that one can add some additional properties to the topological
model. For example, in \cite{Lehrer} Lehrer showed that the strictly
ergodic model can be required to be a topological (strongly) mixing
system in addition.

Let $(\h{X},T)$ be a t.d.s. Write $(x,\ldots,x)$ ($2^d$ times) as
$x^{[d]}$. Let $\F^{[d]}, \G^{[d]}$ and $\Q^{[d]}(\h{X})$ be the
face group of dimension $d$, the parallelepiped group of dimension
$d$ and the dynamical parallelepiped of dimension $d$ respectively
(see Section \ref{section-topo} for definitions). The orbit closure of $x^{[d]}$ under the face group
action will be denote by $\overline{\F^{[d]}}(x^{[d]})$. It was
shown by Shao and Ye \cite{SY} that if $(\h{X},T)$ is minimal then
$(\overline{\F^{[d]}}(x^{[d]}),\F^{[d]})$ is minimal for all $x\in
\hat{X}$ and $(\Q^{[d]}(\h{X}), \G^{[d]})$ is minimal.

In this paper we will strengthen Jewett-Krieger's theorem in another
direction. Namely, we have the following Theorem A and Theorem B.

\medskip
\noindent {\bf Theorem A:} {\em Let $(X,\X, \mu, T)$ be an ergodic
m.p.t. and $d\in\N$. Then
\begin{enumerate}
\item it has a strictly ergodic model $(\h{X}, T)$ such that $(\overline{\F^{[d]}}(x^{[d]}),\F^{[d]})$ is strictly ergodic for
all $x\in \hat{X}$.

\item it has a strictly ergodic model $(\h{X}, T)$ such that
$(\Q^{[d]}(\h{X}), \G^{[d]})$ is strictly ergodic.

\end{enumerate}}

\medskip

Now let $\tau_d=T\times \ldots\times T \ (d \ \text{times})$ and
$\sigma_d =T\times \ldots \times T^{d}$. The group generated by
$\tau_d$ and $\sigma_d$ is denoted $\langle\tau_d, \sigma_d\rangle$.
For any $x\in \h{X}$, let $N_d(\h{X},x)=\overline{\O((x,\ldots,x),
\langle\tau_d, \sigma_d\rangle)}$, the orbit closure of
$(x,\ldots,x)$ ($d$ times) under the action of the group
$\langle\tau_d, \sigma_d\rangle$. We remark that if $(\h{X},T)$ is
minimal, then all $N_d(\h{X},x)$ coincide, which will be denoted by
$N_d(\h{X})$. It was shown by Glasner \cite{G94} that if $(\h{X},T)$
is minimal, then $(N_d(\h{X}), \langle\tau_d, \sigma_d\rangle)$ is
minimal.

\medskip

\noindent {\bf Theorem B:} {\em Let $(X,\X, \mu, T)$ be an ergodic
m.p.t. and $d\in\N$. Then it has a strictly ergodic model $(\h{X}, T)$ such that
$(N_d(\h{X}), \langle\tau_d, \sigma_d\rangle)$ is strictly ergodic.
}

\medskip
We note that we have formulas to compute the unique measure in Theorems A and~ B.
Particularly, when $(X,\X, \mu, T)$ is weakly mixing, the unique measure is nothing but
the product measure. Moreover, for small $d$ we also have explicit description of the unique measure.

\medskip
Surprisedly, Theorems A and B are closely related the pointwise
convergence of non-conventional multiple ergodic averages. That is,
we can show Theorems C and D as applications of Theorems A and B
respectively.

\medskip
\noindent {\bf Theorem C:} {\em Let $(X,\X, \mu, T)$ be an ergodic
m.p.t. and $d\in \N$. Then

\begin{enumerate}
\item for functions $f_\ep\in L^{\infty}(\mu), \
\ep\in \{0,1\}^d, \ep\not=(0,\ldots,0) $, the averages
\begin{equation}\label{C1}
    \frac{1}{N^d} \sum_{{\bf n}\in \{0,1, \ldots,N-1\}^d} \prod _{(0,\ldots,0)\neq\ep\in \{0,1\}^d}
    f_\ep (T^{{\bf n}\cdot \ep}x)
\end{equation}
converge $\mu$ a.e..

\item  for functions $f_\ep\in L^{\infty}(\mu), \ \ep\in \{0,1\}^d$, the averages
\begin{equation}\label{C2}
    \frac{1}{N^{d+1}} \sum_{{\bf n}\in \{0,1,\ldots,N-1\}^d \atop{n\in \{0,1,\ldots,N-1\}}}
    \prod _{\ep\subset [d]}
    f_\ep (T^{n+{\bf n}\cdot \ep}x)
\end{equation}
converge $\mu$ a.e..
\end{enumerate}}

\medskip
\noindent {\bf Theorem D:} {\em Let $(X,\X,\mu, T)$ be an ergodic
m.p.t. and $d\in\N$. Then for $f_1, \ldots, f_d \in L^{\infty}(\mu)$
the averages
\begin{equation}\label{D}
    \frac{1}{N^2} \sum_{n,m\in [0,N-1]}
    f_1(T^nx)f_2(T^{n+m}x)\ldots f_d(T^{n+(d-1)m}x)
\end{equation}
converge $\mu$ a.e.}

As we said above we have formulas to compute the limits.
For example the limit is Theorem D is
$\int_{N_d({X})} \bigotimes f_i\text{d}\lambda_{\tau,\sigma;d}$, where $\lambda_{\tau,\sigma;d}$ is defined in (\ref{limit-D})
and we assume $(X,\X,\mu, T)$ itself is the model defined in Theorem B.

\subsection{Backgrounds}
In this subsection we will give backgrounds of our research.

\subsubsection{Topological model}
The pioneering work on topological model was done by Jewett in
\cite{Jewett}. He proved the theorem under the additional assumption
that $T$ is weakly mixing and
conjectured that if the condition of being weakly mixing is
replaced by that of being ergodic, the theorem would still be valid.
Jewett's conjecture was proved by Krieger in \cite{Krieger} soon.
This was followed by the papers of Hansel and Raoult \cite{HanRao}
and Denker \cite{Denker}, giving different proofs of the theorem in
the general ergodic case (see also \cite{DCS}).
Bellow and Furstenberg \cite{BeF} showed how with an additional
piece of information the Key Lemma in Jewett's paper -- and hence
Jewett's whole proof -- carries over to the general ergodic case.
One can add some additional properties to the topological model. For example, in \cite{Lehrer}
Lehrer showed that the strictly ergodic model can be required as a
topological (strongly) mixing system in addition. Our Theorems A and B strengthen Jewett-Krieger Theorem
in other direction, i.e. we can require the model to be well behavioral under some group actions.

It is well known that each m.p.t. has a topological model
\cite{F}. There are universal models, models for some group actions and models for some special classes.
Weiss \cite{Weiss89} showed the following nice result:
There exists a minimal t.d.s. $(X, T)$ with the property that for
every aperiodic ergodic m.p.t. $(Y,\Y,\nu, S)$ there exists a
$T$-invariant Borel probability measure $\mu$ on $X$ such that the
systems $(Y,\Y,\nu, S)$ and $(X,\B(X),\mu, T)$ are measure
theoretically isomorphic. Note that there exists universal model for all ergodic m.p.t.
with entropy less than or equal to a given  number $t>0$ \cite{SW} and it is interesting that there is no such a
model for zero entropy m.p.t. \cite{SJ}. Weiss \cite{Weiss85} showed that Jewett-Krieger Theorem can be
generalized from $\Z$-actions to commutative group actions (in
\cite{Weiss85} there is only an outline of a proof, and the
exposition of his proof can be found in \cite{Weiss00}, more details
can be found in \cite{Glasner, GW06}). An ergodic system has a doubly
minimal model if and only if it has zero entropy \cite{Weiss95}
(other topological models for zero entropy systems can be found in
\cite{HM, DL}); and an ergodic system has a strictly ergodic, UPE
(uniform positive entropy) model if and only if it has positive
entropy \cite{GW94}.

\medskip

We say that $\h{\pi}: \h{X}\rightarrow \h{Y}$ is a {\em topological
model} for $\pi: (X,\X, \mu, T)\rightarrow (Y,\Y, \nu, T)$ if
$\h{\pi}$ is a topological factor map and there exist measure
theoretical isomorphisms $\phi$ and $\psi$ such that the diagram
\[
\begin{CD}
X @>{\phi}>> \h{X}\\
@V{\pi}VV      @VV{\h{\pi}}V\\
Y @>{\psi }>> \h{Y}
\end{CD}
\]
is commutative, i.e. $\h{\pi}\phi=\psi\pi$. Weiss \cite{Weiss85}
generalized the theorem of Jewett-Krieger to the relative case.
Namely he proved that if $\pi: (X,\X, \mu, T)\rightarrow (Y,\Y, \nu,
T)$ is a factor map with $(X,\X, \mu, T)$ ergodic and
$(\h{Y},\h{\Y}, \h{\nu}, T)$ is a uniquely ergodic model for $(Y,\Y,
\nu, T)$, then there is a uniquely ergodic model $(\h{X}, \h{\X},
\h{\mu}, T)$ for $(X,\X, \mu, T)$ and a factor map $\h{\pi}:
\h{X}\rightarrow \h{Y}$ which is a model for $\pi: X\rightarrow Y$.
We will refer this theorem as {\it Weiss's Theorem}.
We note that in \cite{Weiss85} Weiss pointed that the relative case holds
for commutative group actions.


\subsubsection{Ergodic averages} In this subsection we recall some
results related to pointwise ergodic averages.

The first pointwise ergodic theorem was proved by Birkhoff in 1931.
Followed from Furstenberg's work in 1977, problems concerning the convergence of
multiple ergodic averages (in $L^2$ or pointwisely) become a very important part of the study
of ergodic theory.
\medskip

The convergence of the averages
\begin{equation}\label{multiple1}
    \frac 1 N\sum_{n=0}^{N-1}f_1(T^nx)\ldots
f_d(T^{dn}x)
\end{equation}
in $L^2$ norm was established by Host and Kra \cite{HK05} (see also Ziegler \cite{Z}). We note that in their
proofs, the characteristic factors play a great role. The multiple ergodic average for commuting transformations was obtained by Tao
\cite{Tao} using finitary ergodic method, see \cite{Austin,H} for
more traditional ergodic proofs. Recently, convergence of multiple ergodic averages for nilpotent group actions
was obtained by Walsh \cite{Walsh}.


\medskip

The first breakthrough on pointwise convergence of (\ref{multiple1}) for $d > 1$ is due to Bourgain, who showed in \cite{B90} that for $d = 2$,
the limit in (\ref{multiple1}) exists a.e. for all $f_1, f_2 \in  L^\infty$. It is a big open question if the same holds
for $d>2$. Very recently, Assani claimed the convergence for weakly mixing transformations \cite{Assani13}.

The study of the limiting behavior of the averages along cubes was
initiated by Bergelson in \cite{Bergelson00}, where convergence in
$L^2(\mu)$ was shown in dimension 2. Bergelson's result was later
extended by Host and Kra for cubic averages of an arbitrary
dimension $d$ in \cite{HK05}. More recently in \cite{Assani}, Assani
established pointwise convergence for cubic averages of an arbitrary
dimension $d$. Chu and Franzikinakis
\cite{CF} extended the result to a very general case, i.e. they showed that for measure preserving
transformations $T_\ep: X \rightarrow X$, functions $f_\ep\in
L^{\infty}(\mu), \ (0,\ldots,0) \neq \ep\in \{0,1\}^d$, the averages
\begin{equation*}
    \frac{1}{N^d} \sum_{{\bf n}\in [0,N-1]^d} \prod _{(0,\ldots,0) \neq \ep\in \{0,1\}^d}
    f_\ep (T_\ep^{{\bf n}\cdot \ep}x)
\end{equation*}
converge $\mu$ a.e.. Moreover, they obtained in the same paper that
\begin{equation*}
    \frac{1}{Nb(N)} \sum_{1\le m\le N,1\le n\le b(N)}
    f_1(T^{m+n}x)f_2(T^{m+2n}x)\ldots f_d(T^{m+dn}x)
\end{equation*}
converges pointwisely, where $b(N)/N^{1/d}\lra 0$ as $N \lra\infty$.

We remark that our method to prove Theorem D does not apply the general case
as shown by Chu and Franzikinakis in \cite{CF}. The advantage of our method
is that we can give formulas for the limits, meanwhile this can not obtained in
\cite{Assani, CF}.

\subsection{Main ideas of the proofs}

Now we describe the main ideas and ingredients in the proof of Theorem A (the proof of Theorem B will follow by the similar idea).
The first fact we face is that for an ergodic m.p.t. $(X,\X, \mu, T)$,
not every strictly ergodic model is its $\F^{[d]}$-strictly
ergodic model. For example, let $(X,\X, \mu, T)$ be a Kronecker
system. By Jewett-Krieger' Theorem, we may assume that $(X,T)$
is a topologically weakly mixing minimal system and strictly
ergodic. By \cite[Theorem 3.11]{SY}
$(\overline{\F^{[d]}}(x^{[d]}),\F^{[d]})$ is minimal for all $x\in
X$ and $\overline{\F^{[d]}}(x^{[d]})=\{x\}\times X^{[d]}_*$. It is
easy to see that $\d_x\times \mu^{\bigotimes 2^d-1}$ and
$\mu^{[d]}_*$ are two different invariant measures on it (see Section 2 for the definitions). This indicates that
to obtain Theorem A, Jewett-Krieger' Theorem is not enough for our purpose. Fortunately, we find that
Weiss's Theorem \cite{Weiss85} is a right tool.

Precisely, let $\pi_d: X\rightarrow Z_d$ be the factor map from
$X$ to its $d$-step nilfactor $Z_d$. By definition, $Z_d$ may be
regarded as a topological system in the natural way. By Weiss's Theorem
there is a uniquely ergodic model $(\h{X}, \h{\X},
\h{\mu}, T)$ for $(X,\X, \mu, T)$ and a factor map $\h{\pi_d}:
\h{X}\rightarrow Z_d$ which is a model for $\pi_d: X\rightarrow
Z_d$.
\[
\begin{CD}
X @>{\phi}>> \h{X}\\
@V{\pi_d}VV      @VV{\h{\pi_d}}V\\
Z_d @>{ }>> Z_d
\end{CD}
\]

We then show (though it is difficult) that $(\h{X},T)$ is what we need. To do this we heavily use the
theory of joinings (for a reference, see \cite{Glasner})
and some facts related to $d$-step nilsystems. Once Theorem A (resp. B) is proven, Theorem C (resp. D) will follow by an argument using some
well known theorems related to pointwise convergence for $\Z^d$ actions by and for uniquely ergodic systems.

We remark that currently we do not know how to prove the popintwise convergence of (\ref{multiple1})
using  similar ideas.

\subsection{Organization of the paper}

In Section \ref{section-topo}, we give basic notions and facts about
dynamical parallelepipeds and characteristic factors. In Section
\ref{sec-model} we define $\F$ and $\G$-strictly
ergodic models and prove that each ergodic system has $\F$ and
$\G$-strictly ergodic model. Moreover, we build the connection between $\F$ and
$\G$-strictly ergodic models with pointwise  convergence of averages
along cubes and faces, and deduce the existence of the limit of the averages. In the two sections followed, we study arithmetic
progression models and prove pointwise ergodic theorem along arithmetic progressions.

\section{Dynamical parallelepipeds and characteristic factors}\label{section-topo}

In this section we introduce basic knowledge about dynamical
parallelepipeds and characteristic factors. For more details, see
\cite{HK05,HK09, HKM} etc.

\subsection{Ergodic theory and topological dynamics}

In this subsection we introduce some basic notions in ergodic theory
and topological dynamics. For more information, see Appendix.

\subsubsection{Measurable systems}



For a m.p.t. $(X,\X, \mu, T)$ we write $\I=\I (T)$ for the $\sigma$-algebra $\{A\in \X : T^{-1}A =
A\}$ of invariant sets. A m.p.t. is {\em ergodic} if all
the $T$-invariant sets have measure either $0$ or $1$. $(X,\X, \mu,
T)$ is {\em weakly mixing} if the product system $(X\times X,
\X\times \X, \mu\times \mu, T\times T)$ is erdogic.

\medskip

A {\em homomorphism} from m.p.t. $(X,\X, \mu, T)$ to $(Y,\Y, \nu,
S)$ is a measurable map $\pi : X_0 \rightarrow  Y_0$, where $X_0$ is
a $T$-invariant subset of $X$ and $Y_0$ is an $S$-invariant subset
of $Y$, both of full measure, such that $\pi_*\mu=\mu\circ
\pi^{-1}=\nu$ and $S\circ \pi(x)=\pi\circ T(x)$ for $x\in X_0$. When
we have such a homomorphism we say that $(Y,\Y, \nu, S)$
is a {\em factor} of  $(X,\X, \mu , T)$. If the factor map
$\pi: X_0\rightarrow  Y_0$ can be chosen to be bijective, then we
say that $(X,\X, \mu, T)$ and $(Y,\Y, \nu, S)$ are {\em
(measure theoretically) isomorphic} (bijective maps on Lebesgue
spaces have measurable inverses). A factor can be characterized
(modulo isomorphism) by $\pi^{-1}(\Y)$, which is a $T$-invariant
sub- $\sigma$-algebra of $\X$, and conversely any $T$-invariant
sub-$\sigma$-algebra of $\X$ defines a factor. By a classical result abuse
of terminology we denote by the same letter the $\sigma$-algebra
$\Y$ and its inverse image by $\pi$. In other words, if $(Y,\Y, \nu,
S)$ is a factor of $(X,\X, \mu, T)$, we think of $\Y$ as a
sub-$\sigma$-algebra of $\X$.

\subsubsection{Topological dynamical systems}


A t.d.s. $(X, T)$ is {\em transitive} if there exists
some point $x\in X$ whose orbit $\O(x,T)=\{T^nx: n\in \Z\}$ is dense
in $X$ and we call such a point a {\em transitive point}. The system
is {\em minimal} if the orbit of any point is dense in $X$. This
property is equivalent to saying that X and the empty set are the
only closed invariant sets in $X$. $(X,T)$ is {\em topologically
weakly mixing} if the product system $(X\times X, T\times T)$ is
transitive.

\medskip

A {\em factor} of a t.d.s. $(X, T)$ is another t.d.s. $(Y, S)$ such that there exists a continuous and
onto map $\phi: X \rightarrow Y$ satisfying $S\circ \phi = \phi\circ
T$. In this case, $(X,T)$ is called an {\em extension } of $(Y,S)$.
The map $\phi$ is called a {\em factor map}.


\subsubsection{$M(X)$ and $M_T(X)$}

For a t.d.s. $(X,T)$, denote by $M(X)$ the set of all
probability measure on $X$. Let $M_T(X)=\{\mu\in M(X):
T_*\mu=\mu\circ T^{-1}=\mu\}$ be the set of all $T$-invariant
measure of $X$. It is well known that $M_T(X)\neq \emptyset$.

\begin{de}
A t.d.s. $(X,T)$ is called {\em uniquely ergodic} if
there is a unique $T$-invariant probability measure on $X$. It is
called {\em strictly ergodic} if it is uniquely ergodic and minimal.
\end{de}

\subsection{Cubes and faces}

\subsubsection{} Let $X$ be a set, let $d\ge 1$ be an integer, and write
$[d] = \{1, 2,\ldots , d\}$. We view $\{0, 1\}^d$ in one of two
ways, either as a sequence $\ep=\ep_1\ldots \ep_d$ of $0'$s and
$1'$s, or as a subset of $[d]$. A subset $\ep$ corresponds to the
sequence $(\ep_1,\ldots, \ep_d)\in \{0,1\}^d$ such that $i\in \ep$
if and only if $\ep_i = 1$ for $i\in [d]$. For example, ${\bf
0}=(0,0,\ldots,0)\in \{0,1\}^d$ is the same to $\emptyset \subset
[d]$.

Let $V_d=\{0,1\}^d=[d]$ and $V_d^*=V_d\setminus \{{\bf
0}\}=V_d\setminus \{\emptyset\}$.
If ${\bf n} = (n_1,\ldots, n_d)\in \Z^d$ and $\ep\in \{0,1\}^d$, we
define
$${\bf n}\cdot \ep = \sum_{i=1}^d n_i\ep_i .$$
If we consider $\ep$ as $\ep\subset [d]$, then ${\bf n}\cdot \ep  =
\sum_{i\in \ep} n_i .$

\subsubsection{}

We denote $X^{2^d}$ by $X^{[d]}$. A point ${\bf x}\in X^{[d]}$ can
be written in one of two equivalent ways, depending on the context:
$${\bf x} = (x_\ep :\ep\in \{0,1\}^d )= (x_\ep : \ep\subset [d]). $$
Hence $x_\emptyset =x_{\bf 0}$ is the first coordinate of ${\bf x}$.
As examples, points in $X^{[2]}$ are like
$$(x_{00},x_{10},x_{01},x_{11})=(x_{\emptyset}, x_{\{1\}},x_{\{2\}},x_{\{1,2\}}).$$

For $x \in X$, we write $x^{[d]} = (x, x,\ldots , x)\in  X^{[d]}$.
The diagonal of $X^{[d]}$ is $\D^{[d]} = \{x^{[d]}: x\in X\}$.
Usually, when $d=1$, denote diagonal by $\D_X$ or $\D$ instead of
$\D^{[1]}$.

A point ${\bf x} \in X^{[d]}$ can be decomposed as ${\bf x} = ({\bf
x'},{\bf  x''})$ with ${\bf x}', {\bf x}''\in X^{[d-1]}$, where
${\bf x}' = (x_{\ep0} : \ep\in \{0,1\}^{d-1})$ and ${\bf x}''=
(x_{\ep1} : \ep\in \{0,1\}^{d-1})$. We can also isolate the first
coordinate, writing $X^{[d]}_* = X^{2^d-1}$ and then writing a point
${\bf x}\in X^{[d]}$ as ${\bf x} = (x_\emptyset, {\bf x}_*)$, where
${\bf x}_*= (x_\ep : \ep\neq \emptyset) \in X^{[d]}_*$.

\subsubsection{}

The {\em faces} of dimension $r$ of a point in ${\bf x}\in X^{[d]}$
are defined as follows. Let $J\subset [d]$ with $|J| = d-r$ and
$\xi\in \{0,1\}^{d-r}$. The elements $(x_\ep : \ep\in \{0,1\}^{d},
\ep_J=\xi)$ of $X^{[r]}$ are called {\em faces of dimension $r$ } of
${\bf x}$, where $\ep_J = (\ep_i : i \in J)$. Thus any face of
dimension $r$ defines a natural projection from $X^{[d]}$ to
$X^{[r]}$, and we call this the projection along this face.

\subsection{Dynamical parallelepipeds}

\begin{de}
Let $(X, T)$ be a topological dynamical system and let $d\ge 1$ be
an integer. We define $\Q^{[d]}(X)$ to be the closure in $X^{[d]}$
of elements of the form $$(T^{{\bf n}\cdot \ep}x=T^{n_1\ep_1+\ldots
+ n_d\ep_d}x: \ep= (\ep_1,\ldots,\ep_d)\in\{0,1\}^d) ,$$ where ${\bf
n} = (n_1,\ldots , n_d)\in \Z^d$ and $ x\in X$. When there is no
ambiguity, we write $\Q^{[d]}$ instead of $\Q^{[d]}(X)$. An element
of $\Q^{[d]}(X)$ is called a (dynamical) {\em parallelepiped of
dimension $d$}.
\end{de}

As examples, $\Q^{[2]}$ is the closure in $X^{[2]}=X^4$ of the set
$$\{(x, T^mx, T^nx, T^{n+m}x) : x \in X, m, n \in \Z\}$$ and $\Q^{[3]}$
is the closure in $X^{[3]}=X^8$ of the set $$\{(x, T^mx, T^nx,
T^{m+n}x, T^px, T^{m+p}x, T^{n+p}x, T^{m+n+p}x) : x\in X, m, n, p\in
\Z\}.$$

\begin{de}
Let $\phi: X\rightarrow Y$ and $d\in \N$. Define $\phi^{[d]}:
X^{[d]}\rightarrow Y^{[d]}$ by $(\phi^{[d]}{\bf x})_\ep=\phi x_\ep$
for every ${\bf x}\in X^{[d]}$ and every $\ep\subset [d]$.
Let $(X, T)$ be a system and $d\ge 1$ be an integer. The {\em
diagonal transformation} of $X^{[d]}$ is the map $T^{[d]}$.
\end{de}

\begin{de}
{\em Face transformations} are defined inductively as follows: Let
$T^{[0]}=T$, $T^{[1]}_1=\id \times T$. If
$\{T^{[d-1]}_j\}_{j=1}^{d-1}$ is defined already, then set
\begin{equation}\label{def-T[d]}
\begin{split}
T^{[d]}_j&=T^{[d-1]}_j\times T^{[d-1]}_j, \ j\in \{1,2,\ldots, d-1\},\\
T^{[d]}_d&=\id ^{[d-1]}\times T^{[d-1]}.
\end{split}
\end{equation}
\end{de}


The {\em face group} of dimension $d$ is the group $\F^{[d]}(X)$ of
transformations of $X^{[d]}$ spanned by the face transformations.
The {\em parallelepiped group} of dimension $d$ is the group
$\G^{[d]}(X)$ spanned by the diagonal transformation and the face
transformations. We often write $\F^{[d]}$ and $\G^{[d]}$ instead of
$\F^{[d]}(X)$ and $\G^{[d]}(X)$, respectively. For $\G^{[d]}$ and
$\F^{[d]}$, we use similar notations to that used for $X^{[d]}$:
namely, an element of either of these groups is written as $S =
(S_\ep : \ep\in\{0,1\}^d)$. In particular, $\F^{[d]} =\{S\in
\G^{[d]}: S_\emptyset ={\rm id}\}$.

\medskip

For convenience, we denote the orbit closure of ${\bf x}\in X^{[d]}$
under $\F^{[d]}$ by $\overline{\F^{[d]}}({\bf x})$, instead of
$\overline{\O({\bf x}, \F^{[d]})}$.
It is easy to verify that $\Q^{[d]}$ is the closure in $X^{[d]}$ of
$$\{Sx^{[d]} : S\in \F^{[d]}, x\in X\}.$$
If $x$ is a transitive point of $X$, then $\Q^{[d]}$ is the closed
orbit of $x^{[d]}$ under the group $\G^{[d]}$.

\subsection{Measure $\mu^{[k]}$}

\subsubsection{Notation}

When $f_\ep$, $\ep\in V_k=\{0,1\}^d$, are $2^k$ real or complex
valued functions on the set $X$, we define a function
$\bigotimes_{\ep\in V_k} f_\ep$ on $X^{[k]}$ by
\begin{equation*}
    \bigotimes_{\ep\in V_k} f_\ep ({\bf x})=\prod_{\ep\in V_k}
    f_\ep(x_\ep).
\end{equation*}

\subsubsection{}
We define by induction a $T^{[k]}$-invariant measure $\mu^{[k]}$ on
$X^{[k]}$ for every integer $k \ge 0$.

Set $X^{[0]} = X$, $T^{[0]} = T$ and $\mu^{[0]}=\mu$. Assume that
$\mu^{[k]}$ is defined. Let $\I^{[k]}$ denote the
$T^{[k]}$-invariant $\sigma$-algebra of $(X^{[k]}, \mu^{[k]},
T^{[k]})$. Identifying $X^{[k+1]}$ with $X^{[k]}\times X^{[k]}$ as
explained above, we define the system $(X^{[k+1]}, \mu^{[k+1]},
T^{[k+1]})$ to be the relatively independent joining of two copies
of $(X^{[k]}, \mu^{[k]}, T^{[k]})$ over $\I^{[k]}$. That is,
$$\I^{[k]}=\{A\subset X^{[k]}: T^{[k]}A=A\},$$ and
$$\mu^{[k+1]}=\mu^{[k]}\mathop{\times}_{\I^{[k]}} \mu^{[k]}.$$
Equivalently, for all bounded function $f_\ep, \ep\in V_{k+1}$ of
$X$,
\begin{equation}\label{}
    \int_{X^{[k+1]}} \bigotimes_{\ep\in V_{k+1}} f_\ep\
    d\mu^{[k+1]}=\int _{X^{[k]}} \E\Big( \bigotimes_{\eta\in V_k}
    f_{\eta 0}\Big|\I^{[k]}\Big)\E\Big( \bigotimes_{\eta\in V_k}
    f_{\eta 1}\Big|\I^{[k]}\Big)\ d\mu^{[k]}.
\end{equation}

Since $(X, \mu, T)$ is ergodic, $\I^{[0]}$ is the trivial
$\sigma$-algebra and $\mu^{[1]}=\mu\times \mu$. If $(X, \mu, T)$ is
weakly mixing, then by induction $\I^{[k]}$ is trivial and
$\mu^{[k]}$ is the $2^k$ Cartesian power $\mu^{\bigotimes 2^k}$ of
$\mu$ for $k\ge 1$.

We now give an equivalent formulation of the definition of these
measures. For an integer $k\ge 1$, let $(\Omega_k, P_k)$ be the system corresponding to
the $\sigma$-algebra $\I^{[k]}$ and let
\begin{equation}\label{}
    \mu^{[k]}=\int_{\Omega_k} \mu^{[k]}_\omega \ d P_k(\omega)
\end{equation}
denote the ergodic decomposition of $\mu^{[k]}$ under $T^{[k]}$.
Then by definition
\begin{equation}\label{}
\mu^{[k+1]}=\int_{\Omega_k} \mu^{[k]}_\omega\times \mu^{[k]}_\omega
\ d P_k(\omega).
\end{equation}

We generalize this formula. For $k, l \ge 1$, the concatenation of
an element $\a$ of $V_k$ with an element $\b$ of $V_l$ is the
element $\a\b$ of $V_{k+l}$. This defines a bijection of $V_k \times
V_l$ onto $V_{k+l}$ and gives the identification
$(X^{[k]})^{[l]}=X^{[k+1]}$. By \cite[Lemma 3.1.]{HK05}
\begin{equation}\label{}
    \mu^{[k+l]}=\int_{\Omega_k}(\mu^{[k]}_\omega)^{[l]}\
    dP_k(\omega).
\end{equation}

\subsection{Characteristic factors $(Z_k,\mu_k)$}

\subsubsection{}
Notice that in \cite{HK05}, $\G^{k}$ and $\F^{[k]}$ are denoted by
$\mathcal{T}_{k-1}^{[k]}$ and $\mathcal{T}_*^{[k]}$ respectively.
Let $\J^{[k]}$ denote the $\sigma$-algebra of sets on $X^{[k]}$ that
are invariant under the group $\F^{[k]}$. On $(X^{[k]},\mu^{[k]})$,
the $\sigma$-algebra $\J^{[k]}$ coincides with the $\sigma$-algebra
of sets depending only on the coordinate ${\bf 0}$
(\cite[Proposition 3.4]{HK05}).

\begin{prop}\cite{HK05}
For all $k\in \N$, $(X^{[k]},\mu^{[k]})$ is ergodic for the group of
side transformations $\G^{[d]}$. And $(\Omega_k, P_k)$ is ergodic
under the action of the group $\F^{[k]}$.
\end{prop}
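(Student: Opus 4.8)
The plan is to prove both assertions simultaneously by induction on $k$, exploiting the way $\mu^{[k+1]}$ is constructed from $\mu^{[k]}$ as a relatively independent joining over $\I^{[k]}$. Write $(A)_k$ for the statement that $(X^{[k]},\mu^{[k]})$ is $\G^{[k]}$-ergodic and $(B)_k$ for the statement that $(\Omega_k,P_k)$ is $\F^{[k]}$-ergodic. The base case $(A)_0$ is exactly the hypothesis that $(X,\mu,T)$ is ergodic, since $\G^{[0]}$ is generated by $T^{[0]}=T$. I would then close the induction through the two implications $(A)_k\Rightarrow(B)_k$ and $(B)_k\Rightarrow(A)_{k+1}$.

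For $(A)_k\Rightarrow(B)_k$: recall that $\Omega_k$ is the factor of $X^{[k]}$ attached to the $\sigma$-algebra $\I^{[k]}$ of $T^{[k]}$-invariant sets, and that each face transformation $T^{[k]}_j$ commutes with $T^{[k]}$, hence preserves $\I^{[k]}$ and descends to $\Omega_k$; the descended maps generate the $\F^{[k]}$-action on $(\Omega_k,P_k)$. A $\F^{[k]}$-invariant function on $\Omega_k$ lifts to a function on $X^{[k]}$ that is simultaneously $\I^{[k]}$-measurable (i.e.\ $T^{[k]}$-invariant) and $\F^{[k]}$-invariant, hence invariant under the group $\G^{[k]}=\langle T^{[k]},\F^{[k]}\rangle$. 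By $(A)_k$ this lift is constant, so $(B)_k$ follows.

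For $(B)_k\Rightarrow(A)_{k+1}$, which I expect to be the crux, I identify $X^{[k+1]}=X^{[k]}\times X^{[k]}$ with $\mu^{[k+1]}=\mu^{[k]}\times_{\I^{[k]}}\mu^{[k]}$, and let $F$ be $\G^{[k+1]}$-invariant. From (\ref{def-T[d]}) the group $\G^{[k+1]}$ contains $T^{[k+1]}_{k+1}=\id^{[k]}\times T^{[k]}$ and $T^{[k+1]}=T^{[k]}\times T^{[k]}$, hence also $T^{[k]}\times\id^{[k]}$; invariance of $F$ under both one-sided diagonals $T^{[k]}\times\id^{[k]}$ and $\id^{[k]}\times T^{[k]}$ forces $F$ to be measurable with respect to $\I^{[k]}\otimes\I^{[k]}$. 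The decisive point is that, because $\mu^{[k+1]}$ is the relatively independent joining over $\I^{[k]}$, the two copies of $\Omega_k$ coincide $\mu^{[k+1]}$-a.e.; concretely $\int a\otimes b\,d\mu^{[k+1]}=\int ab\,d\mu^{[k]}$ for $\I^{[k]}$-measurable $a,b$, so the image of $\mu^{[k+1]}$ on $\Omega_k\times\Omega_k$ is the diagonal self-joining, isomorphic to $(\Omega_k,P_k)$. Thus $F$ descends to a function $g$ on $\Omega_k$.

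Finally, $F$ is also invariant under $T^{[k+1]}_j=T^{[k]}_j\times T^{[k]}_j$ for $j\in\{1,\ldots,k\}$; on the diagonal copy of $\Omega_k$ these act as the descended $T^{[k]}_j$, which generate $\F^{[k]}$. Hence $g$ is $\F^{[k]}$-invariant, so by $(B)_k$ it is constant and $F$ is constant, giving $(A)_{k+1}$. The main obstacle is the middle step: justifying rigorously that two-sided invariance yields $\I^{[k]}\otimes\I^{[k]}$-measurability and that the joining restricts to the diagonal of $\Omega_k$. This is exactly where the "relatively independent over the invariant $\sigma$-algebra" structure of $\mu^{[k+1]}$ is indispensable, and here I would lean on the ergodicity of the fiber measures $\mu^{[k]}_\omega$ under $T^{[k]}$ together with the decomposition $\mu^{[k+1]}=\int_{\Omega_k}\mu^{[k]}_\omega\times\mu^{[k]}_\omega\,dP_k(\omega)$ recorded above to make the argument precise.
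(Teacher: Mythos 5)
The paper does not actually prove this proposition --- it is quoted from \cite{HK05} --- and your induction, with the two implications $(A)_k\Rightarrow(B)_k$ and $(B)_k\Rightarrow(A)_{k+1}$ driven by the ergodicity of the fibre measures $\mu^{[k]}_\omega$ together with the fact that the relatively independent joining projects to the diagonal self-joining of $(\Omega_k,P_k)$, is precisely the standard Host--Kra argument and is correct. The only point you leave implicit is that each face transformation $T^{[k]}_j$ preserves $\mu^{[k]}$ (so that it genuinely descends to a measure-preserving transformation of $(\Omega_k,P_k)$); this invariance is part of the same induction and should be carried along as a third statement $(C)_k$.
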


We consider the $2^k-1$-dimensional marginals of $\mu^{[k]}$. Recall
that $V^*_k = V_k \setminus \{{\bf 0}\}$. Consider a point ${\bf
x}\in X^{[k]}$ as a pair $(x_{\bf 0}, {\bf x_*})$, with $x_{\bf
0}\in X$ and ${\bf x_*}\in X^{[k]}_*$. Let $\mu^{[k]}_*$ denote the
measure on $X^{[k]}_*$, which is the image of $\mu^{[k]}$ under the
natural projection ${\bf x}\mapsto {\bf x_*}$ from $X^{[k]}$ onto
$X^{[k]}_*$.

All the transformations belonging to $\G^{[k]}$ factor through the
projection $X^{[k]}\rightarrow X^{[k]}_*$ and induce transformations
of $X^{[k]}_*$ preserving $\mu^{[k]}_*$. This defines a
measure-preserving action of the group $\G^{[k]}$ and of its
subgroup $\F^{[k]}$ on $X^{[k]}_*$. The measure $\mu^{[k]}_*$ is
ergodic for the action of $\G^{[k]}$.

On the other hand, all the transformations belonging to $\G^{[k]}$
factor through the projection ${\bf x}\mapsto {x_{\bf 0}}$ from
$X^{[k]}$ to $X$, and induce measure-preserving transformations of
$X$. The transformation $T^{[k]}$ induces the transformation $T$ on
$X$, and each transformation belonging to $\F^{[k]}$ induces the
trivial transformation on $X$. This defines a measure-preserving
ergodic action of the group $\G^{[k]}$ on $X$, with a trivial
restriction to the subgroup $\F^{[k]}$.

\subsubsection{A system of order $k$}
Let $ \J^{[k]}_*$ denote the $\sigma$-algebra of subsets of
$X^{[k]}_*$ which are invariant under the action of $\F^{[k]}$.
Since the $\sigma$-algebra $\J^{[k]}$ coincides with the
$\sigma$-algebra of sets depending only on the coordinate ${\bf 0}$
(\cite[Proposition 3.4]{HK05}). Hence there exists a
$\sigma$-algebra $\ZZ_{k-1}$ of $X$ such that $\ZZ_{k-1}$ is
isomorphic to $\J^{[k]}_*$. To be precise, for each $A\in
\J^{[k]}_*$, there is unique $B\in \ZZ_{k-1}$ such that $1_B(x_{\bf
0})=1_{A}({\bf x_*})$ for $\mu^{[k]}$-almost every ${\bf x}=(x_{\bf
0}, {\bf x_*})\in X^{[k]}$.

\begin{de}
The $\sigma$-algebra $\ZZ_{k}$ is invariant under $T$ and so defines
a factor of $(X,\mu, T)$ written $(Z_{k}(X), \mu_{k}, T)$, or simply
$(Z_{k}, \mu_{k}, T)$. The factor map $X \rightarrow Z_{k}$ is
written by $\pi_{k}$.

$(Z_{k}, \ZZ_k, \mu_k, T)$ is called a {\em system of order $k$}.
\end{de}

$(Z_k,\ZZ_k, \mu_k)$ has a very nice structure:

\begin{thm}\cite{HK05}
Let $(X,\X,\mu , T)$ be an ergodic system and $k\in \N$. Then the
system $(Z_k, \ZZ_k, \mu_k, T)$ is a (measure theoretic) inverse
limit of $k$-step nilsystems.
\end{thm}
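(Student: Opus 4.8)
The plan is to follow the structure theorem of Host and Kra, realizing $(Z_k,\mu_k,T)$ concretely as the factor associated to the seminorm $\|\cdot\|_{k+1}$ and then building its algebraic structure tower by tower. First I would record the fundamental fact already available in the excerpt: the factor $\mathcal{Z}_{k-1}\subset \X$ is defined via the invariant $\sigma$-algebra $\J_*^{[k]}$ on $X_*^{[k]}$, and by \cite[Proposition 3.4]{HK05} this coincides with dependence on the single coordinate ${\bf 0}$. Consequently $(Z_k,\mu_k,T)$ is characterized by the Gowers--Host--Kra seminorms: a function $f\in L^\infty(\mu)$ is measurable with respect to $\mathcal{Z}_k$ if and only if $\E(f\mid \mathcal{Z}_k)$ captures all of its $\|\cdot\|_{k+1}$-seminorm, equivalently $f\perp L^\infty(\mathcal{Z}_k)$ forces $\|f\|_{k+1}=0$. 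I would first establish this characteristic-factor property, since everything downstream rests on it.

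Next I would set up the inductive tower. The heart of the argument is to show that each successive extension $\pi:(Z_k,\mu_k,T)\to (Z_{k-1},\mu_{k-1},T)$ is an \emph{isometric} (more precisely, a compact abelian group, i.e.\ Conze--Lesigne--type) extension. The key step is to analyze the measure $\mu^{[k]}$ via the relatively independent joining construction given in the excerpt, $\mu^{[k+1]}=\mu^{[k]}\times_{\I^{[k]}}\mu^{[k]}$, together with the ergodic decomposition formulas and the identification $(X^{[k]})^{[l]}=X^{[k+l]}$ from \cite[Lemma 3.1]{HK05}. From these one extracts, for functions measurable with respect to $\mathcal{Z}_k$, functional equations of Conze--Lesigne type whose solutions force the fibers of $\pi$ to carry a homogeneous structure. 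I would then invoke the Host--Kra machinery identifying such extensions with translations on homogeneous spaces of nilpotent Lie groups: at each stage the extension is by rotation on a compact abelian group, and the cocycles satisfy the relevant parallelepiped equations, so that the tower $Z_0\leftarrow Z_1\leftarrow\cdots\leftarrow Z_k$ assembles into a $k$-step nilsystem (or, when the groups are not Lie, an inverse limit of such).

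Finally I would pass to the inverse limit. Since $(X,\mu,T)$ need not itself be a nilsystem, the compact abelian groups arising as fibers may fail to be Lie groups; one handles this by writing each such group as an inverse limit of Lie groups and correspondingly expressing $Z_k$ as a measure-theoretic inverse limit of genuine $k$-step nilsystems, using that $L^2(\mu_k)$ is generated by the union of the finite-dimensional pieces. \textbf{The main obstacle} I anticipate is precisely the passage from the abstract seminorm/characteristic-factor description to the concrete algebraic statement that the extensions are abelian group extensions satisfying the nil-cocycle equations: verifying the Conze--Lesigne functional equations and showing their solvability in the homogeneous setting is the technical core, and it is here that the full strength of \cite{HK05} (rather than the elementary facts recalled in this excerpt) must be used. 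I would treat this as the decisive lemma and organize the rest of the proof around it.
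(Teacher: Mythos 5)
This theorem is not proved in the paper at all: it is the Host--Kra structure theorem, quoted verbatim from \cite{HK05}, and the authors use it as a black box. So there is no in-paper argument to compare yours against; the only meaningful comparison is with the proof in \cite{HK05} itself.

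Measured against that, your outline identifies the right architecture: the characterization of $\mathcal{Z}_k$ by the seminorm $\|\cdot\|_{k+1}$ (equivalently by $\mu^{[k+1]}$ and the coordinate-${\bf 0}$ description of $\J^{[k+1]}$), the inductive claim that each $Z_k \to Z_{k-1}$ is an extension by a compact abelian group with a cocycle satisfying parallelepiped (type-$k$, Conze--Lesigne-style) equations, and the final reduction to the Lie case by writing the structure groups as inverse limits of Lie groups. But as written this is a roadmap, not a proof. The step you yourself flag as the main obstacle --- deducing from the cube/seminorm description that the extensions are abelian group extensions with cocycles of type $k$, and, much harder, that a tower of such extensions is an inverse limit of genuine $k$-step nilsystems --- is precisely the content of the bulk of \cite{HK05} (the construction of the nilpotent group from the type-$k$ cocycle data occupies several sections there). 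None of the facts recalled in the present paper (the inductive definition of $\mu^{[k]}$, the ergodic decomposition formulas, Proposition 3.4 of \cite{HK05}) come close to supplying that machinery. So the proposal is a faithful summary of the known proof with its decisive lemma deferred to the reference; in effect it does what the paper does, namely cite \cite{HK05}, and should not be regarded as an independent proof.
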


\begin{rem}
In this section we follows from the treatment of Host and Kra.
Ziegler has a different approach, see \cite{Z}. For more details
about the difference between these two methods, see Leibman's notes
in the appendix in \cite{Bergelson06}.
\end{rem}

\subsubsection{Properties about $Z_k$} The following properties may
be useful in the next section.

\begin{thm}\cite{HK05, HK09}\label{HK-Zk}
Let $k\ge 2$ is an integer and $(X =G/ \Gamma, \mu, T )$ be an
ergodic $(k-1)$-step nilsystem.
\begin{enumerate}
  \item The measure $\mu^{[k]}$ is the Haar measure of a sub-nilmanifold
  $X_k=\Q^{[k]}$ of $X^{[k]}$. $(\Q^{[k]}, \mu^{[k]}, \G^{[k]})$ is
  strictly ergodic.

  \item Let $X_{k*}$ be the image of $X_k$ under the projection ${\bf x}\mapsto
  {\bf x_*}$ from $X^{[k]}$ to $X^{[k]}_*=X^{2^k-1}$.
  There exists a smooth map $\Phi: X_{k*}\rightarrow X_k$ such
  that $$X_k=\{(\Phi({\bf x_*}),{\bf x_*}): {\bf x}\in X_{k*}\}.$$

  \item For every $x\in X$, let
  $W_{k,x} = \{{\bf x}\in X_k: x_{\bf 0} = x\}$. Then
  $W_{k,x}=\overline{\F^{[k]}}(x^{[k]})$ and it is
  uniquely ergodic under $\F^{[k]}$.

  \item For every $x\in  X$, let $\rho_{k,x}$ be the invariant measure of $W_{k,x}$.
  Then for every $x\in X$ and $g \in G$, $\rho_{k,gx}$
  is the image of $\rho_{k,x}$ under the translation by $g^{[k]} = (g, g,\ldots,g)$.
\end{enumerate}
\end{thm}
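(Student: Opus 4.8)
The plan is to reduce the whole statement to the structure of the Host--Kra cube group attached to the nilpotent Lie group $G$, and then to invoke the classical principle that an ergodic nilsystem is automatically minimal and uniquely ergodic. For part (1), I would first introduce the cube group $G^{[k]}\le G^{2^k}$ generated by the diagonal $\Delta G=\{(g,\ldots,g):g\in G\}$ together with the ``edge'' subgroups realizing the face transformations; the point is that $G^{[k]}$ is a closed nilpotent subgroup, so that $\Q^{[k]}(X)$, being the closed $\G^{[k]}$-orbit of $x_0^{[k]}$ (as recorded in the excerpt for transitive $x$), is exactly the homogeneous space $G^{[k]}/\Gamma^{[k]}$ with $\Gamma^{[k]}=G^{[k]}\cap\Gamma^{2^k}$, a sub-nilmanifold of $X^{[k]}$. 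Next I would identify $\mu^{[k]}$ with its Haar measure, arguing by induction on $k$ through the relatively independent joining formula defining $\mu^{[k+1]}=\mu^{[k]}\times_{\I^{[k]}}\mu^{[k]}$: the conditional measures over $\I^{[k]}$ are Haar on sub-nilmanifolds because $\I^{[k]}$ is itself a nilfactor. Once $\mu^{[k]}$ is Haar, strict ergodicity of $(\Q^{[k]},\mu^{[k]},\G^{[k]})$ follows from the ergodicity of $\mu^{[k]}$ under $\G^{[k]}$ (the Proposition cited earlier) together with the fact that an ergodic translation action on a nilmanifold is minimal and uniquely ergodic.

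For the graph property in part (2), I would study the projection $p_*:\Q^{[k]}\to X^{[k]}_*$ that forgets the ${\bf 0}$-corner. Since both source and target are nilmanifolds and $p_*$ is a smooth surjective homogeneous-space map, it suffices to prove that $p_*$ is injective, i.e. that an element of $G^{[k]}$ fixing all starred coordinates also fixes the ${\bf 0}$-coordinate. This is precisely where the $(k-1)$-step hypothesis enters: the commutator relations among the corner subgroups of $G^{[k]}$ force the component over $x_{\bf 0}$ to be a (polynomial, hence smooth) function of the remaining $2^k-1$ components. Injectivity of $p_*$ then provides a smooth inverse, and setting $\Phi$ equal to the first coordinate composed with $p_*^{-1}$ yields $X_k=\{(\Phi({\bf x_*}),{\bf x_*}):{\bf x}\in X_{k*}\}$.

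For parts (3) and (4) I would exploit that $\F^{[k]}=\{S\in\G^{[k]}:S_\emptyset=\id\}$ fixes the ${\bf 0}$-coordinate, so $\overline{\F^{[k]}}(x^{[k]})\subseteq W_{k,x}$ immediately. For the reverse inclusion and unique ergodicity, I would transport $W_{k,x}$ through the graph of part (2) to $X_{k*}$ and observe that the $\F^{[k]}$-action corresponds to the translation action of the fiber group $G_*^{[k]}=\{g\in G^{[k]}:g_\emptyset=e\}$; this is again a nilsystem, so its ergodicity gives minimality, whence $W_{k,x}=\overline{\F^{[k]}}(x^{[k]})$, and unique ergodicity, producing the invariant measure $\rho_{k,x}$. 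For part (4), I would note that $g^{[k]}=(g,\ldots,g)\in\Delta G\subseteq G^{[k]}$ normalizes $G_*^{[k]}$ and carries $x^{[k]}$ to $(gx)^{[k]}$, so $g^{[k]}\,W_{k,x}=W_{k,gx}$ equivariantly for the (normalized) $\F^{[k]}$-action; pushing the unique invariant measure forward then forces $(g^{[k]})_*\rho_{k,x}=\rho_{k,gx}$.

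The main obstacle is part (2): establishing injectivity of $p_*$, equivalently that the ${\bf 0}$-corner of a parallelepiped is a smooth function of the other corners. This is the genuinely nilpotent ingredient and the step where the degree $k-1$ is indispensable, since it is exactly the nilpotency of $G^{[k]}$ that collapses the constraints among corners down to a determinacy relation for the ${\bf 0}$-coordinate; the remaining pieces are then formal consequences of the homogeneous-space picture and the ergodic-implies-uniquely-ergodic dichotomy for nilsystems.
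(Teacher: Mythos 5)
This theorem is imported by the paper from Host--Kra with the citation \cite{HK05, HK09}; the paper gives no proof of it, so there is no internal argument to compare yours against. Your sketch does reconstruct the route taken in the cited sources: realize $\Q^{[k]}$ as the orbit of $x^{[k]}$ under the Host--Kra cube group $G^{[k]}\le G^{2^k}$, identify $\mu^{[k]}$ with the Haar measure of that sub-nilmanifold, and then invoke the principle that an ergodic translation action on a nilmanifold is minimal and uniquely ergodic; and the corner determinacy in (2) is, as you say, exactly where $(k-1)$-step nilpotency enters, since $G_k=\{e\}$ forces any element of the cube group supported on the single vertex ${\bf 0}$ to be trivial.

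Two steps in your outline carry essentially all of the content and are asserted rather than argued. First, the identification of the \emph{topological} orbit closure $\Q^{[k]}$ --- the closure of the orbit of $x^{[k]}$ under the countable abelian group $\G^{[k]}$ generated by the particular translations $T^{[k]}$ and $T^{[k]}_j$ --- with the orbit of $x^{[k]}$ under the full Lie group $G^{[k]}$: the inclusion of the latter in the former is the hard half, proved in \cite{HK05} by an inductive argument intertwined with the measure-theoretic construction of $\mu^{[k]}$, not by a formal homogeneous-space observation. Second, before you may apply ``ergodic implies uniquely ergodic'' to the fibers, you must establish that the $\F^{[k]}$-action on $(W_{k,x},\rho_{k,x})$ is ergodic for \emph{every} $x$, not merely almost every $x$; in the source this is extracted from the structure of $\mu^{[k]}$ as an iterated relatively independent joining together with the translation equivariance you record in (4), and it does not follow from the nilmanifold picture alone. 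With those two points supplied, your outline matches the argument of \cite{HK05, HK09}.
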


We need the following result replacing $d$-step nilmanifold with $d$-step nilsystem.

\begin{thm}
Let $k\ge 2$ is an integer and $(X,T,\mu)$ is an
ergodic $(k-1)$-step nilsystem.
\begin{enumerate}
  \item The measure $\mu^{[k]}$ is an invariant measure of
  $\Q^{[k]}$. $(\Q^{[k]}, \mu^{[k]}, \G^{[k]})$ is
  strictly ergodic.

  \item For every $x\in X$, let
  $W_{k,x} = \{{\bf x}\in \Q^{[k]}: x_{\bf 0} = x\}$. Then
  $W_{k,x}=\overline{\F^{[k]}}(x^{[k]})$ and it is
  uniquely ergodic under $\F^{[k]}$.

  \item For every $x\in  X$, let $\rho_{k,x}$ be the invariant measure of $W_{k,x}$.
  Then for every $x\in X$, $\rho_{k,Tx}$
  is the image of $\rho_{k,x}$ under the translation by $T^{[k]} = (T, T,\ldots,T)$.
\end{enumerate}
\end{thm}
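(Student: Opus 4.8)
The plan is to reduce the statement to the case of a \emph{connected} nilmanifold, where Theorem \ref{HK-Zk} already applies, and then to transfer each conclusion. First I would record the classical fact that an ergodic nilsystem is minimal and uniquely ergodic, so that $(X,T)$ is strictly ergodic; by the Shao--Ye theorem \cite{SY} this already gives that $(\Q^{[k]},\G^{[k]})$ is minimal and that $(\overline{\F^{[k]}}(x^{[k]}),\F^{[k]})$ is minimal for every $x$. Thus the only serious point in (1) and (2) is \emph{unique} ergodicity; the $\G^{[k]}$-invariance of $\mu^{[k]}$ and the fact that it is carried by $\Q^{[k]}$ are standard consequences of the inductive joining construction (and of the ergodicity of $(X^{[k]},\mu^{[k]})$ under $\G^{[k]}$ recorded above).

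To handle unique ergodicity I would write $X=G/\Gamma$ and pass to the subgroup of $G$ generated by the identity component $G^\circ$ together with the element defining $T$. The connected components of $X$ are then the cosets of $X^\circ=G^\circ\Gamma/\Gamma$; since the system is ergodic, $T$ permutes them in a single cycle, say $X=\bigsqcup_{i=0}^{s-1}T^iX^\circ$, and $(X^\circ,T^s)$ is an ergodic nilsystem on the \emph{connected} $(k-1)$-step nilmanifold $X^\circ$, to which Theorem \ref{HK-Zk} applies. The component index is a factor map $\pi\colon X\to \Z/s\Z$ onto a cyclic rotation, and functoriality of the cube construction gives an equivariant $\pi^{[k]}\colon \Q^{[k]}(X)\to \Q^{[k]}(\Z/s\Z)$. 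Because $\Q^{[k]}(\Z/s\Z)$ is a finite $\G^{[k]}(\Z/s\Z)$-homogeneous space (minimal by \cite{SY}), it carries a unique invariant, uniform, measure, so every $\G^{[k]}$-invariant measure $\nu$ on $\Q^{[k]}(X)$ projects onto it. Disintegrating over this finite base, each fibre is, after identifying the components $T^iX^\circ$ with $X^\circ$ by powers of $T$, a cube structure for the connected nilmanifold $X^\circ$; applying the unique ergodicity part of Theorem \ref{HK-Zk} to $X^\circ$ forces the fibrewise measures of $\nu$ to coincide with those coming from $\mu^{[k]}$. This yields $\nu=\mu^{[k]}$, proving strict ergodicity in (1), and the same fibre analysis applied to the slice $\{x_{\bf 0}=x\}$ gives both the identification $W_{k,x}=\overline{\F^{[k]}}(x^{[k]})$ and its unique ergodicity under $\F^{[k]}$ in (2).

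Conclusion (3) should then be formal, given (2). Since $\F^{[k]}$ is normal in $\G^{[k]}$ and $T^{[k]}\in\G^{[k]}$, the homeomorphism $T^{[k]}$ of $\Q^{[k]}$ conjugates the $\F^{[k]}$-action to itself, and it carries $W_{k,x}=\{{\bf x}\in\Q^{[k]}:x_{\bf 0}=x\}$ onto $W_{k,Tx}$ because it applies $T$ to every coordinate, in particular to the ${\bf 0}$-coordinate. Hence $T^{[k]}$ pushes the unique $\F^{[k]}$-invariant measure $\rho_{k,x}$ on $W_{k,x}$ to the unique one on $W_{k,Tx}$, i.e. $\rho_{k,Tx}=(T^{[k]})_*\rho_{k,x}$; this is exactly the special case of conclusion (4) of Theorem \ref{HK-Zk} for the element defining $T$, now obtained without reference to the ambient group $G$.

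I expect the main obstacle to be the disintegration step in the second paragraph: making precise that the fibres of $\pi^{[k]}$ over each admissible component pattern really are cube structures of the connected nilmanifold $X^\circ$ to which Theorem \ref{HK-Zk} can be applied, and that the stabiliser of such a pattern inside $\G^{[k]}$ (resp. $\F^{[k]}$) acts there as the full connected parallelepiped (resp. face) group. This is precisely the point where the weakening from ``Haar measure of a sub-nilmanifold'' in Theorem \ref{HK-Zk}(1) to merely ``an invariant measure'' is forced: for disconnected $G$ the measure $\mu^{[k]}$ need not be the Haar measure of a single sub-nilmanifold, but only a uniform average of Haar-type measures over the finitely many component patterns.
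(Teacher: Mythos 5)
Your proposal reads the hypothesis ``ergodic $(k-1)$-step nilsystem'' literally as a translation on a possibly disconnected nilmanifold $G/\Gamma$, and it builds the whole argument around reducing the disconnected case to the connected one via the cycle of connected components. That is not what this theorem is for, and it is not how the paper proves it. The paper's proof is a single line: by \cite{HK05} the system is a (measure-theoretic) inverse limit of nilsystems $X_j=G_j/\Gamma_j$, Theorem \ref{HK-Zk} applies to each $X_j$, and all three conclusions pass to the inverse limit (unique ergodicity, the identification $W_{k,x}=\overline{\F^{[k]}}(x^{[k]})$, and the equivariance of $x\mapsto\rho_{k,x}$ are all preserved under inverse limits of topological systems, since $\Q^{[k]}(\varprojlim X_j)=\varprojlim\Q^{[k]}(X_j)$ and an invariant measure projects to an invariant measure on each stage). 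The point of restating Theorem \ref{HK-Zk} with ``an invariant measure'' in place of ``the Haar measure of a sub-nilmanifold'' is precisely that an inverse limit carries no ambient Lie group or smooth structure; this is also why conclusion (4) of Theorem \ref{HK-Zk} (translation by an arbitrary $g\in G$) is specialized to $T$ alone. Your proposal attributes that weakening to disconnectedness of $G$ instead, which is a misdiagnosis of what is being generalized.

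This matters because of how the theorem is used: in Sections 4 and 5 it is applied to the factors $Z_d$, which by the Host--Kra structure theorem quoted in Section 2 are only inverse limits of $d$-step nilsystems and need not be nilmanifolds at all. Your component-cycle reduction, even if completed (and the fibrewise identification of $\Q^{[k]}(X)\cap(X^\circ)^{[k]}$ with $\Q^{[k]}(X^\circ,T^s)$, which you flag as the main obstacle, does go through because the components are clopen), proves nothing about those systems, so the statement you would establish cannot play the role the theorem plays in the paper. Your paragraph deducing (3) from (2) via $T^{[k]}W_{k,x}=W_{k,Tx}$ and uniqueness of the invariant measure is correct and is the standard argument, and it survives in the inverse-limit setting; but the core of your proof needs to be replaced by (or at least supplemented with) the passage to the inverse limit.
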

\begin{proof} By \cite{HK05} $(X,T, \mu)$ is an inverse limit of $(X_j =G_j/ \Gamma_j, \mu_j, T)$
of $d$-step nilsystems. Then the result follows.
\end{proof}

\section{Deducing Theorems C and D from Theorems A and B}\label{sec-model}

In this section we show how we obtain Theorem C (resp. D) from Theorem A (resp. B). The proof of Theorem A
will be carried out in the next section and the proof of Theorem B will be presented in Section 5.
Moreover, we will use Furstenberg-Weiss' almost one-to-one Theorem
to get a $d$-step almost automorphic model.

\subsection{The proof of Theorem D assuming Theorem B}
To simplify some statements, we introduce the following definition. Recall that $\tau_d=T\times
\ldots\times T \ (d \ \text{times})$, $\sigma_d =T\times \ldots \times T^{d}$
and $\langle\tau_d, \sigma_d\rangle$ is the group generated by
$\tau_d$ and $\sigma_d$. Moreover,
$N_d(\h{X})=\overline{\O(\Delta_d(\h{X}),
\sigma_d)}=\overline{\O((x,\ldots,x), \langle\tau_d, \sigma_d\rangle)}$ when $(\h{X},T)$ is minimal.

\begin{de}
Let $(X,\X, \mu, T)$ be an ergodic m.p.t. and $(\h{X}, T)$
be its model.
\begin{enumerate}
\item For $d\in \N$, $(\h{X}, T)$ is called an {\em
$\F^{[d]}$-strictly ergodic model} for $(X,\X, \mu, T)$ if $(\h{X},
T)$ is a strictly ergodic model and
$(\overline{\F^{[d]}}(x^{[d]}),\F^{[d]})$ is strictly ergodic for
all $x\in \hat{X}$.

\item For $d\in \N$, $(\h{X}, T)$ is called a {\em $\G^{[d]}$-strictly
ergodic model} for $(X,\X, \mu, T)$ if $(\h{X}, T)$ is a strictly
ergodic model and $(\Q^{[d]},\G^{[d]})$ is strictly ergodic.

\item For $d\in \N$, $(\h{X}, T)$ is called a
{\em $\langle\tau_d, \sigma_d\rangle-$strictly ergodic model} for $(X,\X, \mu,
T)$ if $(\h{X}, T)$ is a strictly ergodic model and $(N_d(\h{X}),
\langle\tau_d, \sigma_d\rangle)$ is strictly ergodic.
\end{enumerate}
\end{de}

To obtain the connection between Theorems A (resp. B) and C (resp. D), we need the following formula which is easy to be verified.

\begin{lem}\label{lem-product}
Let $\{a_i\}, \{b_i\}\subseteq \C$. Then
\begin{equation}\label{}
\prod_{i=1}^k a_i-\prod_{i=1}^k b_i=(a_1-b_1)b_2\ldots b_k+
a_1(a_2-b_2)b_3\ldots b_k +a_1\ldots a_{k-1}(a_k-b_k).
\end{equation}
\end{lem}

\medskip
We will show Theorem D can be deduced from Theorem B. The proofs of Theorem C assuming Theorem A follows similarly.

\noindent{\bf The proof of Theorem D assuming Theorem B:}
Since $(X,\X, \mu, T)$ has a $\langle \tau_d,\sigma_d\rangle-$strictly ergodic model, we
may assume that $(X,T)$ itself is a minimal t.d.s. and
$\mu$ is its unique measure such that $(N_d(X), \langle\tau_d, \sigma_d
\rangle)$ is uniquely ergodic with the unique measure $\ll_{\tau,\sigma;d}$ defined in (\ref{limit-D}).

Let $\delta>0$. Without loss of generality, we assume that for all $1\le j\le d$,
$\|f_j\|_\infty\le 1$. Choose continuous functions $g_j$ such that
$\|g_j\|_\infty\le 1$ and $\|f_j-g_j\|_1<\d/d$ for all $1\le j\le
d$. We have
\begin{equation}\label{yexd1}
\begin{split}
&  \left | \frac{1}{N^2} \sum_{n\in [0,N-1] \atop{m\in [0,N-1]}} \prod_{j=1}^{d} f_j(T^{n+(j-1)m}x) -
\int_{N(X)}\otimes_{j=1}^df_jd\ll_{\tau,\sigma;d} \right |\\
& \le  \left | \frac{1}{N^2} \sum_{n\in [0,N-1] \atop{m\in [0,N-1]}} \prod_{j=1}^{d} f_j(T^{n+(j-1)m}x) -
\frac{1}{N^2} \sum_{n\in [0,N-1] \atop{m\in [0,N-1]}}\prod_{j=1}^d g_j(T^{n+(j-1)m}x)\right |\\
&+\left | \frac{1}{N^2} \sum_{n\in [0,N-1] \atop{m\in [0,N-1]}}\prod_{j=1}^d g_j(T^{n+(j-1)m}x)-
\int_{N(X)}\otimes_{j=1}^dg_jd\ll_{\tau,\sigma;d} \right |\\
&+\left | \int_{N(X)}\otimes_{j=1}^dg_jd\ll_{\tau,\sigma;d}-
\int_{N(X)}\otimes_{j=1}^df_jd\ll_{\tau,\sigma;d} \right |.
\end{split}
\end{equation}

Now by Pointwise Ergodic Theorem for $\Z^2$ i.e. Theorem
\ref{Lindenstrauss} (applying to $(n,m)\mapsto T^{n+(j-1)m}$) we have that for all $1\le j\le d$
\begin{equation*}
    \frac {1}{N^2} \sum_{n\in [0,N-1] \atop{m\in
[0,N-1]}}\Big | f_j(T^{n+(j-1)m}x)-g_j(T^{n+(j-1)m}x)\Big | \lra
\|f_j-g_j\|_1,\ a.e. \quad N\to \infty.
\end{equation*}
Hence by Lemma \ref{lem-product}, when $N$ is large
\begin{equation}\label{yexd2}
\begin{split}
&   \left | \frac{1}{N^2} \sum_{n\in [0,N-1] \atop{m\in [0,N-1]}}
   \prod_{j=1}^{d} f_j(T^{n+(j-1)m}x) -
\frac{1}{N^2} \sum_{n\in [0,N-1] \atop{m\in [0,N-1]}}
   \prod_{j=1}^d g_j(T^{n+(j-1)m}x)\right |\\
& \le \sum_{j=1}^d \Big[\frac {1}{N^2} \sum_{n\in [0,N-1]
\atop{m\in
[0,N-1]}}\Big | f_j(T^{n+(j-1)m}x)-g_j(T^{n+(j-1)m}x)\Big |\Big ]\\
& <2\sum_{j=1}^d \|f_j-g_j\|_1\le 2\d,\ a.e.
\end{split}
\end{equation}

Note that
\begin{equation*}
\begin{split}
&\frac{1}{N^2} \sum_{n\in [0,N-1] \atop{m\in [0,N-1]}}
   \prod_{j=1}^d g_j(T^{n+(j-1)m}x)\\
   =& \frac{1}{N^2} \sum_{n\in [0,N-1] \atop{m\in [0,N-1]}}
    g_1(T^nx)g_2(T^{n+m}x)\ldots g_d(T^{n+(d-1)m}x) \\ =&
    \frac{1}{N^2} \sum_{n\in [0,N-1] \atop{m\in [0,N-1]}}
    g_1\otimes \ldots \otimes g_d \Big( \tau_d^n\sigma_d^m(x,x,\ldots,x)\Big).
\end{split}
\end{equation*}

Since $g_1\otimes \ldots \otimes g_d: X^{d}\rightarrow \R$ is
continuous and $(N_d(X), \langle \tau_d, \sigma_d \rangle)$ is uniquely ergodic,
by Theorem \ref{unique-ergodic}, $\displaystyle \frac{1}{N^2}
\sum_{n\in [0,N-1] \atop{m\in [0,N-1]}}
   \prod_{j=1}^d g_j(T^{n+(j-1)m}x)$ converges pointwisely to $\int_{N(X)}\otimes_{j=1}^dg_jd\ll_{\tau,\sigma;d} $.
So when $N$ is large
\begin{equation}\label{xdye3}
\left | \frac{1}{N^2} \sum_{n\in [0,N-1] \atop{m\in [0,N-1]}}\prod_{j=1}^d g_j(T^{n+(j-1)m}x)-
\int_{N(X)}\otimes_{j=1}^dg_jd\ll_{\tau,\sigma;d} \right |\le \d.
\end{equation}

By Lemma \ref{lem-product},
\begin{equation}\label{yexd4}
\begin{split}
& \left | \int_{N(X)}\otimes_{j=1}^dg_jd\ll_{\tau,\sigma;d}-
\int_{N(X)}\otimes_{j=1}^df_jd\ll_{\tau,\sigma;d} \right |\\
&\le \sum_{j=1}^d\int_{N(X)}|g_j-f_j|d\ll_{\tau,\sigma;d} \le \d.
\end{split}
\end{equation}

So combining (\ref{yexd1})-(\ref{yexd4}), when $N$ is large, we have
$$ \left | \frac{1}{N^2} \sum_{n\in [0,N-1] \atop{m\in [0,N-1]}} \prod_{j=1}^{d} f_j(T^{n+(j-1)m}x) -
\int_{N(X)}\otimes_{j=1}^df_jd\ll_{\tau,\sigma;d} \right |\le 4\d, a.e. $$

This clearly implies that

\begin{equation*}
    \frac{1}{N^2} \sum_{n\in [0,N-1] \atop{m\in [0,N-1]}}
    f_1(T^nx)f_2(T^{n+m}x)\ldots f_d(T^{n+(d-1)m}x)
\end{equation*}
converge $\mu$ a.e.. The proof is completed.


\medskip



\begin{rem}
It is easy to see that if  (\ref{C1}) holds for all $d$, then
we have (\ref{C2}) holds for all $d$. That is, (\ref{C1}) is more
fundamental. For example, if we want to get $\G^{[1]}$-case:
\begin{equation*}
    \frac{1}{N^2}\sum_{0\le n_1,n_2\le N-1}
    f_{0}(T^{n_1}x)f_{1}(T^{n_1+n_2}x),
\end{equation*}
then what need do is in the $\F^{[2]}$-case
\begin{equation*}
    \frac{1}{N^2}\sum_{0\le n_1,n_2\le N-1} f_{01}(T^{n_1}x)f_{10}(T^{n_2}x)f_{11}(T^{n_1+n_2}x)
\end{equation*}
by setting $f_{00}=f_0, f_{10}=1$ and $f_{11}=f_1$.
\end{rem}

\subsection{$d$-step almost automorphic systems}
$d$-step almost automorphic systems were defined and studied in \cite{HSY} which
are the generalization of Veech's almost automorphic systems.
\begin{de}
Let $(X,T)$ be a minimal t.d.s. and $d\in \N$.
$(X,T)$ is called a {\em $d$-step almost automorphic} system if it
is an almost one-to-one extension of a $d$-step nilsystem.
\end{de}

See \cite{HSY} for more discussion about $d$-step almost automorphy. In this
subsection we will show that in Theorem A we can also require the models are $d$-step almost automorphic systems.
To do so, first we state Furstenberg-Weiss's almost one-to-one Theorem.


\begin{thm}[Furstenberg-Weiss]\cite{FW89}\label{Furstenberg-Weiss}
Let $(Y, T)$ be a non-periodic minimal t.d.s., and let
$\pi' : X' \rightarrow Y$ be an extension of $(Y, T)$ with $(X', T)$
topologically transitive and $X'$ a compact metric space.
\[
\begin{CD}
X' @>{\theta}>> X \\
@V{\pi'}VV    @VV{\pi}V\\
Y @>{ }>> Y
\end{CD}
\]
Then there exists an almost 1-1 minimal extension
$\pi:(X,T)\rightarrow (Y, T)$, a Borel subset $X_0'\subseteq X'$ and
a Borel measurable map $\theta: X_0'\rightarrow X$ satisfying:
\begin{enumerate}
  \item $\theta \circ T=T\circ\theta$;
  \item $\pi\circ \theta=\pi'$;
  \item $\theta$ is a Borel isomorphism of $X_0'$ onto its image
  $X_0=\theta(X_0')\subseteq X$;
  \item $\mu(X_0')=1$ for any $T$-invariant measure $\mu$ on $X'$.
  \item if $(X', T)$ is uniquely ergodic, then $(X,T)$ can be chosen
  to be uniquely (hence strictly) ergodic.
\end{enumerate}
\end{thm}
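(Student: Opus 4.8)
The plan is to manufacture the minimal almost one-to-one extension $(X,T)$ by coding the transitive orbit of $X'$ against a refining sequence of topological Kakutani--Rokhlin towers over the aperiodic base $(Y,T)$, and to take $\theta$ to be the resulting decoding map, defined on those points whose base orbit avoids every tower boundary. Before describing this I would record why the obvious shortcuts fail, since this explains the real content of the statement. One cannot simply pass to a minimal subsystem $M\subseteq X'$: although $M$ does project onto $Y$ (the projection is a nonempty closed invariant subset of the minimal $Y$), an invariant measure of $X'$ need not be supported on $M$, so the full-measure requirement~(4) would be destroyed. Nor can one take the closure in $Y\times X'$ of the graph of the transitive orbit, since that merely recovers the graph of $\pi'$ and hence $X'$ itself. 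Thus a genuinely new topological model $X$ must be built that is at once minimal, almost one-to-one over $Y$, and Borel isomorphic to $X'$ on a set of full measure for every invariant measure.

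For the construction, fix a transitive point $x^{*}\in X'$, set $y^{*}=\pi'(x^{*})$, and fix a metric $\rho$ on $X'$; since $Y$ is minimal the orbit $\O(y^{*},T)$ is dense. Because $(Y,T)$ is minimal and non-periodic it admits, for each $k$, a topological Rokhlin tower with base $B_{k}$ and all heights at least $k$; I would choose these towers refining, with atom diameters tending to $0$ so that the induced partitions generate the topology of $Y$, and with boundary sets $E_{k}$ that are closed, nowhere dense, and satisfy $\mu(E_{k})=0$ for every $\mu\in M_{T}(Y)$. Reading the itinerary of a point $y$ through these partitions gives an \emph{address}, and along each column I would attach an $X'$-value drawn from the transitive orbit $\{T^{n}x^{*}\}$ over the corresponding base point. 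This yields, for each $k$, a continuous partial section $s_{k}$ defined off $\bigcup_{n}T^{n}E_{k}$, commuting with $T$ where defined, chosen so that $\rho(s_{k+1},s_{k})<1/k$ on the common domain. I would then let $X$ be the orbit closure of the coded transitive point in the compact space carrying the $Y$-coordinate together with all tower addresses, and let $\pi:X\to Y$ read off the $Y$-coordinate.

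The conclusions then fall out as follows. Minimality of $X$ reduces to showing that the coded transitive point is almost periodic, which follows from the syndetic recurrence of its itinerary through the uniformly recurrent tower structure. For $y$ in the residual set $Y_{0}=Y\setminus\bigcup_{k,n}T^{n}E_{k}$, whose complement is meager and null for every invariant measure, the addresses determine a unique compatible point, the values $s_{k}(y)$ converge in $X'$, and $\pi^{-1}(y)$ is a singleton; hence $\pi$ is almost one-to-one. Setting $X_{0}'=(\pi')^{-1}(Y_{0})$, which has full measure for every invariant $\mu$ on $X'$ because $\pi'_{*}\mu$ is invariant and kills $Y\setminus Y_{0}$, the decoding $\theta(x)=\lim_{k}s_{k}(\pi'(x))$ (read as a point of $X$) is a Borel isomorphism of $X_{0}'$ onto $X_{0}=\theta(X_{0}')$ satisfying (1)--(4). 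For~(5), any invariant measure $\nu$ on $X$ satisfies $\pi_{*}\nu(Y\setminus Y_{0})=0$, so $\nu(X_{0})=1$ and $\theta^{-1}$ transports $\nu$ to an invariant measure on $X'$, a correspondence that is bijective; unique ergodicity of $X'$ thus forces unique ergodicity of $X$. I expect the main obstacle to be the simultaneous control demanded by the tower construction: one must arrange the towers and the partial sections $s_{k}$ so that the boundary sets remain meager and universally null (needed for almost one-to-oneness and for~(4)), while the coded point stays almost periodic (needed for minimality of $X$) and the sections converge off the boundaries (needed for $\theta^{-1}$ to be well defined). Reconciling these competing requirements is the delicate combinatorial heart of the Furstenberg--Weiss argument.
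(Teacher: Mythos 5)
The paper offers no proof of this statement: it is imported verbatim from \cite{FW89}, with item (5) read off from the proof given there, as observed in \cite{GW94}. So the only question is whether your reconstruction is sound, and it is not: it contains a genuine inconsistency exactly at the point where topology and measure interact, which is the real content of the theorem. You arrange (i) that the tower boundaries are universally null, so that $Y_0=Y\setminus\bigcup_{k,n}T^nE_k$ has full measure for every invariant measure on $Y$; (ii) that $\pi^{-1}(y)$ is a singleton for every $y\in Y_0$; and (iii) that $\theta$ is an equivariant Borel injection on $X_0'=(\pi')^{-1}(Y_0)$ with $\pi\circ\theta=\pi'$. These cannot coexist. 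On $X_0'$ the value $\theta(x)$ lies in the singleton $\pi^{-1}(\pi'(x))$, hence is a function of $\pi'(x)$ alone (your explicit formula $\theta(x)=\lim_k s_k(\pi'(x))$ makes this visible), so injectivity of $\theta$ forces $\pi'$ to be injective on a set of full measure for every invariant measure of $X'$, i.e.\ $(X',\mu)$ would be measure-theoretically isomorphic to $(Y,\pi'_*\mu)$ for every invariant $\mu$. That is false in general, and in particular in the application made in this paper, where $X'$ is a strictly ergodic model of an ergodic system and $Y=Z_d$ is its nilfactor. (If instead you retreat from (ii) on the grounds that the attached labels are extra data not determined by the address of $y$, then your argument for almost one-to-one-ness of $\pi$ evaporates; either way a required property is lost.)

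The resolution in \cite{FW89} is that the two ``small'' sets must be kept apart. The residual set of $y$ with $|\pi^{-1}(y)|=1$ is in general \emph{null} for the transported measures, while the universally-full-measure set $X_0'$ on which $\theta$ is defined is cut out by conditions in $X'$ itself (avoidance of the boundaries of a refining, point-separating sequence of partitions of $X'$ whose boundaries are null for every invariant measure of $X'$), and $\theta(x)$ records the itinerary of the whole $X'$-orbit of $x$ through those partitions, not merely data attached to the base point $\pi'(x)$. Two further steps are asserted rather than proved: that the orbit closure of the coded point is minimal (``syndetic recurrence of the itinerary'' is not an argument; almost periodicity of the coded point has to be built into the inductive choice of towers and labels), and that the limit $\lim_k s_k$ exists off the exceptional set. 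As written, the proposal cannot deliver properties (3) and (4) simultaneously with almost one-to-one-ness, so it does not establish the theorem.
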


\begin{rem}
In \cite[Theorem 1]{FW89}, (1)-(4) are stated. From the proof of the
theorem given in \cite{FW89}, we have (5), which is pointed out in
\cite{GW94}.
\end{rem}



Let $(X,\X, \mu, T)$ be an ergodic system with non-trivial
nil-factors (non-triviality here means infinity) and $d\in \N$. Let
$\pi_d: X\rightarrow Z_d$ be the factor map from $X$ to its $d$-step
nilfactor $Z_d$. By definition, $Z_d$ may be regarded as a
t.d.s. in the natural way. By Weiss's  theorem \cite{Weiss85},
there is a uniquely ergodic model $(\h{X'}, \h{\X'}, \h{\mu}, T)$
for $(X,\X, \mu, T)$ and a factor map $\h{\pi_d'}: \h{X'}\rightarrow
Z_d$ which is a model for $\pi_d: X\rightarrow Z_d$.
\[
\begin{CD}
X @>{\phi}>> \h{X'} @>{\theta}>> \h{X}\\
@V{\pi_d}VV   @VV{\h{\pi_d'}}V   @VV{\h{\pi_d}}V\\
Z_d @>{ }>> Z_d @>{ }>> Z_d
\end{CD}
\]
Now by  Theorem \ref{Furstenberg-Weiss},
$\h{\pi_d'}: \h{X'}\rightarrow Z_d$ may be replaced by $\h{\pi_d}:
\h{X}\rightarrow Z_d$, where $\h{\pi_d}$ is almost 1-1 and $\h{X'}$
and $\h{X}$ are measure theoretically isomorphic. In particular,
$(\h{X},T)$ is a strictly ergodic model for $(X,\X,\mu, T)$.

As we described in the introduction, one once we have a model $\h{\pi}:\h{X}\lra Z_d$ then
it is  $\F^{[d]}$ and $\G^{[d]}$ models. Hence combining above discussion with Theorem A, we have
\begin{thm}\label{AA-model}
Let $d\in \N$. Then every ergodic m.p.t. with a non-trivial
$d$-step nilfactor has an $\F^{[d]}$ and $\G^{[d]}$ strictly ergodic
model $(X,T)$ which is a $d$-step almost automorphic system.
\end{thm}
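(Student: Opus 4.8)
The plan is to assemble the theorem from three ingredients that have already been prepared: Theorem A (which gives an $\F^{[d]}$ and $\G^{[d]}$ strictly ergodic model whenever we have a model factoring onto $Z_d$), Weiss's Theorem (which produces a uniquely ergodic model sitting over the topological nilfactor $Z_d$), and the Furstenberg--Weiss almost one-to-one Theorem \ref{Furstenberg-Weiss} (which upgrades the extension over $Z_d$ to an almost $1$-$1$ extension without destroying unique ergodicity). The key observation making the theorem possible is the remark, already recorded in the discussion preceding the statement, that \emph{once} we have a strictly ergodic model $\h{\pi_d}\colon \h{X}\to Z_d$ factoring onto the nilfactor, that same model is automatically an $\F^{[d]}$ and $\G^{[d]}$ strictly ergodic model. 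So the real content of the proof is to arrange a strictly ergodic model that (a) still factors onto $Z_d$ and (b) is an almost one-to-one extension of $Z_d$, i.e. is $d$-step almost automorphic.

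The steps I would carry out, in order, are as follows. First, take the measure-theoretic factor map $\pi_d\colon X\to Z_d$ onto the $d$-step nilfactor and regard $Z_d$ as a topological system (it is an inverse limit of $d$-step nilsystems, hence carries a natural minimal, uniquely ergodic topological structure). Second, apply Weiss's Theorem to the factor map $\pi_d\colon (X,\X,\mu,T)\to (Z_d,\mu_d,T)$, using the uniquely ergodic topological model $Z_d$ on the base, to obtain a uniquely ergodic model $(\h{X'},\h{\X'},\h{\mu},T)$ for $(X,\X,\mu,T)$ together with a topological factor map $\h{\pi_d'}\colon \h{X'}\to Z_d$ that is a model for $\pi_d$. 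Third, feed the extension $\h{\pi_d'}\colon \h{X'}\to Z_d$ into Theorem \ref{Furstenberg-Weiss}: here $Z_d$ is the non-periodic minimal base $Y$ (non-periodicity is guaranteed by the non-triviality hypothesis on the nilfactor), and $\h{X'}$ is the topologically transitive (indeed uniquely ergodic) extension $X'$. The theorem then yields an almost $1$-$1$ minimal extension $\h{\pi_d}\colon \h{X}\to Z_d$, and by part (5) of Theorem \ref{Furstenberg-Weiss} the new system $(\h{X},T)$ is uniquely ergodic, hence strictly ergodic; parts (3) and (4) ensure that the Borel isomorphism $\theta$ carries the invariant measure across, so $(\h{X},T)$ is still a model for $(X,\X,\mu,T)$. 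Fourth, since $\h{\pi_d}\colon \h{X}\to Z_d$ is an almost one-to-one extension of a $d$-step nilsystem-limit, $(\h{X},T)$ is by definition a $d$-step almost automorphic system, and by the remark above it is automatically an $\F^{[d]}$ and $\G^{[d]}$ strictly ergodic model. Invoking Theorem A completes the argument.

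The main obstacle I anticipate is verifying that the two applications of black-box theorems have their hypotheses genuinely met, rather than any computation. Concretely: one must check that the non-triviality of the $d$-step nilfactor forces $Z_d$ to be non-periodic (so that Theorem \ref{Furstenberg-Weiss} applies), and that $\h{X'}$ produced by Weiss's Theorem is topologically transitive with $\h{X'}$ compact metric, as required by the hypotheses of Theorem \ref{Furstenberg-Weiss}. One must also confirm that the almost $1$-$1$ modification preserves the property of being a \emph{model} for $(X,\X,\mu,T)$ --- this is exactly what parts (3)--(5) of Theorem \ref{Furstenberg-Weiss} are designed to supply, since $\theta$ is a measure-theoretic isomorphism of $(\h{X'},\h{\mu})$ onto $(\h{X},\h{\mu}\circ\theta^{-1})$ of full measure. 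Granting these checks, the theorem follows by composing the three stated results along the displayed commutative diagram.
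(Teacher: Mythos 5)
Your proposal follows essentially the same route as the paper: apply Weiss's relative theorem to the factor map $\pi_d\colon X\to Z_d$ to get a uniquely ergodic model over the topological nilfactor, upgrade it to an almost one-to-one extension via the Furstenberg--Weiss theorem (using parts (3)--(5) to keep it a strictly ergodic model), and then invoke the fact from the proof of Theorem A that any strictly ergodic model factoring onto $Z_d$ is automatically an $\F^{[d]}$ and $\G^{[d]}$ strictly ergodic model. Your additional attention to verifying the hypotheses (non-periodicity of $Z_d$, transitivity of the intermediate model) is a point the paper passes over silently, but the argument is the same.
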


\section{Proof of Theorem A}\label{sec-proof}

In this section we give a proof for Theorem A. To make the
idea of the proof clearer before going into the proof for the general case we show
the cases when $d=1$ and $d=2$ first. We also give a proof for weakly mixing systems for
independent interest. Finally we show the general case by induction.

\subsection{Case when $d=1$}\label{d=1}
By Jewett-Krieger's Theorem, every ergodic
system has a strictly ergodic model. Now we show this model is
$\F^{[1]}$-strictly ergodic. Let $(X,T)$ be a strictly ergodic
system and let $\mu$ be its unique $T$-invariant measure. Note that
$\F^{[1]}=\langle\id \times T\rangle$. Hence for all $x\in X$,
$$\overline{\F^{[1]}}(x^{[1]})=\{x\}\times X.$$
Since $(X,T)$ is uniquely ergodic, $\d_x\times \mu$ is the only
$\F^{[1]}$-invariant measure of $\overline{\F^{[1]}}(x^{[1]})$. In
this case Theorem A(1) is nothing but Birkhorff pointwise
ergodic theorem.

\medskip

Now consider $\Q^{[1]}$. Since $\G^{[1]}=\langle T\times T, \id\times T\rangle$,
it is easy to see that $\Q^{[1]}=X\times X$. Let $\ll$ be a
$\G^{[1]}$-invariant measure of $(X^{[1]}, \X^{[1]})=(X\times X,
\X\times \X)$. Since $\ll$ is $T\times T$-invariant, it is a
self-joining of $(X,\X,\mu,T)$ and has $\mu$ as its marginal. Let
\begin{equation}\label{a2}
\ll =\int_X \d_x\times \ll_x\ d \mu(x)
\end{equation}
be the disintegration of $\ll$ over $\mu$. Since $\ll$ is $\id
\times T$-invariant, we have
\begin{equation*}
    \ll=\id \times T \ll=\int_X \d_x\times T\ll_x \ d \mu(x).
\end{equation*}
The uniqueness of disintegration implies that
\begin{equation*}
    T\ll_x=\ll_x, \mu \ a.e.
\end{equation*}
Since $(X,\X,T)$ is uniquely ergodic, $\ll_x=\mu, \ \mu\ $ a.e. Thus
by (\ref{a2}) one has that
$$\ll=\int_X \d_x\times \ll_x\ d\mu(x)=\int_X \d_x\times \mu \ d\mu(x)=\mu\times \mu.$$
Hence $(\Q^{[1]},\G^{[1]})$ is uniquely ergodic, and
$\mu^{[1]}=\mu\times \mu$ is its unique $\G^{[1]}$-invariant
measure.


\subsection{Weakly mixing systems}\label{sec-wm}

In this subsection we show Theorem A holds for weakly
mixing systems. This result relies on the following proposition.

\begin{prop}\label{prop-wm-Fd}
Let $(X,T)$ be uniquely ergodic, $(X,\X, \mu, T)$ be weakly mixing and $d\in
\N$. Then
\begin{enumerate}
  \item $(X^{[d]},  \G^{[d]})$ is uniquely
  ergodic with the unique measure $\mu^{[d]}= \underbrace{\mu\times \ldots \times \mu}_{2^d\
  \text{times}}$.
  \item $(X^{[d]}_*, \F^{[d]})$ is uniquely
  ergodic with the unique measure $\mu^{[d]}_*= \underbrace{\mu\times \ldots \times \mu}_{2^d-1\ \text{times}}$.
\end{enumerate}
\end{prop}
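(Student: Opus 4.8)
The plan is to prove both statements simultaneously by induction on $d$, the engine being a disintegration argument that exploits the ergodicity of $(X^{[d]},\mu^{[d]},T^{[d]})$. First I would record the base case and the single external input. For $d=1$ statement (1) is exactly the computation of Subsection~\ref{d=1}, giving $\mu^{[1]}=\mu\times\mu$, while $X^{[1]}_*=X$ with $\F^{[1]}$ inducing $T$, so statement (2) is just unique ergodicity of $(X,T)$. The input I would use at every step is that, since $(X,\mu,T)$ is weakly mixing, the product system $(X^{[d]},\mu^{\bigotimes 2^d},T^{[d]})$ (with $T^{[d]}=T\times\ldots\times T$, $2^d$ times) is again weakly mixing and hence ergodic; recall also that for weakly mixing $T$ one has $\mu^{[d]}=\mu^{\bigotimes 2^d}$ and $\mu^{[d]}_*=\mu^{\bigotimes(2^d-1)}$, and these are $\G^{[d]}$- resp.\ $\F^{[d]}$-invariant, so only \emph{uniqueness} needs to be proven.

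For the inductive step, assume both statements for $d$ and identify $X^{[d+1]}=X^{[d]}\times X^{[d]}$ via ${\bf x}=({\bf x}',{\bf x}'')$ as in Section~\ref{section-topo}, so that $T^{[d+1]}_j=T^{[d]}_j\times T^{[d]}_j$ for $1\le j\le d$, $T^{[d+1]}_{d+1}=\id^{[d]}\times T^{[d]}$, and $T^{[d+1]}=T^{[d]}\times T^{[d]}$. For (1), let $\ll$ be any $\G^{[d+1]}$-invariant measure on $X^{[d+1]}$. Projecting $\G^{[d+1]}$ to either factor yields $\G^{[d]}$, so by hypothesis (1) both marginals of $\ll$ equal $\mu^{\bigotimes 2^d}$. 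The key observation is that
$$T^{[d]}\times\id=T^{[d+1]}\circ (T^{[d+1]}_{d+1})^{-1}\in\G^{[d+1]},$$
so $\ll$ is $(T^{[d]}\times\id)$-invariant. Disintegrating $\ll=\int_{X^{[d]}}\d_{\bf y}\times\ll_{\bf y}\,d\mu^{\bigotimes 2^d}({\bf y})$ over the first coordinate, invariance under $T^{[d]}\times\id$ forces $\ll_{T^{[d]}{\bf y}}=\ll_{\bf y}$ a.e.; since $(X^{[d]},\mu^{\bigotimes 2^d},T^{[d]})$ is ergodic, the map ${\bf y}\mapsto\ll_{\bf y}$ is a.e.\ a constant measure $\ll_0$, whence $\ll=\mu^{\bigotimes 2^d}\times\ll_0$. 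Matching the second marginal gives $\ll_0=\mu^{\bigotimes 2^d}$, i.e.\ $\ll=\mu^{\bigotimes 2^{d+1}}$.

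For (2) the argument is parallel and slightly more direct, using both inductive hypotheses. Under the induced identification $X^{[d+1]}_*=X^{[d]}_*\times X^{[d]}$, the generators of $\F^{[d+1]}$ act as $(T^{[d]}_j)_*\times T^{[d]}_j$ for $1\le j\le d$ and as $\id\times T^{[d]}$; projecting to the first factor gives $\F^{[d]}$ and to the second factor gives $\G^{[d]}$. Hence for any $\F^{[d+1]}$-invariant $\nu$ the first marginal is $\mu^{\bigotimes(2^d-1)}$ by (2) and the second is $\mu^{\bigotimes 2^d}$ by (1). Disintegrating $\nu=\int_{X^{[d]}}\nu_{\bf z}\times\d_{\bf z}\,d\mu^{\bigotimes 2^d}({\bf z})$ over the second coordinate and using invariance under $\id\times T^{[d]}$ gives $\nu_{T^{[d]}{\bf z}}=\nu_{\bf z}$ a.e., so by ergodicity ${\bf z}\mapsto\nu_{\bf z}$ is a.e.\ a constant $\nu_0$; thus $\nu=\nu_0\times\mu^{\bigotimes 2^d}$, and the first marginal identifies $\nu_0=\mu^{\bigotimes(2^d-1)}$, giving $\nu=\mu^{\bigotimes(2^{d+1}-1)}$.

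The routine points (the marginal computations, uniqueness of the disintegration over a standard probability space, and the fact that an invariant measure-valued function is a.e.\ constant under an ergodic transformation, checked by testing against a countable dense family of continuous functions) I would dispatch quickly. The one genuinely load-bearing step, where I expect to take care, is the passage from bare invariance to a product measure: for (1) this hinges on noticing that $T^{[d]}\times\id$ already lies in $\G^{[d+1]}$, so no ``horizontal'' generator is missing, and for both parts on the fact that \emph{weak mixing} --- not merely unique ergodicity of $(X,T)$ --- is exactly what makes $(X^{[d]},\mu^{\bigotimes 2^d},T^{[d]})$ ergodic and thereby collapses the disintegration to a constant.
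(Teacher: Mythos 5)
Your proof is correct and follows essentially the same route as the paper: induction on $d$, marginal identification via the inductive hypotheses, disintegration over one factor of $X^{[d+1]}=X^{[d]}\times X^{[d]}$, and ergodicity of the product system $(X^{[d]},\mu^{\bigotimes 2^d},T^{[d]})$ (from weak mixing) to force the fiber measures to be constant. The only cosmetic difference is that in part (1) you disintegrate over the first coordinate using $T^{[d]}\times\id=T^{[d+1]}(T^{[d+1]}_{d+1})^{-1}\in\G^{[d+1]}$, whereas the paper disintegrates over the second coordinate using $T^{[d+1]}_{d+1}=\id^{[d]}\times T^{[d]}$ directly; these are symmetric variants of the same argument.
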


\begin{proof}
We prove the result inductively. First we show the case when $d=1$.
In this case $\F^{[1]}=\langle\id \times T\rangle$ and $\G^{[1]}=\langle\id \times T,
T\times T\rangle$. Hence $(X^{[1]}_*, \X^{[1]}_*, \F^{[1]}_*)=(X,\X,T)$,
and it follows that $\mu^{[1]}_*=\mu$ is the unique $T$-invariant
measure. Let $\ll$ be a $\G^{[1]}$-invariant measure of $(X^{[1]},
\X^{[1]})=(X\times X, \X\times \X)$. By the argument in subsection~
\ref{d=1}, we know that $\ll=\mu^{[1]}=\mu\times \mu$.

Now assume the statements hold for $d-1$, and we show the case for
$d$. Let $\ll$ be a $\G^{[d]}$-invariant measure of $(X^{[d]},
\X^{[d]})$. Let $$p_1: (X^{[d]}, \G^{[d]})\rightarrow (X^{[d-1]},
\G^{[d-1]}); \ {\bf x} =({\bf x'}, {\bf x''})\mapsto {\bf x'}$$
$$p_2: (X^{[d]}, \G^{[d]})\rightarrow (X^{[d-1]},
\G^{[d-1]}); \ {\bf x} =({\bf x'}, {\bf x''})\mapsto {\bf x''}$$  be
the projections. Then $(p_2)_*(\ll)$ is a $\G^{[d-1]}$-invariant
measure of $X^{[d-1]}$. By inductive assumption,
$(p_2)_*(\ll)=\mu^{[d-1]}$.  Let
\begin{equation}\label{a3}
\ll =\int_{X^{[d-1]}} \ll_{{\bf x}}\times \d_{\bf x}\ d
\mu^{[d-1]}({\bf x})
\end{equation}
be the disintegration of $\ll$ over $\mu^{[d-1]}$. Since $\ll$ is
$T^{[d]}_d=\id^{[d-1]} \times T^{[d-1]}$-invariant, we have
\begin{eqnarray*}
    \ll &= &\id^{[d-1]} \times T^{[d-1]} \ll=
    \int_{X^{[d-1]}} \ll_{\bf x}\times T^{[d-1]}\d_{\bf x} \ d \mu^{[d-1]}({\bf x})\
    \\ & = & \int_{X^{[d-1]}} \ll_{\bf x}\times \d_{T^{[d-1]}\bf x} \ d \mu^{[d-1]}({\bf x})\\
    \\&=& \int_{X^{[d-1]}} \ll_{(T^{[d-1]})^{-1}\bf x}\times \d_{\bf x} \ d \mu^{[d-1]}({\bf x}).
\end{eqnarray*}
The uniqueness of disintegration implies that
\begin{equation}\label{a4}
    \ll_{(T^{[d-1]})^{-1}\bf x}=\ll_{\bf x},\quad \mu^{[d-1]} \ a.e. \ {\bf x}\in X^{[d-1]}.
\end{equation}

Define $$F: (X^{[d-1]},\X^{[d-1]},T^{[d-1]}) \lra M(X^{[d-1]}): \
{\bf x}\mapsto \ll_{\bf x}.$$ By (\ref{a4}), $F$ is a
$T^{[d-1]}$-invariant $M(X^{[d-1]})$-value function. Since
$(X,\X,\mu, T)$ is weakly mixing, $(X^{[d-1]},\X^{[d-1]},T^{[d-1]})$
is ergodic and hence $\ll_{\bf x}=\nu, \ \mu^{[d-1]}\ $ a.e. for
some $\nu\in M(X^{[d-1]})$. Thus by (\ref{a3}) one has that
$$\ll=\int_{X^{[d-1]}}  \ll_{\bf x}\times \d_{\bf x}\ d\mu^{[d-1]}({\bf x})=
\int_{X^{[d-1]}} \nu \times\d_{\bf x}\ d\mu^{[d-1]}({\bf x})=
\nu\times\mu^{[d-1]} .$$ Then we have that $\nu=(p_1)_*(\ll)$ is a
$\G^{[d-1]}$-invariant measure of $X^{[d-1]}$. By inductive
assumption, $\mu^{[d-1]}$ is the only $\G^{[d-1]}$-invariant measure of $X^{[d-1]}$ and hence $\nu=(p_1)_*(\ll)=\mu^{[d-1]}$. Thus
$\ll=\mu^{[d-1]}\times \mu^{[d-1]}=\mu^{[d]}$. That is, $(X^{[d]},
\X^{[d]}, \mu^{[d]}, \G^{[d]})$ is uniquely ergodic.

\medskip

Now we show that $(X^{[d]}_*, \X^{[d]}_*, \mu^{[d]}_*, \F^{[d]})$ is
uniquely ergodic. The proof is similar. Let $\ll$ be a $\F^{[d]}$-invariant measure of
$(X^{[d]}_*, \X^{[d]}_*)$. Let $$q_1: (X^{[d]}_*,
\F^{[d]})\rightarrow (X^{[d-1]}_*, \F^{[d-1]}); \ {\bf x} =({\bf
x'_*}, {\bf x''})\mapsto {\bf x'_*}$$
$$q_2: (X^{[d]}, \F^{[d]})\rightarrow (X^{[d-1]},
\G^{[d-1]}); \ {\bf x} =({\bf x'_*}, {\bf x''})\mapsto {\bf x''}$$
be the projections. Then $(q_2)_*(\ll)$ is a $\G^{[d-1]}$-invariant
measure of $X^{[d-1]}$. By inductive assumption,
$(q_2)_*(\ll)=\mu^{[d-1]}$.  Let
\begin{equation}\label{a7}
\ll =\int_{X^{[d-1]}} \ll_{{\bf x}}\times \d_{\bf x}\ d
\mu^{[d-1]}({\bf x})
\end{equation}
be the disintegration of $\ll$ over $\mu^{[d-1]}$. Since $\ll$ is
$T^{[d]}_d=\id^{[d-1]} \times T^{[d-1]}$-invariant, we have
\begin{eqnarray*}
    \ll &= &\id^{[d-1]} \times T^{[d-1]} \ll=
    \int_{X^{[d-1]}} \ll_{\bf x}\times T^{[d-1]}\d_{\bf x} \ d \mu^{[d-1]}({\bf x})\
    \\ & = & \int_{X^{[d-1]}} \ll_{\bf x}\times \d_{T^{[d-1]}\bf x} \ d \mu^{[d-1]}({\bf x})\\
    \\&=& \int_{X^{[d-1]}} \ll_{(T^{[d-1]})^{-1}\bf x}\times \d_{\bf x} \ d \mu^{[d-1]}({\bf x}).
\end{eqnarray*}
The uniqueness of disintegration implies that
\begin{equation}\label{a8}
    \ll_{(T^{[d-1]})^{-1}\bf x}=\ll_{\bf x},\quad \mu^{[d-1]} \ a.e.
\end{equation}

Define $$F: (X^{[d-1]},\X^{[d-1]},T^{[d-1]}) \lra M(X^{[d-1]}_*): \
{\bf x}\mapsto \ll_{\bf x}.$$ By (\ref{a8}), $F$ is a
$T^{[d-1]}$-invariant $M(X^{[d-1]}_*)$-value function. Since
$(X,\X,\mu, T)$ is weakly mixing, $(X^{[d-1]},\X^{[d-1]},T^{[d-1]})$
is ergodic and hence $\ll_{\bf x}=\nu, \ \mu^{[d-1]}\ $ a.e. for
some $\nu\in M(X^{[d-1]}_*)$. Thus by (\ref{a7}) one has that
$$\ll=\int_{X^{[d-1]}}  \ll_{\bf x}\times \d_{\bf x}\ d\mu^{[d-1]}({\bf x})=
\int_{X^{[d-1]}} \nu \times\d_{\bf x}\ d\mu^{[d-1]}({\bf x})=
\nu\times\mu^{[d-1]} .$$ Then we have that $\nu=(q_1)_*(\ll)$ is a
$\F^{[d-1]}$-invariant measure of $X^{[d-1]}_*$. By inductive
assumption, $\mu^{[d-1]}_*$ is the only $\F^{[d-1]}$-invariant measure of $X^{[d-1]}_*$ and $\nu=(q_1)_*(\ll)=\mu^{[d-1]}_*$. Thus
$\ll=\mu^{[d-1]}_*\times \mu^{[d-1]}=\mu^{[d]}_*$. Hence
$(X_*^{[d]}, \X^{[d]}_*, \mu^{[d]}_*, \F^{[d]})$ is uniquely
ergodic. The proof is completed.
\end{proof}

\begin{thm}\label{Thm-wm-Fd}
If $(X,\X, \mu, T)$ is a weakly mixing m.p.t., then it has
an $\F^{[d]}$ and $\G^{[d]}$ strictly ergodic model for all $d\in
\N$.
\end{thm}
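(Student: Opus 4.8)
The plan is to show that, once we pass to a suitable topological model, essentially no work beyond Proposition~\ref{prop-wm-Fd} and the minimality results of Shao--Ye is required, and moreover that a \emph{single} model serves all $d$ at once. First I would invoke Jewett--Krieger's Theorem to replace $(X,\X,\mu,T)$ by a strictly ergodic model; since weak mixing is an invariant of measure-theoretic isomorphism, the resulting system is still weakly mixing as a m.p.t. Thus we may assume from the outset that $(X,T)$ is minimal and uniquely ergodic with unique invariant measure $\mu$, and that $(X,\X,\mu,T)$ is weakly mixing. With this model fixed once and for all, let $d\in\N$ be arbitrary. Proposition~\ref{prop-wm-Fd} then supplies the two facts that drive everything: $(X^{[d]},\G^{[d]})$ is uniquely ergodic with unique measure $\mu^{[d]}$ (the product measure), and $(X^{[d]}_*,\F^{[d]})$ is uniquely ergodic with unique measure $\mu^{[d]}_*$.

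For the $\G^{[d]}$-claim I would argue as follows. The set $\Q^{[d]}$ is a closed $\G^{[d]}$-invariant subset of $X^{[d]}$, so every $\G^{[d]}$-invariant probability measure carried by $\Q^{[d]}$ is in particular a $\G^{[d]}$-invariant measure on $X^{[d]}$, hence equals $\mu^{[d]}$ by Proposition~\ref{prop-wm-Fd}. Since such a measure always exists, this forces $\mu^{[d]}$ to be supported on $\Q^{[d]}$ and makes $(\Q^{[d]},\G^{[d]})$ uniquely ergodic. Minimality of $(\Q^{[d]},\G^{[d]})$ is provided by \cite{SY} because $(X,T)$ is minimal, and unique ergodicity together with minimality is precisely strict ergodicity. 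Hence the model is $\G^{[d]}$-strictly ergodic.

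For the $\F^{[d]}$-claim, fix $x\in X$. Since every $S\in\F^{[d]}$ satisfies $S_\emptyset=\id$, the coordinate indexed by $\emptyset$ is constant along the $\F^{[d]}$-orbit of $x^{[d]}$, so $\overline{\F^{[d]}}(x^{[d]})\subseteq \{x\}\times X^{[d]}_*$, and the $\F^{[d]}$-action on this set is carried by the projection forgetting the first coordinate onto the $\F^{[d]}$-action on $X^{[d]}_*$. Consequently any $\F^{[d]}$-invariant measure $\nu$ on $\overline{\F^{[d]}}(x^{[d]})$ has the form $\d_x\times\nu_*$, where $\nu_*$ is an $\F^{[d]}$-invariant measure on $X^{[d]}_*$; by Proposition~\ref{prop-wm-Fd} we get $\nu_*=\mu^{[d]}_*$, so $\nu=\d_x\times\mu^{[d]}_*$ is the unique invariant measure and $(\overline{\F^{[d]}}(x^{[d]}),\F^{[d]})$ is uniquely ergodic. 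As \cite{SY} guarantees that $(\overline{\F^{[d]}}(x^{[d]}),\F^{[d]})$ is minimal for every $x$ (again because $(X,T)$ is minimal), it is strictly ergodic for every $x$, which is exactly the $\F^{[d]}$-strictly ergodic condition. Since $d$ was arbitrary, the one model works for all $d$ simultaneously.

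The genuine difficulty has already been absorbed into Proposition~\ref{prop-wm-Fd}, whose inductive disintegration argument establishes unique ergodicity of the full product actions; relative to that, the only points requiring care are the reduction step (checking that weak mixing survives the passage to a topological model, which is automatic as a measure-isomorphism invariant) and the identification of $\overline{\F^{[d]}}(x^{[d]})$ as a subset of $\{x\}\times X^{[d]}_*$ so that its invariant measures can be matched against $\mu^{[d]}_*$. I expect the $\F^{[d]}$ matching to be the most error-prone step, since one must track the fixed first coordinate and confirm that the $\F^{[d]}$-action genuinely factors through the projection onto $X^{[d]}_*$.
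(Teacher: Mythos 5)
Your proposal is correct and follows essentially the same route as the paper: pass to a Jewett--Krieger strictly ergodic model (weak mixing being a measure-isomorphism invariant), use the Shao--Ye minimality results for $(\Q^{[d]},\G^{[d]})$ and $(\overline{\F^{[d]}}(x^{[d]}),\F^{[d]})$, and derive unique ergodicity from Proposition~\ref{prop-wm-Fd}. The only cosmetic difference is that the paper quotes $\Q^{[d]}=X^{[d]}$ and $\overline{\F^{[d]}}(x^{[d]})=\{x\}\times X^{[d]}_*$ directly from \cite[Theorem 3.11]{SY}, while you deduce the needed identifications from the inclusion $\overline{\F^{[d]}}(x^{[d]})\subseteq\{x\}\times X^{[d]}_*$ and the restriction of invariant measures to closed invariant subsets, which amounts to the same thing.
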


\begin{proof}
By Jewett-Krieger' Theorem, $(X,\X,
\mu, T)$ has a uniquely ergodic model. Without loss of generality,
we assume that $(X,T)$ itself is a  minimal t.d.s.
and $\mu$ is its unique $T$-invariant measure. By \cite[Theorem
3.11.]{SY}, $(\Q^{[d]}=X^{[d]}, \G^{[d]})$ is minimal, and for all
$x\in X$, $(\overline{\F^{[d]}}(x^{[d]}), \F^{[d]})$ is minimal and
$\overline{\F^{[d]}}(x^{[d]})=\{x\}\times X^{[d]}_*=\{x\}\times
X^{2^d-1}.$ By Proposition \ref{prop-wm-Fd}, $(\Q^{[d]}, \G^{[d]})$
and $(\overline{\F^{[d]}}(x^{[d]}),\F^{[d]})$ (for all $x\in X$) are
uniquely ergodic . Hence it has an $\F^{[d]}$ and $\G^{[d]}$
strictly ergodic model for all $d\in \N$.
\end{proof}

\subsection{Case when $d=2$}\label{d=2}
In this case we can give the explicit description of the
unique measure. Since the proof is long, we put it in Appendix \ref{sec-H}.
People familiar with the materials can read the proof for the general
case directly.

\subsection{General case}

In this section we prove Theorem A in the general case. We prove it by
induction on $d$. $d=1$ and $d=2$ is showed in subsection \ref{d=1} and
Appendix \ref{sec-H}. Now we assume $d$ and show the case when $d+1$.

\subsubsection{Notations} Recall that $\I^{[d]}$ is the
$T^{[d]}$-invariant $\sigma$-algebra of $(X^{[d]}, \mu^{[d]},
T^{[d]})$ and
$$\mu^{[d+1]}=\mu^{[d]}\mathop{\times}_{\I^{[d]}} \mu^{[d]}.$$
Let
\begin{equation}\label{Omega-d}
\begin{split}
    (X^{[d]}, \mu^{[d]})\stackrel{\phi}{\lra} (\Omega_d,
    \I^{[d]}, P_d);\  {\bf x}&\lra \phi({\bf x})
    \end{split}
\end{equation}
be the factor map. Let
\begin{equation}\label{}
    \mu^{[d]}=\int_{\Omega_d} \mu^{[d]}_\omega \ d P_d(\omega)
\end{equation}
denote the ergodic decomposition of $\mu^{[d]}$ under $T^{[d]}$.
Then by definition
\begin{equation}\label{}
\mu^{[d+1]}=\int_{\Omega_d} \mu^{[d]}_\omega\times \mu^{[d]}_\omega
\ d P_d(\omega).
\end{equation}

\subsubsection{A property about $Z_d$}

\begin{prop}\cite[Proposition 4.7.]{HK05}\label{HK-Pro4.7}
Let $d\ge 1$ be an integer.
\begin{enumerate}
  \item As a joining of $2^d$ copies of $(X, \mu)$, $(X^{[d]},\mu^{[d]})$ is
relatively independent over the joining $(Z_{d-1}^{[d]},
\mu_{d-1}^{[d]})$ of $2^d$ copies of $(Z_{d-1}, \mu_{d-1})$.

  \item $Z_d$ is the smallest factor $Y$ of $X$ so that the $\sigma$-algebra
$\I^{[d]}$ is measurable with respect to $Y^{[d]}$.
\end{enumerate}
\end{prop}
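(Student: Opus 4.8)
The plan is to prove both assertions by induction on $d$, with the Gowers--Host--Kra seminorms as the main device. For $f\in L^\infty(\mu)$ set
\[
\HK f\HK_d^{2^d}=\int_{X^{[d]}}\bigotimes_{\ep\in V_d}\mathcal{C}^{|\ep|}f\,d\mu^{[d]},
\]
where $|\ep|=\sum_i\ep_i$ and $\mathcal{C}$ is complex conjugation. Two inputs drive everything. The first is the Cauchy--Schwarz--Gowers inequality
\[
\Big|\int_{X^{[d]}}\bigotimes_{\ep\in V_d}f_\ep\,d\mu^{[d]}\Big|\le\prod_{\ep\in V_d}\HK f_\ep\HK_d ,
\]
which I would prove by induction: writing ${\bf x}=({\bf x'},{\bf x''})$ and disintegrating $\mu^{[d]}=\int_{\Omega_{d-1}}\mu^{[d-1]}_\omega\times\mu^{[d-1]}_\omega\,dP_{d-1}(\omega)$, the integral becomes $\int_{\Omega_{d-1}}\big(\int\bigotimes_\eta f_{\eta0}\,d\mu^{[d-1]}_\omega\big)\big(\int\bigotimes_\eta f_{\eta1}\,d\mu^{[d-1]}_\omega\big)\,dP_{d-1}(\omega)$, and Cauchy--Schwarz in $L^2(P_{d-1})$ followed by the inductive hypothesis gives the bound. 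The second, deeper input is the identification of the kernel of the seminorm with the characteristic factor: $\HK f\HK_d=0$ if and only if $\E(f\mid\ZZ_{d-1})=0$. This is where the definition $\ZZ_{d-1}\cong\J^{[d]}_*$ enters, via the \emph{dual functions} $\mathcal{D}f(x)=\E\big(\prod_{\ep\ne{\bf 0}}\mathcal{C}^{|\ep|}f(x_\ep)\mid x_{\bf 0}=x\big)$, which are $\ZZ_{d-1}$-measurable and satisfy $\HK f\HK_d^{2^d}=\int_X f\cdot\mathcal{D}f\,d\mu$.

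Granting these, part (1) follows by a telescoping argument. Decompose each $f_\ep=\E(f_\ep\mid\ZZ_{d-1})+g_\ep$ with $\E(g_\ep\mid\ZZ_{d-1})=0$, so that $\HK g_\ep\HK_d=0$. Expanding $\bigotimes_\ep f_\ep$ multilinearly, the Cauchy--Schwarz--Gowers inequality makes every term containing at least one factor $g_\ep$ integrate to $0$; hence
\[
\int_{X^{[d]}}\bigotimes_\ep f_\ep\,d\mu^{[d]}=\int_{X^{[d]}}\bigotimes_\ep\E(f_\ep\mid\ZZ_{d-1})\,d\mu^{[d]},
\]
and since the right-hand integrand is $Z_{d-1}^{[d]}$-measurable this equals $\int\bigotimes_\ep\E(f_\ep\mid\ZZ_{d-1})\,d\mu_{d-1}^{[d]}$. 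This is precisely relative independence of $(X^{[d]},\mu^{[d]})$ over $(Z_{d-1}^{[d]},\mu_{d-1}^{[d]})$.

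For part (2) I would first record the identity, immediate from $\mu^{[d+1]}=\mu^{[d]}\times_{\I^{[d]}}\mu^{[d]}$,
\[
\HK f\HK_{d+1}^{2^{d+1}}=\int_{X^{[d]}}\Big|\E\Big(\bigotimes_{\eta\in V_d}\mathcal{C}^{|\eta|}f\,\Big|\,\I^{[d]}\Big)\Big|^2\,d\mu^{[d]}.
\]
To see that $\I^{[d]}$ is $Z_d^{[d]}$-measurable, note that any $\I^{[d]}$-measurable $\psi$ satisfies $\psi({\bf x'})=\psi({\bf x''})$ for $\mu^{[d+1]}$-a.e.\ ${\bf x}$, the joining being relatively independent over the common factor $\I^{[d]}$; applying part (1) at level $d+1$ (relative independence over $Z_d^{[d+1]}=Z_d^{[d]}\times Z_d^{[d]}$) gives $\|\psi\|_{L^2(\mu^{[d]})}^2=\langle\E(\psi\mid Z_d^{[d]})({\bf x'}),\E(\psi\mid Z_d^{[d]})({\bf x''})\rangle_{\mu^{[d+1]}}$, and Cauchy--Schwarz together with $\|\E(\psi\mid Z_d^{[d]})\|\le\|\psi\|$ forces $\psi=\E(\psi\mid Z_d^{[d]})$, i.e.\ $\I^{[d]}\subseteq Z_d^{[d]}$. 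For minimality, suppose $Y$ is a factor with $\I^{[d]}$ measurable with respect to $Y^{[d]}$; testing the hypothesis on sets $A\times X^{[d-1]}$ with $A\in\I^{[d-1]}$ (and $T^{[d]}=T^{[d-1]}\times T^{[d-1]}$) shows $\I^{[d-1]}\subseteq Y^{[d-1]}$, so by inductive minimality at level $d-1$ one gets $Z_{d-1}\subseteq Y$. Then relative independence of $\mu^{[d]}$ over $Z_{d-1}^{[d]}$ is preserved when the conditioning is enlarged coordinatewise to $Y^{[d]}$, whence $\E(f\mid\Y)=0$ makes the inner conditional expectation in the identity vanish; thus $\HK f\HK_{d+1}=0$, so $\E(f\mid\ZZ_d)=0$ and $\ZZ_d\subseteq\Y$.

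The main obstacle is the kernel characterization $\HK f\HK_d=0\iff\E(f\mid\ZZ_{d-1})=0$, which bridges the two descriptions of the characteristic factor (through $\J^{[d]}_*$ and through the seminorm). Establishing it amounts to showing that the dual functions $\mathcal{D}f$ generate the $\sigma$-algebra $\ZZ_{d-1}$, and this is the point where parts (1) and (2) genuinely interlock; care is required to organise the simultaneous induction so that the level-$d$ characterization is available exactly when the telescoping in part (1) and the norm computation in part (2) call for it, without circular reasoning.
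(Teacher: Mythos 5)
The paper does not actually prove this proposition: it is quoted from \cite[Proposition 4.7]{HK05}, and your skeleton reproduces Host--Kra's argument faithfully. Part (1) by splitting $f_\ep=\E(f_\ep\mid\ZZ_{d-1})+g_\ep$ and killing every mixed term with the Cauchy--Schwarz--Gowers inequality, and part (2) from the identity expressing $\HK f\HK_{d+1}^{2^{d+1}}$ as $\int|\E(\bigotimes_\eta C^{|\eta|}f\mid\I^{[d]})|^2\,d\mu^{[d]}$ together with part (1) one level up, are correct reductions, and your ordering of the induction (seminorm facts for all levels first, then (1) at every level, then (2) level by level) does avoid circularity.

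The genuine gap is the one you flag but do not close: the kernel identification $\HK f\HK_d=0\iff\E(f\mid\ZZ_{d-1})=0$. With the definition of $\ZZ_{d-1}$ adopted in this paper --- the $\sigma$-algebra of $X$ isomorphic to $\J^{[d]}_*$, the algebra of $\F^{[d]}$-invariant sets of $(X^{[d]}_*,\mu^{[d]}_*)$ --- this equivalence is \cite[Lemma 4.3]{HK05} and it carries essentially all of the analytic content of the proposition; everything downstream of it is formal joining bookkeeping. Proving it requires the ergodicity of $(X^{[d]},\mu^{[d]})$ under $\G^{[d]}$, the identification of $\J^{[d]}$ with the sets depending only on the coordinate ${\bf 0}$, and the ergodic theorem for the face group applied to the dual functions; saying that ``the dual functions generate $\ZZ_{d-1}$'' restates the target rather than establishing it. A second, smaller gap sits in your minimality step: for a general joining, a function of ${\bf x'}$ that is measurable with respect to the join of the two coordinatewise copies of $Y^{[d-1]}$ need not be measurable with respect to the copy carried by ${\bf x'}$ alone (the diagonal self-joining, with $Y$ taken full in one coordinate and trivial in the other, is a counterexample), so ``testing on $A\times X^{[d-1]}$'' does not immediately yield $\I^{[d-1]}\subseteq Y^{[d-1]}$. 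The step is saved here because conditioning on ${\bf x'}$ sends $v({\bf x''})$ to $\E(v\mid\I^{[d-1]})({\bf x'})$, and the mean ergodic theorem writes $\E(\cdot\mid\I^{[d-1]})$ as a limit of averages of powers of $T^{[d-1]}$, which preserve $Y^{[d-1]}$-measurability since $Y$ is $T$-invariant --- but that justification has to be supplied.
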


We say that a factor map $\pi: (X,\X,\mu, T)\rightarrow
(Y,\Y,\nu,T)$ is an {\em ergodic} extension if every $T$-invariant
$\X$-measurable function is $\Y$-measurable, i.e. $\I(X,T)\subset
\Y$. Thus Proposition \ref{HK-Pro4.7} implies that
$$\pi^{[d]}_d: (X^{[d]},\mu^{[d]}, T^{[d]})\rightarrow (Z_d^{[d]},\mu_d^{[d]}, T^{[d]}) $$
is $T^{[d]}$-ergodic. That means that $\I^{[d]}(X)=\I^{[d]}(Z_d)$,
and hence $(\Omega_d(X), \I^{[d]}(X), P_d)=(\Omega_d(Z_d),
\I^{[d]}(Z_d), P_d)$. So we can denote the ergodic decomposition of
$\mu^{[d]}_d$ under $T^{[d]}$ by
\begin{equation}\label{b5}
    \mu_d^{[d]}=\int_{\Omega_d} \mu^{[d]}_{d,\omega} \ d P_d(\omega).
\end{equation}
Then by definition
\begin{equation}\label{}
\mu^{[d+1]}_d=\int_{\Omega_d} \mu^{[d]}_{d,\omega}\times
\mu^{[d]}_{d,\omega} \ d P_d(\omega).
\end{equation}
This property is crucial in the proof. Combining (\ref{Omega-d}) and
(\ref{b5}), one has factor maps

\begin{equation}\label{b6}
\begin{split}
    (X^{[d]}, \mu^{[d]}) \stackrel{\pi^{[d]}_d}{\lra}
    (Z_d^{[d]}, \mu^{[d]}_d) \stackrel{\psi}{\lra}(\Omega_d, P_d)
    \end{split}
\end{equation}
Note that $\phi=\psi\circ \pi_d^{[d]}$.

\subsubsection{$\G$-action}\label{G-action}

Now we assume that Theorem A(2) holds for $d\ge 1$. In this
subsection we show the existence of $\G^{[d+1]}$-model.

Let $\pi_d: X\rightarrow Z_d$ be the factor map from $X$ to its
$d$-step nilfactor $Z_d$. By definition, $Z_d$ may be regarded as a
topological system in the natural way. By Weiss's Theorem,
there is a uniquely ergodic model $(\h{X}, \h{\X}, \h{\mu}, T)$ for
$(X,\X, \mu, T)$ and a factor map $\h{\pi_d}: \h{X}\rightarrow Z_d$
which is a model for $\pi_d: X\rightarrow Z_d$.

\[
\begin{CD}
X @>{}>> \h{X}\\
@V{\pi_d}VV      @VV{\h{\pi_d}}V\\
Z_d @>{ }>> Z_d
\end{CD}
\]

\medskip

Hence for simplicity, we may assume that $(\h{X}, \h{\X}, \h{\mu},
T)= (X,\X, \mu, T)$ and $\pi_d=\h{\pi_d}$. Now we show that
$(\Q^{[d+1]}(X), \mu^{[d+1]}, \G^{[d+1]})$ is uniquely ergodic.

Let $\ll$ be a $\G^{[d+1]}$-invariant measure of $\Q^{[d+1]}=\Q^{[d+1]}(X) $. Let
$$p_1: (\Q^{[d+1]}, \G^{[d+1]})\rightarrow (\Q^{[d]}, \G^{[d+1]}); \
{\bf x} =({\bf x'}, {\bf x''})\mapsto {\bf x'}$$
$$p_2: (\Q^{[d+1]}, \G^{[d+1]})\rightarrow (\Q^{[d]},
\G^{[d+1]}); \ {\bf x} =({\bf x'}, {\bf x''})\mapsto {\bf x''}$$ be
the projections. Then $(p_2)_*(\ll)$ is a $\G^{[d+1]}$-invariant
measure of $\Q^{[d]}$. Note that $\G^{[d+1]}$ acts on $\Q^{[d]}$ as
$\G^{[d]}$ actions. By the induction hypothesis,
$(p_2)_*(\ll)=\mu^{[d]}$. Hence let
\begin{equation}\label{a11}
\ll =\int_{\Q^{[d]}} \ll_{{\bf x}}\times \d_{\bf x}\ d \mu^{[d]}
({\bf x})
\end{equation}
be the disintegration of $\ll$ over $\mu^{[d]}$. Since $\ll$ is
$T^{[d+1]}_{d+1}=\id^{[d]} \times T^{[d]}$-invariant, we have
\begin{eqnarray*}
    \ll &= &\id^{[d]} \times T^{[d]} \ll=
    \int_{\Q^{[d]}} \ll_{\bf x}\times T^{[d]}\d_{\bf x} \ d \mu^{[d]}({\bf x})\
    \\ & = & \int_{\Q^{[d]}} \ll_{\bf x}\times \d_{T^{[d]}({\bf x})} \ d
    \mu^{[d]}({\bf x})\\
    \\&=& \int_{\Q^{[d]}} \ll_{(T^{[d]})^{-1}({\bf x})}
    \times \d_{\bf x} \ d \mu^{[d]}({\bf x}).
\end{eqnarray*}
The uniqueness of disintegration implies that
\begin{equation}\label{a12}
    \ll_{(T^{[d]})^{-1}({\bf x})}=\ll_{\bf x},\quad \mu^{[d]} \ a.e.
    \ {\bf x}\in \Q^{[d]}.
\end{equation}

Define $$F: (\Q^{[d]},T^{[d]}) \lra M(X^{[d]}): \ {\bf x}\mapsto
\ll_{\bf x}.$$ By (\ref{a12}), $F$ is a $T^{[d]}$-invariant
$M(X^{[d]})$-value function. Hence $F$ is $\I^{[d]}$-measurable, and
hence $\ll_{\bf x}=\ll_{\phi({\bf x})}, \ \mu^{[d]}\ $ a.e., where
$\phi$ is defined in (\ref{Omega-d}).

Thus by (\ref{a11}) one has that
\begin{equation*}
\begin{split}
    \ll& =\int_{\Q^{[d]}}  \ll_{\bf x}\times \d_{\bf x}\ d\mu^{[d]} ({\bf x})=
\int_{\Q^{[d]}} \ll_{\phi({\bf x})} \times\d_{{\bf x}}\ d\mu^{[d]} ({\bf x})\\
       &=\int_{\Omega_d}\int_{\Q^{[d]}}\ll_{\omega}
       \times \d_{{\bf x}}\ d \mu^{[d]}_\omega({\bf x})
       d P_d(\omega)\\
       &= \int_{\Omega_d}\ll_\omega\times \Big(\int_{\Q^{[d]}} \d_{{\bf x}}\
       d\mu^{[d]}_\omega({\bf x})\Big)d P_d(\omega)\\
       &=\int_{\Omega_d}\ll_\omega\times \mu^{[d]}_\omega \ dP_d(\omega)
\end{split}
\end{equation*}

Let $\pi_d^{[d+1]}: (\Q^{[d+1]}(X), \G^{[d+1]})\lra
(\Q^{[d+1]}(Z_d),\G^{[d+1]})$ be the natural factor map. By Theorem
\ref{HK-Zk}, $(\Q^{[d+1]}(Z_d),\mu_d^{[d+1]})$ is uniquely ergodic.
Hence
\begin{equation*}
\begin{split}
    {\pi_d}^{[d+1]}_*(\ll) =\mu_d^{[d+1]}=\int_{\Omega_d}\mu_{d,\omega}^{[d]}
    \times \mu_{d,\omega}^{[d]} \
    d P_d(\omega).
\end{split}
\end{equation*}
So
\begin{equation}\label{b7}
{\pi_d}^{[d]}_*(\ll_\omega)={\pi_d}^{[d]}_*(\mu^{[d]}_\omega)=\mu_{d,\omega}^{[d]}.
\end{equation}
Note that we have that
$$(p_1)_*(\ll)=(p_2)_*(\ll)=\mu^{[d]},$$
and hence we have
\begin{equation}\label{b8}
    \mu^{[d]}=\int_{\Omega_d}\ll_\omega\ d P_d (\omega)=
\int_{\Omega_d}\mu_\omega^{[d]}\ dP_d(\omega).
\end{equation}
But by (\ref{b7}) and (\ref{b6}) we have
\begin{equation*}
\phi_*(\ll_\omega)=\phi_*(\mu_\omega^{[d]})=\psi_*(\mu_{d,\omega}^{[d]})=\d_\omega.
\end{equation*}
Hence by the uniqueness of disintegration and (\ref{b8}), we have
that $\ll_\omega=\mu_\omega^{[d]}$, $P_d$ a.e. $\omega\in \Omega_d$.
Thus we have
\begin{equation}\label{limit-C2}
\ll_{\Q;d+1}=\int_{\Omega_d}\ll_\omega\times \mu_\omega^{[d]} \ dP_d(\omega)
=\int_{\Omega_d}\mu_\omega^{[d]}\times \mu_\omega^{[d]} \
dP_d(\omega)=\mu^{[d+1]}.
\end{equation} That is, $(\Q^{[d+1]}, \mu^{[d+1]},
\G^{[d+1]})$ is uniquely ergodic. The proof of Theorem A(2)
for $\G$ is completed.

\subsubsection{$\F$-actions}\label{F-action}

Now we assume that Theorem A(1) holds for $d\ge 1$. In this
subsection we show the existence of $\F^{[d+1]}$-model.
We use the same model as in the previous subsection.

Let $\ll$ be a $\F^{[d+1]}$-invariant measure of
$\overline{\F^{[d+1]}}(x^{[d+1]}) $. Let
$$p_1: (\overline{\F^{[d+1]}}(x^{[d+1]}), \F^{[d+1]})
\rightarrow (\overline{\F^{[d]}}(x^{[d]}), \F^{[d+1]}); \ {\bf x}
=({\bf x'}, {\bf x''})\mapsto {\bf x'}$$
$$p_2: (\overline{\F^{[d+1]}}(x^{[d+1]}), \F^{[d+1]})\rightarrow (\Q^{[d]},
\F^{[d+1]}); \ {\bf x} =({\bf x'}, {\bf x''})\mapsto {\bf x''}$$  be
the projections. Note that $$(\overline{\F^{[d]}}(x^{[d]}),
\F^{[d+1]})\simeq (\overline{\F^{[d]}}(x^{[d]}), \F^{[d]})\
\text{and} \ (\Q^{[d]}, \F^{[d+1]})\simeq (\Q^{[d]}, \G^{[d]}).$$
Then $(p_2)_*(\ll)$ is a $\G^{[d]}$-invariant measure of $\Q^{[d]}$.
By subsection \ref{G-action}, $(p_2)_*(\ll)=\mu^{[d]}$. Hence let
\begin{equation}\label{a13}
\ll =\int_{\Q^{[d]}} \ll_{{\bf x}}\times \d_{\bf x}\ d \mu^{[d]}
({\bf x})
\end{equation}
be the disintegration of $\ll$ over $\mu^{[d]}$. Since $\ll$ is
$T^{[d+1]}_{d+1}=\id^{[d]} \times T^{[d]}$-invariant, we have
\begin{eqnarray*}
    \ll &= &\id^{[d]} \times T^{[d]} \ll=
    \int_{\Q^{[d]}} \ll_{\bf x}\times T^{[d]}\d_{\bf x} \ d \mu^{[d]}({\bf x})\
    \\ & = & \int_{\Q^{[d]}} \ll_{\bf x}\times \d_{T^{[d]}({\bf x})} \ d
    \mu^{[d]}({\bf x})\\
    \\&=& \int_{\Q^{[d]}} \ll_{(T^{[d]})^{-1}({\bf x})}
    \times \d_{\bf x} \ d \mu^{[d]}({\bf x}).
\end{eqnarray*}
The uniqueness of disintegration implies that
\begin{equation}\label{a14}
    \ll_{(T^{[d]})^{-1}({\bf x})}=\ll_{\bf x},\quad \mu^{[d]} \ a.e.
\end{equation}

Define $$F: \Q^{[d]} \lra M(\overline{\F^{d}}(x^{[d]})): \ {\bf
x}\mapsto \ll_{\bf x}.$$ By (\ref{a14}), $F$ is a
$T^{[d]}$-invariant $M(\overline{\F^{d}}(x^{[d]}))$-value function.
Hence $F$ is $\I^{[d]}$-measurable, and hence $\ll_{\bf
x}=\ll_{\phi({\bf x})}, \ \mu^{[d]}\ $ a.e. ${\bf x}\in \Q^{[d]}$,
where $\phi$ is defined in (\ref{Omega-d}).

Thus by (\ref{a13}) one has that
\begin{equation*}
\begin{split}
    \ll& =\int_{\Q^{[d]}}  \ll_{\bf x}\times \d_{\bf x}\ d\mu^{[d]} ({\bf x})=
\int_{\Q^{[d]}} \ll_{\phi({\bf x})} \times\d_{{\bf x}}\ d\mu^{[d]} ({\bf x})\\
       &=\int_{\Omega_d}\int_{\Q^{[d]}}\ll_{\omega}
       \times \d_{{\bf x}}\ d \mu^{[d]}_\omega({\bf x})
       d P_d(\omega)\\
       &= \int_{\Omega_d}\ll_\omega\times \Big(\int_{\Q^{[d]}} \d_{{\bf x}}\
       d\mu^{[d]}_\omega({\bf x})\Big)d P_d(\omega)\\
       &=\int_{\Omega_d}\ll_\omega\times \mu^{[d]}_\omega \ dP_d(\omega)
\end{split}
\end{equation*}

Since $(\overline{\F^{d}}(x^{[d]}),\F^{[d]})$ is uniquely ergodic by
assumption, and we let $\nu^{[d]}_x$ be the unique measure. Then
$$(p_1)_*(\ll)=\nu^{[d]}_x, \ \text{and}\ (p_2)_*(\ll)=\mu^{[d]},$$
and hence we have
\begin{equation}\label{f5}
    \nu^{[d]}_x=\int_{\Omega_d}\ll_\omega\ dP_d(\omega).
\end{equation}

Note that we have a factor map $\pi_d^{[d]}:
(\overline{\F^{[d]}}(x^{[d]}),\F^{[d]}, \nu^{[d]}_x)\rightarrow
(\overline{\F^{[d]}}(\hat{x}^{[d]}), \F^{[d]}, \rho_{d,\hat{x}})$,
where $\hat{x}=\pi_d(x)$ and $\rho_{d,\hat{x}}$ as in Theorem \ref{HK-Zk}.
For each $z\in
\overline{\F^{[d]}}(\hat{x}^{[d]})$, let $\eta_z$ be the unique
$T^{[d]}$-invariant measure on $\overline{\O(z,T^{[d]})}$. Then
the map $$ \overline{\F^{[d]}}(\hat{x}^{[d]})\rightarrow
M(\Q^{[d]}(Z_d)); \ z\mapsto \eta_z$$ is a measurable map. This fact
follows from that $z\mapsto \frac{1}{N}\sum_{n<N}\d_{T^nz}$ is
continuous and $\frac{1}{N}\sum_{n<N}\d_{T^nz}$ converges to
$\eta_z$ weakly. Hence we have
\begin{equation}\label{f1}
    \mu^{[d]}_d=\int_{\overline{\F^{[d]}}(\hat{x}^{[d]})}
\eta_z \ d \rho_{d,\hat{x}}(z).
\end{equation}
In fact, it is easy to check that $\displaystyle
\int_{\overline{\F^{[d]}}(\hat{x}^{[d]})} \eta_z \ d
\rho_{d,\hat{x}}(z)$ is $\G^{[d]}$-invariant and hence it is equal
to $\mu^{[d]}_d$ by the uniqueness. Note that (\ref{f1}) is the
``ergodic decomposition'' of $\mu^{[d]}_d$ under $T^{[d]}$, except
that it happens that $\eta_z=\eta_{z'}$ for some $z\neq z'$. Hence
via map $\psi$, we have a factor map
\begin{equation*}
    \Psi: (\overline{\F^{[d]}}(\hat{x}^{[d]}),\rho_{d,\hat{x}})
    \rightarrow (\Omega_d, P_d).
\end{equation*}
And (\ref{f1}) can be rewritten as
\begin{equation}\label{f2}
    \mu^{[d]}_d=\int_{\overline{\F^{[d]}}(\hat{x}^{[d]})}
\eta_z \ d \rho_{d,\hat{x}}(z)=\int_{\Omega_d}\eta_\omega \ d
P_d(\omega)=\int_{\Omega_d} \mu^{[d]}_{d,\omega} \ d P_d(\omega).
\end{equation}

Since we have
\begin{equation}\label{f3}
\begin{split}
    (\overline{\F^{[d]}}(x^{[d]}), \nu^{[d]}_x) \stackrel{\pi^{[d]}_d}{\lra}
    (\overline{\F^{[d]}}(\hat{x}^{[d]}), \rho_{d,\hat{x}}) \stackrel{\Psi}{\lra}(\Omega_d, P_d)
    \end{split}
\end{equation}
we assume that
\begin{equation}\label{f4}
    \nu^{[d]}_x=\int_{\Omega_d}\nu_\omega\ dP_d(\omega)
\end{equation}
is the disintegration of $\nu^{[d]}_x$ over $\Omega_d$.

Let $\pi^{[d+1]}_d: (\overline{\F^{[d+1]}}(x^{[d+1]}),
\F^{[d+1]})\lra (\overline{\F^{[d+1]}}(\hat{x}^{[d+1]}),\F^{[d+1]})$
be the natural factor map. By Theorem \ref{HK-Zk},
$(\overline{\F^{[d+1]}}((\hat{x})^{[d+1]}), \rho_{d+1,\hat{x}})$ is
uniquely ergodic. Let
\begin{equation*}
\begin{split}
    (\pi^{[d+1]}_{d})_*(\ll) =\rho_{d+1,\hat{x}}=\int_{\overline{\F^{[d]}}(\hat{x}^{[d]})}
    \d_z \times \eta_z \
    d\rho_{d,x}(z).
\end{split}
\end{equation*}
be the disintegration of $\rho_{d+1,x}$ over
$\overline{\F^{[d]}}(\hat{x}^{[d]})$. By (\ref{f3}), we have
\begin{equation*}
\begin{split}
     (\pi^{[d+1]}_d)_*(\ll) =\rho_{d+1,\hat{x}}=\int_{\overline{\F^{[d]}}(\hat{x}^{[d]})}
    \d_z \times \eta_z \
    d\rho_{d,x}(z)=\int_{\Omega_d}\rho_{\omega}\times \mu^{[d]}_{d,\omega} \
    dP_d(\omega),
\end{split}
\end{equation*}
where $\rho_{d,\hat{x}}=\int_{\Omega_d}\rho_\omega \ dP_d(\omega)$
is the disintegration of $\rho_{d,\hat{x}}$ over $P_d$. Then
\begin{equation}\label{f6}
    (\pi^{[d]}_d)_*(\ll_\omega)=\rho_\omega, \ \text{and}\ (\pi^{[d]}_d)_*(\mu_\omega^{[d]})
=\mu_{d,\omega}^{[d]}.
\end{equation}

Since $(\pi^{[d]}_d)_*(\nu^{[d]}_x)=\rho_{d,\hat{x}}$, by (\ref{f4})
we have $(\pi^{[d]}_d)_*(\nu_\omega)=\rho_\omega$. Hence by the
uniqueness of disintegration, we have that $\ll_\omega =\nu_\omega$,
$P_d$ a.e.. Thus
\begin{equation}\label{limit-C1}
\ll_{\F;d+1}=\ll=\int_{\Omega_d}\ll_\omega\times \mu_\omega^{[d]} \ dP_d(\omega)
=\int_{\Omega_d}\nu_\omega\times \mu_\omega^{[d]} \ dP_d(\omega).
\end{equation}
That is, $\ll$ is unique and hence
$(\overline{\F^{[d+1]}}(x^{[d+1]}), \F^{[d+1]})$ is uniquely
ergodic. The proof is completed. \hfill $\square$

\section{Proof of Theorem B}\label{sec-AP}
In this section we show Theorem B. We start from the case when $(X,\X, \mu, T)$
is weakly mixing.

\subsection{Preparation}

Let $T: X\rightarrow X$ be a map and $d\in \N$. Set
$$\tau_d=T\times \ldots\times T \ (d \ \text{times}),$$
$$\sigma_d=T\times \ldots \times T^{d}$$ and
$$\sigma_d'=\id \times T\times \ldots \times T^{d-1}=\id\times \sigma_{d-1}.$$
Note that $\langle\tau_d, \sigma_d\rangle=\langle\tau_d,\sigma_d'\rangle$.
For any $x\in \h{X}$, let $N_d(\h{X},x)=\overline{\O((x,\ldots,x),
\langle\tau_d, \sigma_d\rangle)}$, the orbit closure of
$(x,\ldots,x)$ ($d$ times) under the action of the group
$\langle\tau_d, \sigma_d\rangle$. We remark that if $(\h{X},T)$ is
minimal, then all $N_d(\h{X},x)$ coincide, which will be denoted by
$N_d(\h{X})$. It was shown by Glasner \cite{G94} that if $(\h{X},T)$
is minimal, then $(N_d(\h{X}), \langle\tau_d, \sigma_d\rangle)$ is
minimal. Hence if $(N_d(X), \langle\tau_d, \sigma_d\rangle)$ is uniquely ergodic, then it is strictly
ergodic.

\begin{de}\label{de-Furstenberg-selfjoining}
Let $(X,T)$ be a minimal system with $\mu\in M_T(X)$ and $d\ge 1$.
If $(N_d(X), \langle\tau_d,\sigma_d\rangle)$ is uniquely ergodic, then we denote
the unique measure by $\mu^{(d)}$, and call it the {\em Furstenberg
selfjoining}.
\end{de}

Since $(N_d(X), \langle\tau_d,\sigma_d\rangle)$ is uniquely ergodic, it is easy
to see that
\begin{equation*}
\frac {1}{N} \sum_{n=0}^{N-1} \sigma_d^n \mu_\D^d\longrightarrow
\mu^{(d)},\ N\to \infty, \quad \text{weakly in $M(X^d)$},
\end{equation*}
where $\mu_\D^d$ is the diagonal measure on $X^d$ as defined in
\cite{F77}, i.e. it is defined on $X^d$ as follows
\begin{equation*}
    \int_{X^d} f_1(x_1)f_2(x_2)\ldots f_d(x_d)\ d \mu_\D^d(x_1,x_2,\ldots,x_d)=
    \int_X f_1(x)f_2(x)\ldots f_d(x)\ d\mu(x).
\end{equation*}
 Note that if $(N_d(X), \langle\tau_d,\sigma_d\rangle)$ is not
uniquely ergodic, we still can define $\mu^{(d)}$, i.e. generally
one may define $\mu^{(d)}$ as a weak limit point of sequence
$\{\frac {1}{N} \sum_{n=0}^{N-1} \sigma_d^n \mu_\D^d\}$ in $M(X^d)$.
In this case one may have lots of choices for $\mu^{(d)}$.



\subsection{Weakly mixing systems}

In this section we show Theorem B holds for weakly
mixing systems.

\begin{prop}\label{prop-wm-AP}
Let $(X,\X, \mu, T)$ be a weakly mixing dynamical system and $d\in
\N$. If $(X,\X, \mu, T)$ is uniquely ergodic, then $(X^{d}, \X^{d},
\mu^{d}, \langle\tau_d, \sigma_d\rangle)$ is uniquely ergodic, where $\mu^{d}=
\underbrace{\mu\times \ldots \times \mu}_{d\
  \text{times}}$.
\end{prop}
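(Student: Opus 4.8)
The plan is to argue by induction on $d$, mirroring the disintegration method of Proposition \ref{prop-wm-Fd} but exploiting the alternative generator $\sigma_d'=\id\times\sigma_{d-1}$, which fixes the first coordinate and satisfies $\langle\tau_d,\sigma_d\rangle=\langle\tau_d,\sigma_d'\rangle$. The base case $d=1$ is immediate, since $\langle\tau_1,\sigma_1\rangle=\langle T\rangle$ and unique ergodicity of $(X,T)$ gives $\mu$ as the only invariant measure. For the inductive step I assume the statement for $d-1$ and let $\ll\in M(X^d)$ be $\langle\tau_d,\sigma_d'\rangle$-invariant.

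First I would project onto the last $d-1$ coordinates via $p:(x_1,\ldots,x_d)\mapsto(x_2,\ldots,x_d)$. Since $\tau_d$ and $\sigma_d'=\id\times\sigma_{d-1}$ induce, respectively, $\tau_{d-1}$ and $\sigma_{d-1}$ on these coordinates, $p_*\ll$ is $\langle\tau_{d-1},\sigma_{d-1}\rangle$-invariant, so the induction hypothesis forces $p_*\ll=\mu^{d-1}$. I would then disintegrate
\begin{equation*}
\ll=\int_{X^{d-1}}\ll_{\bf y}\times\d_{\bf y}\,d\mu^{d-1}({\bf y}),
\end{equation*}
where ${\bf y}=(x_2,\ldots,x_d)$ and each $\ll_{\bf y}\in M(X)$ lives on the first coordinate. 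Because $\sigma_d'$ fixes the first coordinate and acts as $\sigma_{d-1}$ on ${\bf y}$, pushing this disintegration forward by $\sigma_d'$ and using $(\sigma_{d-1})_*\mu^{d-1}=\mu^{d-1}$ gives $\sigma_d'\ll=\int \ll_{\sigma_{d-1}^{-1}{\bf y}}\times\d_{\bf y}\,d\mu^{d-1}({\bf y})$; comparing with $\ll$ and invoking uniqueness of the disintegration yields $\ll_{\sigma_{d-1}{\bf y}}=\ll_{\bf y}$ for $\mu^{d-1}$-a.e.\ ${\bf y}$.

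The crux is then to upgrade this invariance to constancy, and this is the one place the weak mixing hypothesis is essential. Here I would use that $\sigma_{d-1}=T\times T^2\times\cdots\times T^{d-1}$ is ergodic on $(X^{d-1},\mu^{d-1})$: since $(X,\mu,T)$ is weakly mixing, each power $(X,\mu,T^j)$ is weakly mixing, and a finite product of weakly mixing systems is weakly mixing, hence ergodic. I regard verifying this ergodicity as the main (though standard) obstacle. Granting it, the measurable map ${\bf y}\mapsto\ll_{\bf y}$ is $\sigma_{d-1}$-invariant and therefore $\mu^{d-1}$-a.e.\ equal to a constant $\nu\in M(X)$ (test against a countable dense family of continuous functions), so $\ll=\nu\times\mu^{d-1}$.

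Finally I would pin down $\nu$ using the remaining generator: $\tau_d=T\times\tau_{d-1}$ gives $\tau_d\ll=T_*\nu\times\mu^{d-1}$, so $\tau_d$-invariance forces $T_*\nu=\nu$, and unique ergodicity of $(X,T)$ then yields $\nu=\mu$, whence $\ll=\mu^d$. This shows $\mu^d$ is the only $\langle\tau_d,\sigma_d\rangle$-invariant measure, completing the induction. Note that the two inputs play distinct roles: the inductive hypothesis identifies the marginal on the last $d-1$ coordinates, while the weak mixing (through ergodicity of $\sigma_{d-1}$) is what collapses the conditional measures to a single $\nu$.
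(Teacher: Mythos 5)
Your proposal is correct and follows essentially the same route as the paper: induction on $d$, projection onto the last $d-1$ coordinates to identify the marginal as $\mu^{d-1}$, disintegration over $\mu^{d-1}$, the observation that $\sigma_d'$-invariance makes ${\bf y}\mapsto\ll_{\bf y}$ a $\sigma_{d-1}$-invariant measure-valued function which is constant by ergodicity of $T\times T^2\times\cdots\times T^{d-1}$ (from weak mixing), and finally unique ergodicity of $(X,T)$ to pin down $\nu=\mu$. The only cosmetic difference is that the paper identifies $\nu$ as the $T$-invariant marginal $(p_1)_*\ll$ rather than pushing the product form through $\tau_d$, which amounts to the same thing.
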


\begin{proof}
We prove the result inductively. It is trivial when $d=1$, since
$\tau_1=\sigma_1=T$.

Now assume the statements hold for $d-1$ ($d\ge 2$), and we show the
case for $d$. Let $\ll$ be a $\langle\tau_d, \sigma_d\rangle$-invariant measure
of $(X^{d}, \X^{d})$. Let $$p_1: X^d=X\times X^{d-1}\rightarrow X; \
{\bf x} =(x_1, {\bf x'})\mapsto x_1$$
$$p_2: X^d=X\times X^{d-1} \rightarrow X^{d-1}; \ {\bf x} =(x_1, {\bf x'})\mapsto {\bf x'}$$  be
the projections. Note that $(p_2)_*(\ll)$ is a
$\langle\tau_{d-1},\sigma_{d-1}\rangle$-invariant measure of $X^{d-1}$. By
inductive assumption, $(p_2)_*(\ll)=\mu^{d-1}$. Let
\begin{equation}\label{b3}
\ll =\int \ll_{{\bf x}}\times \d_{\bf x}\ d \mu^{d-1}({\bf x})
\end{equation}
be the disintegration of $\ll$ over $\mu^{d-1}$. Since $\ll$ is
$\sigma_d'=\id \times \sigma_{d-1}$-invariant, we have
\begin{eqnarray*}
    \ll &= &\sigma'_d \ll=\id \times \sigma_{d-1} \ll=
    \int \ll_{\bf x}\times \sigma_{d-1}\d_{\bf x} \ d \mu^{d-1}({\bf x})\
    \\ & = & \int \ll_{\bf x}\times \d_{\sigma_{d-1}\bf x} \ d \mu^{d-1}({\bf x})\\
    \\&=& \int \ll_{(\sigma_{d-1})^{-1}\bf x}\times \d_{\bf x} \ d \mu^{d-1}({\bf x}).
\end{eqnarray*}
The uniqueness of disintegration implies that
\begin{equation}\label{b4}
    \ll_{(\sigma_{d-1})^{-1}\bf x}=\ll_{\bf x},\quad \mu^{d-1} \ a.e.
\end{equation}

Define $$F: (X^{d-1},\X^{d-1},\sigma_{d-1}) \lra M(X): \ {\bf
x}\mapsto \ll_{\bf x}.$$ By (\ref{b4}), $F$ is a
$\sigma_{d-1}$-invariant $M(X)$-value function. Since $(X,\X,\mu,
T)$ is weakly mixing,
$(X^{d-1},\X^{d-1},\sigma_{d-1})=(X^{d-1},\X^{d-1}, T\times
T^2\times\ldots \times T^{d-1})$ is ergodic and hence $\ll_{\bf
x}=\nu, \ \mu^{d-1}\ $ a.e. for some $\nu\in M(X)$. Thus by
(\ref{b3}) one has that
$$\ll=\int  \ll_{\bf x}\times \d_{\bf x}\ d\mu^{d-1}({\bf x})=
\int \nu \times\d_{\bf x}\ d\mu^{d-1}({\bf x})= \nu\times\mu^{d-1}
.$$ Then we have that $\nu=(p_1)_*(\ll)$ is a $T$-invariant measure
of $X$. By assumption, $\nu=(p_1)_*(\ll)=\mu$. Thus $\ll=\mu\times
\mu^{d-1}=\mu^{d}$. Hence $(X^{d}, \X^{d}, \mu^{d}, \langle\tau_d,
\sigma_d\rangle)$ is uniquely ergodic. The proof is completed.
\end{proof}

\begin{thm}\label{Thm-wm-AP}
If $(X,T)$ is a t.d.s. and $(X,\X, \mu, T)$ is weakly mixing, then it has
a $\langle\tau_d, \sigma_d\rangle-$strictly ergodic model for all $d\in\N$.
\end{thm}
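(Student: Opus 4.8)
The plan is to deduce the theorem essentially as a corollary of Proposition \ref{prop-wm-AP}, using Jewett--Krieger's Theorem to reduce to the uniquely ergodic setting and then upgrading unique ergodicity of the full product to strict ergodicity of the relevant orbit closure.

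First I would invoke Jewett--Krieger's Theorem: since $(X,\X,\mu,T)$ is ergodic it has a strictly ergodic model, so without loss of generality I may assume that $(X,T)$ is itself a minimal t.d.s.\ and that $\mu$ is its unique $T$-invariant measure. In particular $\mu$ has full support. As $(X,\X,\mu,T)$ is weakly mixing, I am then exactly in the hypotheses of Proposition \ref{prop-wm-AP}, which yields that $(X^d,\X^d,\mu^d,\langle\tau_d,\sigma_d\rangle)$ is uniquely ergodic, the unique invariant measure being the product measure $\mu^d=\mu\times\cdots\times\mu$.

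The only content beyond Proposition \ref{prop-wm-AP} is to promote unique ergodicity of $X^d$ to strict ergodicity of $N_d(X)$. Here I would exploit that $\mu^d$ has full support $X^d$, since $\mu$ does. For any $y\in X^d$ the orbit closure $\overline{\O(y,\langle\tau_d,\sigma_d\rangle)}$ is a nonempty compact invariant set; because $\langle\tau_d,\sigma_d\rangle$ is abelian, hence amenable, it carries an invariant probability measure, which by unique ergodicity must be $\mu^d$. Thus $\mathrm{supp}(\mu^d)=X^d$ is contained in this orbit closure, forcing $\overline{\O(y,\langle\tau_d,\sigma_d\rangle)}=X^d$. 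Hence $(X^d,\langle\tau_d,\sigma_d\rangle)$ is minimal; taking $y=(x,\ldots,x)$ shows $N_d(X)=X^d$, so that $(N_d(X),\langle\tau_d,\sigma_d\rangle)$ is both minimal and uniquely ergodic, i.e.\ strictly ergodic, and $(X,T)$ is the desired $\langle\tau_d,\sigma_d\rangle$-strictly ergodic model.

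I expect the main obstacle to be conceptual rather than computational: unlike the case $d=2$, where $\tau_2^{-1}\sigma_2=\mathrm{id}\times T$ together with $\tau_2$ already generates the full independent $\Z^2$-action on $X^2$ (so that $N_2(X)=X^2$ follows from a Cartesian-product argument for any minimal $(X,T)$), for $d\ge 3$ the group $\langle\tau_d,\sigma_d\rangle$ realizes only the rank-two sublattice of arithmetic-progression exponents $(a+b,a+2b,\ldots,a+db)$ in $\Z^d$. One therefore cannot read off $N_d(X)=X^d$ directly and must instead obtain minimality from the full support of $\mu^d$ together with unique ergodicity, exactly as above. (Alternatively one could cite Glasner's minimality of $(N_d(X),\langle\tau_d,\sigma_d\rangle)$ and then transfer unique ergodicity from $X^d$ to the closed invariant subset $N_d(X)$; the full-support argument is self-contained and avoids that appeal.)
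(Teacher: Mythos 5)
Your proof is correct and follows essentially the same route as the paper: Jewett--Krieger's Theorem to reduce to a strictly ergodic $(X,T)$, then Proposition~\ref{prop-wm-AP} to obtain unique ergodicity of $(X^d,\langle\tau_d,\sigma_d\rangle)$ with unique measure $\mu^d$. The paper leaves the final upgrade to strict ergodicity of $N_d(X)$ implicit, relying on its earlier remark that Glasner's minimality theorem turns a uniquely ergodic $N_d(X)$ into a strictly ergodic one, whereas your full-support argument makes this step self-contained and in addition shows $N_d(X)=X^d$; both justifications are valid.
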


\begin{proof}
By Jewett-Krieger's Theorem , $(X,\X,
\mu, T)$ has a uniquely ergodic model. Without loss of generality,
we may assume that $(X,T)$ itself is a topological minimal system
and $\mu$ is its unique $T$-invariant measure. By Proposition
\ref{prop-wm-AP}, $(X^d, \langle\tau_d,\sigma_d\rangle)$ is uniquely ergodic for
all $d\in \N$. Hence it has a $d$-arithmetic progression strictly
ergodic model.
\end{proof}

\subsection{Nilsystems under action $\langle\tau_d, \sigma_d\rangle$}

Before going on, we need some results on nilsystems under action $\langle\tau_d, \sigma_d\rangle$.

\subsubsection{Basic properties}

In this subsection $d\ge 2$ is an integer, and $(X =
G/\Gamma,\mu_{d-1}, T )$ is an ergodic $(d-1)$-step nilsystem and
the transformation $T$ is translation by the element $t\in G$. Let
$$N_d=N_d(X)=\overline{\O(\Delta_d({X}),
\sigma_d)}=\overline{\O((x,\ldots,x), \langle\tau_d, \sigma_d\rangle)}\subset
X^d$$ and $$N_d[x]=\overline{\O((x,\ldots,x), \sigma'_d)},$$ where
$x\in X$. Then we have

\begin{thm}\cite{BHK05, Z05}\label{ziegler}
With the notations above, we have
\begin{enumerate}
  \item The $(N_d, \langle\tau_d,\sigma_d\rangle)$ is ergodic (and thus uniquely
  ergodic) with some measure $\mu^{(d)}_{d-1}$.
  \item For $\mu$-almost every $x\in X$, the system $(N_d[x],\sigma_d')$ is
uniquely ergodic with some measure $\mu^{(d)}_{d-1,x}$.
  \item $\displaystyle \mu^{(d)}_{d-1} = \int_X \d_x\times \mu^{(d)}_{d-1,x}\
  d\mu(x)$.
  \item (Ziegler) Let $f_1, f_2, \ldots , f_{d-1}$ be continuous functions on
$X$ and let $\{M_i\}$ and $\{N_i\}$ be two sequences of integers
such that $N_i\to \infty$. For $\mu$-almost every $x\in X$,
\begin{equation}
\begin{split}
\frac{1}{N_i} \sum_{n=M_i}^{N_i+M_i-1}
   &f_1(T^nx)f_2(T^{2n}x)\ldots f_{d-1}(T^{(d-1)n}x)\\
   \rightarrow & \int f_1(x_1)f_2(x_2)\ldots f_{d-1}(x_{d-1})\
   d\mu^{(d)}_{d-1,x}(x_1,x_2,\ldots,x_{d-1})
\end{split}
\end{equation}
as $i\to \infty$.
\end{enumerate}
\end{thm}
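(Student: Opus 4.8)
The plan is to reduce everything to the structure theory of nilsystems, exploiting the fact that the acting group consists of \emph{commuting} nilrotations. Writing $X=G/\Gamma$ and identifying $X^d$ with $G^d/\Gamma^d$, the maps $\tau_d$ and $\sigma_d'=\id\times\sigma_{d-1}$ are translations by $(t,\ldots,t)$ and $(e,t,\ldots,t^{d-1})$; in each coordinate these are powers of the single element $t$, so they commute, $\langle\tau_d,\sigma_d\rangle=\langle\tau_d,\sigma_d'\rangle$ is an abelian group of translations on the nilmanifold $X^d$ (a quotient of $\Z^2$), and one checks directly that $\sigma_d=\tau_d\sigma_d'$. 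All four statements will then be read off from equidistribution on nilmanifolds.

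For (1) and (2) I would invoke the orbit-closure theorem for nilsystems (Parry, Lesigne, Leibman): the closure of the orbit of a point under a group of translations on a nilmanifold is a sub-nilmanifold on which the action is minimal, hence, since minimality equals unique ergodicity for nilmanifold translations, uniquely ergodic with the Haar measure of that sub-nilmanifold. Applied to the point $(x,\ldots,x)$ and the group $\langle\tau_d,\sigma_d\rangle$ this gives (1), with $\mu^{(d)}_{d-1}$ the Haar measure of $N_d$; applied to the single nilrotation $\sigma_d'$ it gives (2), where $N_d[x]=\{x\}\times\overline{\O((x,\ldots,x),\sigma_{d-1})}$ because $\sigma_d'$ fixes the first coordinate. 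This in fact holds for every $x$; the almost-everywhere formulation is all that is needed afterwards.

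For (3) I would first record the $T$-equivariance $\tau_d\,\mu^{(d)}_{d-1,x}=\mu^{(d)}_{d-1,Tx}$, which holds because $\tau_d$ commutes with $\sigma_d'$ and carries $N_d[x]$ onto $N_d[Tx]$, hence carries the unique $\sigma_d'$-invariant measure of one onto that of the other. Writing $\mu^{(d)}_{d-1,x}=\delta_x\times\nu_x$ and using $T$-invariance of $\mu$, this equivariance makes $\int_X\mu^{(d)}_{d-1,x}\,d\mu(x)$ invariant under $\sigma_d'$ (each fibre measure already is) and under $\tau_d$, hence under the whole group; by the unique ergodicity from (1) it must equal $\mu^{(d)}_{d-1}$, which is (3). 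Alternatively one can disintegrate $\mu^{(d)}_{d-1}$ over its first coordinate, which projects to $\mu$ by unique ergodicity of $(X,T)$, and identify the fibre measures as the unique $\sigma_d'$-invariant measures on $N_d[x]$ using (2).

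For (4) the point is that unique ergodicity upgrades Ces\`aro convergence to convergence that is uniform over the starting point of the window. Setting $F=f_1\otimes\cdots\otimes f_{d-1}$ and $\tilde x=(x,\ldots,x)\in X^{d-1}$, the average in question is $\frac1{N_i}\sum_{n=M_i}^{M_i+N_i-1}F(\sigma_{d-1}^n\tilde x)$. By (2) the system $(N_d[x],\sigma_d')$ is uniquely ergodic for a.e.\ $x$, so $\frac1N\sum_{n=0}^{N-1}F(\sigma_{d-1}^n\,\cdot\,)$ converges uniformly to $\int F\,d\mu^{(d)}_{d-1,x}$; applying this at the base point $\sigma_{d-1}^{M_i}\tilde x$ and letting $N_i\to\infty$ gives the limit independently of the window $M_i$. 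The main obstacle is the structural input behind (1) and (2): everything rests on the theorem that linear orbit closures on nilmanifolds are uniquely ergodic sub-nilmanifolds, together with the window-uniform equidistribution of nilsequences, which are precisely the results of \cite{BHK05, Z05} being quoted here. Granting those, the remaining work is the bookkeeping in (3) identifying the fibre measures and the shift trick in (4).
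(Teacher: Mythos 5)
The paper offers no proof of this statement: it is quoted verbatim from \cite{BHK05, Z05}, so there is no internal argument to compare yours against. That said, your outline is a faithful reconstruction of how those sources obtain it, and the reductions you supply are correct. Points (1) and (2) are exactly the orbit-closure and unique-ergodicity theorems for groups of commuting translations on nilmanifolds (the paper records the relevant special cases in its appendix, Theorem \ref{nilsystem} and the theorem following it), and you are right to flag that this structural input is the actual content being cited rather than something to be re-derived. Your derivation of (3) from the equivariance $\tau_d\,\mu^{(d)}_{d-1,x}=\mu^{(d)}_{d-1,Tx}$ (valid since $\tau_d$ and $\sigma_d'$ are coordinatewise powers of $t$ and hence commute, with $\tau_d N_d[x]=N_d[Tx]$) together with unique ergodicity from (1) is the standard argument and is sound; note only that one must observe $N_d[x]\subset N_d$ so that the averaged measure is supported where (1) applies, and that the paper's notation silently identifies the measure $\mu^{(d)}_{d-1,x}$ on $\{x\}\times X^{d-1}$ with its projection to the last $d-1$ coordinates, which your factorization $\delta_x\times\nu_x$ handles correctly. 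The shift trick in (4) --- uniform convergence of Birkhoff averages on the uniquely ergodic system $(N_d[x],\sigma_d')$ applied at the moving base point $\sigma_{d-1}^{M_i}\tilde x$ --- is likewise the standard way to get window-independent limits. In short: nothing to object to, with the caveat that the heart of the matter is delegated to the cited nilsystem theory, exactly as the paper itself delegates it.
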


\subsubsection{The ergodic decomposition of $\mu^{(d)}_{d-1}$ under $\sigma_d$}

Now we study the ergodic decomposition of $\mu^{(d)}_{d-1}$ under
$\sigma_d$. For each $x\in X$, let $\nu^{(d)}_{d-1,x}$ be the unique
$\sigma_d$-invariant measure on $\overline{\O(x^d,\sigma_d)}$, where
$x^d=(x,x,\ldots,x)\in X^d$. Then
$$\varphi: X \longrightarrow
M(N_d);\ \ \ x\mapsto \nu^{(d)}_{d-1,x}$$ is a measurable map. This
fact follows from that $x \mapsto
\frac{1}{N}\sum_{n<N}\d_{\sigma_d^n x^d}$ is continuous and
$\frac{1}{N}\sum_{n<N}\d_{\sigma_d^nx^d}$ converges to
$\nu^{(d)}_{d-1,x}$ weakly. Hence we have
\begin{equation}\label{h9}
    \mu^{(d)}_{d-1}=\int_{X} \nu^{(d)}_{d-1,x} \ d \mu(x).
\end{equation}
In fact, it is easy to check that $\displaystyle \int_{X}
\nu^{(d)}_{d-1,x} \ d \mu(x)$ is $\langle\tau_d,\sigma_d\rangle$-invariant and
hence it is equal to $\mu^{(d)}_{d-1}$ by the uniqueness. Now we
show that (\ref{h9}) is the ``ergodic decomposition'' of
$\mu^{(d)}_{d-1}$ under $\sigma_d$. It is left to show that
$\nu^{(d)}_{d-1,x}\neq \nu^{(d)}_{d-1,y}$ whenever $x\neq y$. This
result will follows from the following fact:
$\overline{\O(x^d,\sigma_d)}\cap
\overline{\O(y^d,\sigma_d)}=\emptyset$ for all $x\neq y$.
In fact, if $\overline{\O(x^d,\sigma_d)}\cap
\overline{\O(y^d,\sigma_d)}\not =\emptyset$, then $y^d\in \overline{\O(x^d,\sigma_d)}$ since
both $\overline{\O(x^d,\sigma_d)}$ and $\overline{\O(y^d,\sigma_d)}$ are minimal.
This means that $(x,y,y,\ldots,y)\in \Q^{[d]}(X)$. Hence $x=y $ by
\cite[Theorem 1.2]{HKM}.

To sum up, we have

\begin{prop}
The algebra $\I(Z_{d-1}^d, \ZZ_{d-1}^d, \mu_{d-1}^{(d)}, \sigma_d)$ of invariant sets under $\sigma_d$ is isomorphic to $\ZZ_{d-1}$.
\end{prop}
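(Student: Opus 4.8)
The plan is to recognize the statement as a direct consequence of the ergodic decomposition (\ref{h9}), once we know that it is a \emph{genuine} ergodic decomposition rather than a mere integral representation. The guiding principle is the general fact that, for a measure-preserving action on a Lebesgue probability space, the $\sigma$-algebra of invariant sets agrees modulo null sets with the factor onto the space of ergodic components; hence it suffices to identify that space of components with $(Z_{d-1},\mu_{d-1})$. Since here $(X=G/\Gamma,\mu_{d-1},T)$ is a $(d-1)$-step nilsystem we have $Z_{d-1}=X$ and $\ZZ_{d-1}=\X$, and $\mu^{(d)}_{d-1}$ is carried by $N_d\subseteq Z_{d-1}^d$, so the invariant $\sigma$-algebra in question is that of $(N_d,\mu^{(d)}_{d-1},\sigma_d)$.

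First I would record the two ingredients already assembled above. The map $x\mapsto\nu^{(d)}_{d-1,x}$ is a measurable map from $(X,\mu)=(Z_{d-1},\mu_{d-1})$ into $M(N_d)$, and by (\ref{h9}) one has $\mu^{(d)}_{d-1}=\int_X \nu^{(d)}_{d-1,x}\,d\mu(x)$ with each $\nu^{(d)}_{d-1,x}$ ergodic under $\sigma_d$. Next I would use the disjointness $\overline{\O(x^d,\sigma_d)}\cap\overline{\O(y^d,\sigma_d)}=\emptyset$ for $x\neq y$: this makes the measures $\nu^{(d)}_{d-1,x}$ pairwise mutually singular, so $x\mapsto\nu^{(d)}_{d-1,x}$ is injective and (\ref{h9}) is genuinely the ergodic decomposition of $\mu^{(d)}_{d-1}$ under $\sigma_d$. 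A measurable bijection between Lebesgue probability spaces has a measurable inverse, so $x\mapsto\nu^{(d)}_{d-1,x}$ is a measure isomorphism of $(X,\mu)$ onto the space of ergodic components, the push-forward of $\mu^{(d)}_{d-1}$ matching $\mu$ because $\int_X\delta_x\,d\mu(x)=\mu$. Invoking the principle above then gives $\I(Z_{d-1}^d,\ZZ_{d-1}^d,\mu^{(d)}_{d-1},\sigma_d)\cong\ZZ_{d-1}$; if one wants the identification to respect dynamics, note that $\tau_d$ sends $\overline{\O(x^d,\sigma_d)}$ to $\overline{\O((Tx)^d,\sigma_d)}$, so the induced $\tau_d$-action on components is conjugate to $T$ on $Z_{d-1}$.

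The main obstacle is not the final bookkeeping but establishing that (\ref{h9}) is a true ergodic decomposition, i.e. the mutual singularity (equivalently injectivity) of the family $\{\nu^{(d)}_{d-1,x}\}$, which is exactly what forces the parameter space to be all of $Z_{d-1}$ and not a proper quotient. This rests on the disjointness of the orbit closures $\overline{\O(x^d,\sigma_d)}$, and that in turn uses the nontrivial rigidity fact that $(x,y,\ldots,y)\in\Q^{[d]}(X)$ forces $x=y$ (\cite[Theorem 1.2]{HKM}). A secondary point to keep in mind is that the ergodic-component map is \emph{not} any coordinate projection of $N_d$ -- by minimality the first-coordinate marginal of each $\nu^{(d)}_{d-1,x}$ is all of $\mu$ -- so the isomorphism with $\ZZ_{d-1}$ is realized through the abstract indexing of the components, not through a coordinate of $X^d$.
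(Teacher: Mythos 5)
Your argument is correct and follows essentially the same route as the paper: you take the integral representation $\mu^{(d)}_{d-1}=\int_X \nu^{(d)}_{d-1,x}\,d\mu(x)$ with each fiber measure ergodic, and upgrade it to a genuine ergodic decomposition by proving the orbit closures $\overline{\O(x^d,\sigma_d)}$ are pairwise disjoint via the rigidity fact that $(x,y,\ldots,y)\in\Q^{[d]}(X)$ forces $x=y$ (\cite[Theorem 1.2]{HKM}), exactly as in the text preceding the proposition. Your additional remarks (mutual singularity from disjoint supports, the measurable-inverse bookkeeping, and the $\tau_d$-equivariance of the component map) only make explicit what the paper leaves implicit.
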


\subsection{Proof of Theorem B} Let $(X,T)$ be a strictly ergodic
system and let $\mu$ be its unique $T$-invariant measure.

\subsubsection{Case when $d=1$}\label{AP-d=1}

Now $X^1=X, \tau_1=T, \sigma_1=T$ and $\sigma'_1=\id$. It is trivial
in this case.

\subsubsection{Case when $d=2$}\label{AP-d=2}

In this case $X^2=X\times X$, $\tau_2=T\times T$, $\sigma_2=T\times
T^2$ and $\sigma_2'=\id \times T$. Note that
$\langle\tau_2,\sigma_2\rangle=\G^{[1]}$. Hence it is the same to subsection
\ref{d=1}. In this case $N_2(X)=X\times X$, and its
$\langle\tau_2,\sigma_2\rangle$-uniquely ergodic measure is $\mu\times \mu$.

\subsubsection{Case when $d=3$}

Let $\pi_1: X\rightarrow Z_1$ be the factor map from $X$ to its
Kronecker factor $Z_1$. Since $Z_1$ is a group rotation, it may be
regarded as a topological system in the natural way. By Weiss's Theorem, there is a uniquely ergodic model $(\h{X}, \h{\X},
\h{\mu}, T)$ for $(X,\X, \mu, T)$ and a factor map $\h{\pi_1}:
\h{X}\rightarrow Z_1$ which is a model for $\pi_1: X\rightarrow
Z_1$.

\[
\begin{CD}
X @>{}>> \h{X}\\
@V{\pi_1}VV      @VV{\h{\pi_1}}V\\
Z_1 @>{ }>> Z_1
\end{CD}
\]

\medskip

Hence for simplicity, we may assume that $(\h{X}, \h{\X}, \h{\mu},
T)= (X, \X, \mu, T)$ and $\pi_1=\h{\pi_1}$. Now we show that
$(N_3(X), \langle\tau_3, \sigma_3\rangle)$ is uniquely ergodic.

\bigskip

Before continuing we need some properties about the Kronecker factor
$(Z_1(X), t_1)$ of the ergodic system $(X, \mu, T)$. Recall that
$\mu_1$ is the Haar measure of $Z_1$.

For $s\in Z_1$, let $\xi_{1,s}$ denote the image of the measure
$\mu_1$ under the map $z \mapsto (z, sz^2)$ from $Z_1$ to $Z^2_ 1$.
This measure is invariant under $\sigma_2 = T\times T^2$ and is a
self-joining of the rotation $(Z_1, t_1)$. Let $\xi_s$ denote the
relatively independent joining of $\mu$ over $\xi_{1,s}$. This means
that for bounded measurable functions $f$ and $g$ on $X$,
\begin{equation*}
    \int_{Z_1\times Z_1}
f(x_0)g(x_1) \ d\xi_s (x_0, x_1)=\int_{Z_1} \E(f|\ZZ_1)(z)\E(g |
\ZZ_1)(sz^2) \ d\mu_1(z).
\end{equation*}
where we view the conditional expectations relative to $\ZZ_1$ as
functions defined on $Z_1$.

\medskip

\noindent {\bf Claim:} The invariant $\sigma$-algebra
$\I(\sigma_2)=\I(T\times T^2)$ of $(X \times X, \mu\times \mu, T
\times T)$ is isomorphic to $\ZZ_1$.

\medskip

\noindent {\em Proof of Claim:}
This is a classical result. Here we give a sketch of a proof and later we will give another proof
when we deal with the general case. First by Theorem \ref{thm-F1} we have $K(T^2)=K(T)$, and hence $Z_1$ is the Kronecker factor for both $(X,\X,\mu, T)$ and $(X,\X,\mu, T^2)$.
Let $q_1: (X,\X,\mu, T)\rightarrow (Z_1,\ZZ_1,\mu_1,T)$ and $q_2: (X,\X,\mu, T^2)\rightarrow (Z_1,\ZZ_1,\mu_1,T^2)$ be the factor maps. By Theorem \ref{thm-F3}, if $F\in L^2(X\times X, \mu\times \mu)$ is invariant under $T\times T^2$, then there exists a function $\Phi\in L^2(Z_1\times Z_1, \mu_1\times \mu_1)$ so that $F(x,y)=\Phi(q_1(x), q_2(y))$. That means $\I(X\times X, T\times T^2)$ is measurable with respect to $\ZZ_1\times \ZZ_1$. Hence $\I(X\times X, T\times T^2)=\I(Z_1\times Z_1, T\times T^2)$, which is isometric to $\ZZ_1$. This ends the proof of Claim.

\medskip

Let $\phi: (X\times X, \X\times \X)\rightarrow (\Omega_1, \I^{[1]}, P_1)$ be the
factor map and let $\psi : (\Omega_1, \I^{[1]}, P_1)\rightarrow (Z_1,\ZZ_1,\mu_1)$
be the isomorphic map. Hence we have
\begin{equation}\label{h3}
\begin{split}
    (X\times X, \X\times \X)&\stackrel{\phi}{\lra} (\Omega_1,
    \I^{[1]}, P_1)\stackrel{\psi}{\longleftrightarrow}
    (Z_1,\ZZ_1,\mu_1)\\
    (x,y)&\lra \phi(x,y) \longleftrightarrow s=\psi(\phi(x,y))
\end{split}
\end{equation}

From this, it is not difficult to deduce that the ergodic
decompositions of $\mu_1\times \mu_1$ and $\mu\times \mu$ under
$\sigma_2=T \times T^2$ can be written as
\begin{equation}\label{}
    \mu_1\times \mu_1=\int_{Z_1}\xi_{1,s}\ d \mu_{1}(s);\quad \mu\times \mu=\int_{Z_1}\xi_s\ d \mu_1(s).
\end{equation}
In particular, for $\mu_1$-almost every $s$, the measure $\xi_s$ is
ergodic for $\sigma_2=T \times T^2$.

\bigskip

Now we continue our proof for $d=3$. Let $\ll$ be a
$\langle\tau_3,\sigma_3\rangle$-invariant measure of $N_3(X)$. Let
$$p_1: (N_3(X), \langle\tau_3,\sigma_3\rangle)\rightarrow (X, T); \
(x_1,x_2,x_3)\mapsto x_1$$
$$p_2: (N_3(X), \langle\tau_3,\sigma_3\rangle)\rightarrow (N_2(X), \langle\tau_2, \sigma_2\rangle); \
(x_1,x_2,x_3)\mapsto (x_2,x_3)$$  be the projections. Then
$(p_2)_*(\ll)$ is a $\langle\tau_2,\sigma_2\rangle$-invariant measure of
$N_2(X)=X\times X$. By the case $d=2$, $(p_2)_*(\ll)=\mu\times \mu$.
Hence let
\begin{equation}\label{h1}
\ll =\int_{X^2} \ll_{{(x,y)}}\times \d_{( x,y)}\ d (\mu\times \mu)
(x,y)
\end{equation}
be the disintegration of $\ll$ over $\mu\times \mu$. Since $\ll$ is
$\sigma'_3=\id\times \sigma_2=\id\times T\times T^2$-invariant, we
have
\begin{eqnarray*}
    \ll &= &\id \times \sigma_2 \ll=
    \int_{X^2} \ll_{(x,y)}\times \sigma_2\d_{(x,y)} \ d \mu\times \mu({x,y})\
    \\ & = & \int_{X^2} \ll_{(x,y)}\times \d_{\sigma_2(x,y)} \ d \mu\times \mu(x,y)\\
    \\&=& \int_{X^2} \ll_{(\sigma_2)^{-1}(x,y)}\times \d_{(x,y)} \ d \mu\times\mu({x,y}).
\end{eqnarray*}
The uniqueness of disintegration implies that
\begin{equation}\label{h2}
    \ll_{(\sigma_2)^{-1}(x,y)}=\ll_{(x,y)},\quad \mu\times \mu \ a.e.
\end{equation}

Define $$F: (N_2(X)=X\times X, \sigma_2=T\times T^2) \lra M(X): \
{(x,y)}\mapsto \ll_{(x,y)}.$$ By (\ref{h2}), $F$ is a
$\sigma_2=T\times T^2$-invariant $M(X)$-value function. Hence $F$ is
$\I(\sigma_2)$-measurable, and hence
$\ll_{(x,y)}=\ll_{\phi(x,y)}=\ll_s, \ \mu\times \mu\ $ a.e., where
$\phi$ is defined in (\ref{h3}).

Thus by (\ref{h1}) one has that
\begin{equation*}
\begin{split}
    \ll& =\int_{X^2}  \ll_{(x,y)}\times \d_{(x,y)}\ d\mu\times \mu (x,y)=
\int_{X^2} \ll_{\phi(x,y)} \times\d_{(x,y)}\ d\mu\times \mu (x,y)\\
       &=\int_{Z_1}\int_{X^2}\ll_s\times \d_{(x,y)}\ d\xi_s(x,y)
       d\mu_1(s)\\
       &= \int_{Z_1}\ll_s\times \Big(\int_{X^2} \d_{(x,y)}\
       d\xi_s(x,y)\Big)d\mu_1(s)\\
       &=\int_{Z_1}\ll_s\times \xi_s \ d\mu_1(s)
\end{split}
\end{equation*}

Let $\pi_1^{3}: (N_3(X), \langle\tau_3,\sigma_3\rangle)\lra (N_3(Z_1),
\langle\tau_3,\sigma_3\rangle)$ be the natural factor map. By Theorem
\ref{ziegler}, $(N_3(Z_1), \langle\tau_3,\sigma_3\rangle,\mu^{(3)}_1)$ is
uniquely ergodic. Hence
\begin{equation*}
\begin{split}
    {\pi_1}^{3}_*(\ll) =\mu_1^{(3)}=\int_{Z_1}\d_{s}\times \mu^{(3)}_{1,s} \
    d\mu_1(s).
\end{split}
\end{equation*}
And
$${\pi_1}_*(\ll_s)=\d_s, \ \text{and}\ (\pi_1\times\pi_1)_*(\xi_s)=\mu^{(3)}_{1,s}.$$
Note that we have that
$$(p_1)_*(\ll)=\mu, \ \text{and}\ (p_2)_*(\ll)=\mu\times\mu,$$
and hence we have
$$\mu=\int_{Z_1}\ll_s\ d\mu_1(s).$$
Let $\mu=\int _{Z_1}\rho_s\ d\mu_1(s)$ be the disintegration of $\mu$
over $\mu_1$. Note that ${\pi_1}_*(\ll_s)={\pi_1}_*(\rho_s)=\d_s$, $\mu_1,
 a.e.$. Hence by the uniqueness of disintegration, we have that
$\ll_s=\rho_s$, $\mu_1$ a.e.. Thus
$$\ll=\int_{Z_1}\ll_s\times \xi_s \ d\mu_1(s)=\int_{Z_1}\rho_s\times \xi_s \ d\mu_1(s).$$
That is, $(N_3(X), \langle\tau_3, \sigma_3\rangle)$ is uniquely ergodic.

\subsubsection{Some preparations}

Before going into the proof of the general case, we need some preparations.
Recall the definition of $\mu^{(d)}$ after Definition
\ref{de-Furstenberg-selfjoining}.

\begin{lem}\label{AP-lem-vdc}
Let $(X,\X,\mu,T)$ be an ergodic system and $d\ge 1$ be an integer.
Assume that $f_1,\ldots,f_d\in L^\infty(X,\mu)$ with
$\|f_j\|_\infty\le 1$ for $j=1,\ldots,d$. Then
\begin{equation}\label{AP-VDC}
\lim_{N\to\infty}\Big\|
\frac{1}{N}\sum_{n=0}^{N-1}f_1(T^nx_1)f_2(T^{2n}x_2)\ldots
f_d(T^{dn}x_d) \Big\|_{L^2(\mu^{(d)})}\le \min_{1\le l\le d}\{l\cdot
\interleave f_l\interleave_d \}
\end{equation}
\end{lem}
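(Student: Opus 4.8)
\noindent\emph{Plan of proof.} The case $d=1$ is immediate, since then $N_1(X)=X$, $\mu^{(1)}=\mu$, and $\frac1N\sum_{n<N}f_1\circ T^n\to\int f_1\,d\mu$ in $L^2(\mu)$, whose norm is exactly $\interleave f_1\interleave_1$. So I assume $d\ge2$ and fix $l\in\{1,\dots,d\}$; it suffices to produce the single bound $l\cdot\interleave f_l\interleave_d$ and then minimize over $l$. Working in $L^2(\mu^{(d)})$, I set $u_n=\bigotimes_{j=1}^d\big(f_j\circ T^{jn}\big)$, a function on $X^d$ with $\|u_n\|_\infty\le1$, so that the quantity to be estimated is $\limsup_N\big\|\frac1N\sum_{n<N}u_n\big\|_{L^2(\mu^{(d)})}$ (that the genuine limit exists follows from the Host--Kra $L^2$ convergence theorem, but only the $\limsup$ bound is needed).

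\noindent\emph{Van der Corput and collapse of the correlations.} First I apply the Hilbert-space van der Corput inequality
\[
\limsup_N\Big\|\frac1N\sum_{n=0}^{N-1}u_n\Big\|^2\le\limsup_H\frac1H\sum_{h=0}^{H-1}\limsup_N\Big|\frac1N\sum_{n=0}^{N-1}\langle u_{n+h},u_n\rangle\Big|.
\]
Writing $F_{j,h}=(f_j\circ T^{jh})\cdot\overline{f_j}$, a direct computation gives $\langle u_{n+h},u_n\rangle=\int_{X^d}\big(\bigotimes_jF_{j,h}\big)\circ\sigma_d^{\,n}\,d\mu^{(d)}$; since $\mu^{(d)}$ is $\sigma_d$-invariant this equals $c_h:=\int_{X^d}\bigotimes_jF_{j,h}\,d\mu^{(d)}$, which is independent of $n$. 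Hence the inner $\limsup_N|\cdots|$ is simply $|c_h|$, and the problem reduces to estimating $\limsup_H\frac1H\sum_{h<H}|c_h|$.

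\noindent\emph{Bounding $c_h$ by a lower seminorm.} Using $\mu^{(d)}=\lim_M\frac1M\sum_m\sigma_d^m\mu_\D^d$ I rewrite $c_h=\lim_M\frac1M\sum_m\int_X\prod_jF_{j,h}(T^{jm}x)\,d\mu$. Choosing an index $i\ne l$ (possible as $d\ge2$) and changing variables $x\mapsto T^{-im}x$ places $F_{i,h}$ at exponent $0$; pulling it out by Cauchy--Schwarz in $x$ and applying the generalized von Neumann theorem of Host--Kra \cite{HK05} to the remaining length-$(d-1)$ average in $\{F_{j,h}\}_{j\ne i}$ (with distinguished index $l$) yields, since $\|F_{i,h}\|_\infty\le1$,
\[
|c_h|\le\interleave F_{l,h}\interleave_{d-1}=\interleave\Delta_{lh}f_l\interleave_{d-1},\qquad\text{where }\Delta_kf_l:=\overline{f_l}\cdot(f_l\circ T^{k})
\]
(conjugation leaves the seminorm unchanged).

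\noindent\emph{Sub-progression averaging and conclusion.} Put $a_k=\interleave\Delta_kf_l\interleave_{d-1}^{2^{d-1}}\ge0$; the defining recursion of the Host--Kra seminorms gives $\lim_H\frac1H\sum_{k<H}a_k=\interleave f_l\interleave_d^{2^d}$. As the $a_k$ are nonnegative, restricting to the progression $k=lh$ costs only the factor $l$: $\frac1H\sum_{h<H}a_{lh}\le\frac{l}{lH}\sum_{k<lH}a_k$, whence $\limsup_H\frac1H\sum_h a_{lh}\le l\,\interleave f_l\interleave_d^{2^d}$. Combining this with the previous steps and the power-mean inequality (convexity of $t\mapsto t^{2^{d-1}}$) gives $\limsup_N\big\|\frac1N\sum_nu_n\big\|^2\le l^{1/2^{d-1}}\interleave f_l\interleave_d^{2}$, i.e. the norm is at most $l^{1/2^d}\interleave f_l\interleave_d\le l\,\interleave f_l\interleave_d$; minimizing over $l$ finishes the proof. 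The main obstacle is the third step: identifying the integral against the self-joining $\mu^{(d)}$ with an honest multiple average on $X$ and controlling it by $\interleave\cdot\interleave_{d-1}$ with $l$ an arbitrary prescribed coordinate; when $\mu^{(d)}$ is only a weak-$*$ limit point the relevant identities must be read along its defining subsequence, which is automatic in the uniquely ergodic setting where the lemma is applied.
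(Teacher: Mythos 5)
You take a genuinely different route through the key estimate. The paper argues by induction on $d$: van der Corput, then Cauchy--Schwarz to peel off the first coordinate, then the inductive hypothesis applied to the remaining average against the image of $\mu^{(d+1)}$ under projection to the last $d$ coordinates (which is not literally $\mu^{(d)}$ but only some $\sigma_d$-invariant self-joining --- this is why the appendix restates the lemma for an arbitrary self-joining $\lambda$), and finally the sub-progression trick together with Lemma \ref{lemmaE1}. You instead observe that $\langle u_{n+h},u_n\rangle$ is independent of $n$ by $\sigma_d$-invariance, unwind the resulting constant $c_h$ as a genuine multiple ergodic average on $X$ via the Furstenberg-self-joining description of $\mu^{(d)}$ (which for $L^\infty$ rather than continuous functions needs the telescoping approximation through the marginals, all equal to $\mu$ --- worth a line), and then invoke a generalized von Neumann estimate to get $|c_h|\le \interleave\Delta_{lh}f_l\interleave_{d-1}$; the tail of your argument (nonnegativity of $a_k$, the factor-$l$ cost of the sub-progression $k=lh$, the power-mean inequality, Lemma \ref{lemmaE1}) coincides with the paper's. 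Your route is more economical and, if the middle step held with constant $1$, would even yield the sharper constant $l^{1/2^d}$; the paper's route is self-contained and never needs estimates for averages with general integer coefficients.

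That middle step is the soft spot. After the change of variables $x\mapsto T^{-im}x$, the surviving average is $\frac1M\sum_m\prod_{j\ne i}T^{(j-i)m}F_{j,h}$, whose exponents $j-i$ are general distinct nonzero integers (some negative), not the standard progression $1,\dots,d-1$. The Host--Kra theorem is stated for the standard progression and even there carries the coefficient-dependent constant $l$ rather than $1$; for general coefficients the constant degrades further, and its proof is itself an induction of exactly the kind the paper performs (one must also handle the fact that $\frac1M\sum_m T^{am}g$ converges to the projection onto $T^a$-invariant functions, which equals $\int g\,d\mu$ only when $T^a$ is ergodic). So your chain really gives $|c_h|\le C\,\interleave\Delta_{lh}f_l\interleave_{d-1}$ for some $C=C(d,l,i)$ not shown to be $1$, hence a final bound $C^{1/2}\,l^{1/2^d}\interleave f_l\interleave_d$ rather than $l\,\interleave f_l\interleave_d$; for $l=1$ there is no slack, so the inequality as stated is not fully recovered. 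This is harmless for the lemma's only use in the paper (Lemma \ref{AP-lem-vdc2} needs only that the bound vanish when $\interleave f_l\interleave_d=0$), but to get the stated constant you should either track $C$ through the general-coefficient von Neumann argument or run the induction directly on the joining as the paper does.
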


\begin{proof}
We proceed by induction. For $d=1$, by the Ergodic Theorem,
$$\|\frac{1}{N}\sum_{n=0}^{N-1}T^n f_1 \|_{L^2(\mu)}\to |\int f_1d\mu|=\HK f_1 \HK_1.$$
Let $d\ge 1$ and assume that (\ref{AP-VDC}) holds for $d$. Let
$f_1,\ldots,f_{d+1}\in L^\infty(\mu)$ with $\|f_j\|_\infty\le 1$ for
$j=1,\ldots,d+1$. Choose $l\in \{2,3,\ldots,d+1\}$. (The case $l=1$
is similar). Write
$$\xi_n=\bigotimes_{j=1}^{d+1}T^jf_j=f_1(T^nx_1)f_2(T^{2n}x_2)\ldots
f_{d+1}(T^{(d+1)n}x_{d+1}).$$

By the van der Corput lemma (Lemma \ref{vanderCoput}),
\begin{equation*}
    \limsup_{N\to \infty} \big\| \frac{1}{N}\sum _{n=0}^{N-1}
    \xi_n\big\|^2_{L^2(\mu^{(d+1)})}
    \le \limsup_{H\to \infty}\frac{1}{H} \sum_{h=0}^{H-1}\limsup_{N\to\infty}
    \left|\frac{1}{N}\sum_{n=0}^{N-1} \int \xi_{n+h}\cdot\xi_n d\mu^{(d+1)} \right|.
\end{equation*}
Letting $M$ denote the last $\limsup$, we need to show that $M\le
l^2\HK f_l\HK^2_{d+1}$. For any $h\ge 1$,
\begin{equation*}
\begin{split}
    & \ \ \ \left|\frac{1}{N}\sum_{n=0}^{N-1} \int \xi_{n+h}\cdot\xi_n d\mu^{(d+1)}
    \right| \\ &=\left | \int(f_1\cdot T^hf_1)\otimes \frac{1}{N}\sum_{n=0}^{N-1}
    (\sigma_d)^n \bigotimes_{j=2}^{d+1}f_j\cdot T^{jh}f_jh
    \mu^{(d+1)}(x_1,\ldots,x_{d+1})\right|\\
    &\le \Big\|f_1\cdot T^hf_1 \Big\|_{L^2(\mu^{(d+1)})}\cdot
    \Big\|\frac{1}{N}\sum_{n=0}^{N-1}
    (\sigma_d)^n \bigotimes_{j=2}^{d+1}f_j\cdot T^{jh}f_j
    \Big\|_{L^2(\mu^{(d+1)})}\\
    &= \Big\|f_1\cdot T^hf_1 \Big\|_{L^2(\mu)}\cdot
    \Big\|\frac{1}{N}\sum_{n=0}^{N-1}
    (\sigma_d)^n \bigotimes_{j=2}^{d+1}f_j\cdot T^{jh}f_j
    \Big\|_{L^2(\mu^{(d)})}
\end{split}
\end{equation*}
and by the inductive assumption,
\begin{equation*}
   \left|\frac{1}{N}\sum_{n=0}^{N-1} \int \xi_{n+h}\cdot\xi_n d\mu^{(d+1)}
    \right|\le l\HK f_l\cdot T^{lh}\HK_d.
\end{equation*}
We get
\begin{equation*}
\begin{split}
    M& \le l\cdot \limsup_{H\to\infty}
    \frac{1}{H}\sum_{h=0}^{H-1}\HK f_l\cdot T^{lh}f_l\HK_d
    \le l^2\cdot\limsup_{H\to\infty}\frac{1}{H}\sum_{h=0}^{H-1} \HK f_l\cdot T^h
    f_l\HK_d\\
    &\le l^2\cdot \limsup_{H\to\infty} \Big( \frac{1}{H}\sum_{h=0}^{H-1} \HK f_l\cdot T^h
    f_l\HK_d^{2^d} \Big)^{1/2^d}\\
    &=l^2\cdot \HK f_l\HK_{d+1}^2.
\end{split}
\end{equation*}
The last equation follows from Lemma \ref{lemmaE1}. The proof is completed.
\end{proof}

\begin{lem}\label{AP-lem-vdc2}
Let $(X,\X,\mu,T)$ be an ergodic system and $d\in \N$. Assume that
$f_1,\ldots,f_d\in L^\infty(X,\mu)$. Then
\begin{equation}\label{}
    \E\Big(\bigotimes_{j=1}^d f_j\Big|\I(X^d, \mu^{(d)},\sigma_d)\Big)  =\E\Big( \bigotimes_{j=1}^d
    \E(f_j|\ZZ_{d-1})\Big |\I(X^d, \mu^{(d)},\sigma_d) \Big).
\end{equation}

\end{lem}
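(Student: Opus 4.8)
The plan is to identify the conditional expectation onto $\I(X^d,\mu^{(d)},\sigma_d)$ with an ergodic average and then show that only the $\ZZ_{d-1}$-components of the $f_j$ survive in the limit. Concretely, by von Neumann's mean ergodic theorem applied to the Koopman operator of $\sigma_d$ on $L^2(\mu^{(d)})$, the orthogonal projection onto the $\sigma_d$-invariant subspace is the $L^2(\mu^{(d)})$-limit of the Cesàro averages, so that
\begin{equation*}
\E\Big(\bigotimes_{j=1}^d f_j\Big|\I(X^d,\mu^{(d)},\sigma_d)\Big)=\lim_{N\to\infty}\frac1N\sum_{n=0}^{N-1}\sigma_d^n\Big(\bigotimes_{j=1}^d f_j\Big)=\lim_{N\to\infty}\frac1N\sum_{n=0}^{N-1}\prod_{j=1}^d f_j(T^{jn}x_j)
\end{equation*}
in $L^2(\mu^{(d)})$. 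Thus it suffices to prove that replacing each $f_j$ by $\E(f_j|\ZZ_{d-1})$ leaves this limit unchanged.

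Set $g_j=\E(f_j|\ZZ_{d-1})$ and $h_j=f_j-g_j$, so that $\E(h_j|\ZZ_{d-1})=0$ for every $j$, and note $g_j\in L^\infty(\mu)$. By the telescoping identity of Lemma \ref{lem-product} (with $a_j=f_j$, $b_j=g_j$),
\begin{equation*}
\bigotimes_{j=1}^d f_j-\bigotimes_{j=1}^d g_j=\sum_{k=1}^{d}\Big(\bigotimes_{j<k}f_j\Big)\otimes h_k\otimes\Big(\bigotimes_{j>k}g_j\Big),
\end{equation*}
so the difference is a sum of $d$ tensor products, each having exactly one factor $h_k$ with $\E(h_k|\ZZ_{d-1})=0$. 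It remains to show that the $\sigma_d$-ergodic average of each such term tends to $0$ in $L^2(\mu^{(d)})$.

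For a fixed $k$, normalize so that all functions have sup-norm at most $1$ (the averages are multilinear, so scalar factors are harmless) and apply Lemma \ref{AP-lem-vdc} to the ordered tuple $(f_1,\dots,f_{k-1},h_k,g_{k+1},\dots,g_d)$. Taking $l=k$ in the resulting bound and using the defining property of the Host--Kra factor, namely $\HK\varphi\HK_d=0$ if and only if $\E(\varphi|\ZZ_{d-1})=0$ \cite{HK05}, we get $\HK h_k\HK_d=0$, whence
\begin{equation*}
\lim_{N\to\infty}\Big\|\frac1N\sum_{n=0}^{N-1}f_1(T^nx_1)\cdots h_k(T^{kn}x_k)\cdots g_d(T^{dn}x_d)\Big\|_{L^2(\mu^{(d)})}\le k\,\HK h_k\HK_d=0.
\end{equation*}
Hence the conditional expectation of each term vanishes, and summing over $k$ yields $\E(\bigotimes_{j} f_j\,|\,\I)=\E(\bigotimes_{j} g_j\,|\,\I)$, which is exactly the asserted identity.

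The only genuinely nontrivial input is Lemma \ref{AP-lem-vdc} together with the seminorm characterization $\HK\varphi\HK_d=0\Leftrightarrow\E(\varphi|\ZZ_{d-1})=0$; the mean-ergodic-theorem identification, the telescoping, and the normalization are all routine. I expect the main point to watch is simply keeping the averaging operator $\sigma_d^n$ and the target factor index $\ZZ_{d-1}$ aligned with the conventions under which Lemma \ref{AP-lem-vdc} is stated.
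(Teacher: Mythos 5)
Your proposal is correct and follows essentially the same route as the paper: reduce via the telescoping identity of Lemma \ref{lem-product} to the case where one factor satisfies $\E(f_k|\ZZ_{d-1})=0$, identify the conditional expectation with the $L^2(\mu^{(d)})$-limit of the $\sigma_d$-Cesàro averages via the mean ergodic theorem, and kill that limit using Lemma \ref{AP-lem-vdc} with $l=k$ together with the characterization $\HK f_k\HK_d=0\Leftrightarrow\E(f_k|\ZZ_{d-1})=0$. You merely spell out the telescoping and the normalization more explicitly than the paper does.
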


\begin{proof}
By Lemma \ref{lem-product}, it suffices to show that
\begin{equation}\label{}
\E\Big(\bigotimes_{j=1}^d f_j\Big|\I(X^d, \mu^{(d)},\sigma_d)\Big)
=0
\end{equation}
whenever $\E(f_k|\ZZ_{d-1})=0$ for some $k\in \{1,2,\ldots,d\}$.
This condition implies that $\HK f_k\HK_d=0$. By the Ergodic Theorem
and Lemma \ref{AP-lem-vdc}, we have
\begin{equation*}
\begin{split}
&\ \ \ \ \Big|\E\Big(\bigotimes_{j=1}^d f_j\Big|\I(X^d,
\mu^{(d)},\sigma_d)\Big)\Big| \\
&=\lim_{N\to\infty}\Big\|
\frac{1}{N}\sum_{n=0}^{N-1}f_1(T^nx_1)f_2(T^{2n}x_2)\ldots
f_d(T^{dn}x_d) \Big\|_{L^2(\mu^{(d)})}\le k \cdot\interleave
f_k\interleave_d =0.
\end{split}
\end{equation*}
So the lemma follows.
\end{proof}

\begin{prop}
Let $(X,\X,\mu,T)$ be ergodic and $d\in \N$. Then the
$\sigma$-algebra $\I(X^d, \mu^{(d)},\sigma_d)$ is measurable with
respect to $\ZZ_{d-1}^{(d)}$.
\end{prop}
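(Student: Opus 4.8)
The plan is to obtain the proposition as a consequence of Lemma~\ref{AP-lem-vdc2} together with a density-and-invariance argument. Write $\I=\I(X^d,\mu^{(d)},\sigma_d)$ for the invariant $\sigma$-algebra and let $\ZZ_{d-1}^{(d)}=\bigotimes_{j=1}^d\ZZ_{d-1}$ denote the product $\sigma$-algebra on $X^d$ (the one corresponding to the factor $X^d\to Z_{d-1}^d$). The goal is to show $\I\subseteq \ZZ_{d-1}^{(d)}$ modulo $\mu^{(d)}$-null sets, i.e. that every bounded $\I$-measurable function lies in $L^2(\ZZ_{d-1}^{(d)})$.

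I would first record two facts. (i) $\ZZ_{d-1}^{(d)}$ is $\sigma_d$-invariant: since $\sigma_d=T\times T^2\times\cdots\times T^d$ and $\ZZ_{d-1}$ is $T$-invariant, each coordinate map $T^j$ preserves $\ZZ_{d-1}$, so $\sigma_d$ preserves the product $\sigma$-algebra (and it preserves $\mu^{(d)}$ by construction). (ii) The conditional expectation onto $\I$ sends $\ZZ_{d-1}^{(d)}$-measurable functions to $\ZZ_{d-1}^{(d)}$-measurable functions: for bounded $\ZZ_{d-1}^{(d)}$-measurable $G$ the von Neumann mean ergodic theorem gives $\E(G\,|\,\I)=\lim_{N}\frac1N\sum_{n=0}^{N-1}G\circ\sigma_d^{\,n}$ in $L^2(\mu^{(d)})$; by (i) each average is $\ZZ_{d-1}^{(d)}$-measurable, and $L^2(\ZZ_{d-1}^{(d)})$ is closed, so the limit $\E(G\,|\,\I)$ is $\ZZ_{d-1}^{(d)}$-measurable as well.

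With these in hand the argument is short. For a tensor product $F=\bigotimes_{j=1}^d f_j$ with $f_j\in L^\infty(\mu)$, Lemma~\ref{AP-lem-vdc2} gives $\E(F\,|\,\I)=\E(G\,|\,\I)$ where $G=\bigotimes_{j=1}^d\E(f_j|\ZZ_{d-1})$ is $\ZZ_{d-1}^{(d)}$-measurable; hence by (ii), $\E(F\,|\,\I)\in L^2(\ZZ_{d-1}^{(d)})$. Finite linear combinations of such tensor products are dense in $L^2(X^d,\mu^{(d)})$ (the product $\sigma$-algebra is generated by measurable rectangles, whose indicators are tensor products), and $\E(\,\cdot\,|\,\I)$ is a contraction onto the closed subspace of invariant functions, so $\E(F\,|\,\I)\in L^2(\ZZ_{d-1}^{(d)})$ for every $F\in L^2(\mu^{(d)})$. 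In particular, if $F$ is $\I$-measurable then $F=\E(F\,|\,\I)\in L^2(\ZZ_{d-1}^{(d)})$, which is exactly the assertion $\I\subseteq\ZZ_{d-1}^{(d)}$.

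The essential content is carried entirely by Lemma~\ref{AP-lem-vdc2} (ultimately by the van der Corput estimate of Lemma~\ref{AP-lem-vdc}); what remains is bookkeeping. The one point demanding care is fact (ii): I would phrase it through the mean ergodic theorem, using that $\ZZ_{d-1}^{(d)}$ is genuinely $\sigma_d$-invariant so that the Ces\`aro averages computing $\E(\,\cdot\,|\,\I)$ stay in the closed subspace $L^2(\ZZ_{d-1}^{(d)})$. I would deliberately avoid trying to factor the conditional expectation of a tensor onto the product $\sigma$-algebra as a tensor of conditional expectations, since that identity is not automatic for the non-product joining $\mu^{(d)}$ and, happily, is not needed for this conclusion.
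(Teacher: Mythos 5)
Your proposal is correct and follows essentially the same route as the paper: approximate by (conditional expectations of) tensor products, use Lemma~\ref{AP-lem-vdc2} to replace each $f_j$ by $\E(f_j|\ZZ_{d-1})$, and invoke the $\sigma_d$-invariance of $\ZZ_{d-1}^{(d)}$ to see that $\E(\,\cdot\,|\,\I)$ preserves $\ZZ_{d-1}^{(d)}$-measurability. The only difference is that you make explicit, via the mean ergodic theorem, the step the paper states in one line, which is a welcome clarification rather than a deviation.
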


\begin{proof}
Every bounded function on $X^{d}$ which is measurable with respect
to \linebreak $\I(X^d, \mu^{(d)},\sigma_d)$ can be approximated in
$L^2(\mu^{(d)})$ by finite sums of functions of the form
$\E(\otimes_{j=1}^d f_j | \I( X^d, \mu^{(d)}, \sigma_d))$ where
$f_1,\ldots,f_d$ are bounded functions on $X$. By Lemma
\ref{AP-lem-vdc2}, one can assume that these functions are
measurable with respect to $Z_{d-1}$. In this case
$\otimes_{j=1}^df_j$ is measurable with respect to
$\ZZ_{d-1}^{(d)}$. Since this $\sigma$-algebra $\ZZ_{d-1}^{(d)}$ is
invariant under $\sigma_d$, $\E(\otimes_{j=1}^df_j|\I(X^d,
\mu^{(d)},\sigma_d))$ is also measurable with respect to
$\ZZ_{d-1}^{(d)}$. Therefore $\I(X^d, \mu^{(d)},\sigma_d)$ is
measurable with respect to $\ZZ_{d-1}^{(d)}$.
\end{proof}

\begin{cor}\label{AP-cor-ergodic}
Let $(X,\X,\mu,T)$ be an ergodic system and $d\in \N$. Then the
factor map $\pi_{d-1}^d: (X^{d}, \mu^{(d)},\sigma_d)\rightarrow
(Z_{d-1}^d, \mu_{d-1}^{(d)},\sigma_d)$ is ergodic.
\end{cor}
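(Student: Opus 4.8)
The plan is to read this corollary off directly from the preceding Proposition, once the definition of an ergodic extension is unwound. Recall that a factor map $\pi:(A,\alpha,S)\to(B,\beta,S)$ is by definition an \emph{ergodic} extension exactly when $\I(A,S)\subset\beta$; that is, when every $S$-invariant measurable function on $A$ agrees, modulo null sets, with a function measurable with respect to the pulled-back $\sigma$-algebra $\pi^{-1}(\beta)$.

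First I would check that $\pi_{d-1}^d=\pi_{d-1}\times\cdots\times\pi_{d-1}$ ($d$ copies) is genuinely a factor map of the two systems. Its commutation with $\sigma_d=T\times T^2\times\cdots\times T^d$ follows coordinatewise from $\pi_{d-1}\circ T=T\circ\pi_{d-1}$, and $(\pi_{d-1}^d)_*\mu^{(d)}=\mu^{(d)}_{d-1}$ holds by the way $\mu^{(d)}_{d-1}$ arises as the image of $\mu^{(d)}$ (compare the computation carried out in the case $d=3$). The point to keep in mind is that the $\sigma$-algebra $\ZZ_{d-1}^{(d)}$ occurring in the Proposition is precisely the pullback $(\pi_{d-1}^d)^{-1}$ of the Borel $\sigma$-algebra $\ZZ_{d-1}^{\otimes d}$ of $Z_{d-1}^d$: in the proof of the Proposition, $\ZZ_{d-1}^{(d)}$ was identified as the $\sigma$-algebra generated by the tensor products $\bigotimes_{j=1}^d f_j$ with each $f_j$ measurable with respect to $\ZZ_{d-1}$, which is exactly this pullback.

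With these identifications in place, the definition of ergodic extension requires nothing more than the containment $\I(X^d,\mu^{(d)},\sigma_d)\subset\ZZ_{d-1}^{(d)}$, and this is exactly the statement of the Proposition just established. Hence $\pi_{d-1}^d$ is an ergodic extension. There is no real obstacle here beyond bookkeeping; the only step needing a moment's attention is the identification of $\ZZ_{d-1}^{(d)}$ with the pullback $\sigma$-algebra of the Borel sets of $Z_{d-1}^d$, after which the corollary is immediate.
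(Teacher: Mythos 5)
Your proposal is correct and matches the paper's (implicit) argument: the corollary is stated as an immediate consequence of the preceding Proposition together with the definition of an ergodic extension, exactly as you unwind it, with $\ZZ_{d-1}^{(d)}$ being the pullback under $\pi_{d-1}^d$ of the product $\sigma$-algebra on $Z_{d-1}^d$. The paper offers no further detail, so your bookkeeping (factor-map check and the identification of $\ZZ_{d-1}^{(d)}$) is all that is needed.
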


\subsubsection{General case}

Now we show the general case.
Assume that Theorem B holds
for $d\ge 1$. We show it also holds for $d+1$.

Let $\pi_{d-1}: X\rightarrow Z_{d-1}$ be the factor map from $X$ to
its $d-1$-step nilfactor $Z_{d-1}$. By definition, $Z_{d-1}$ may be
regarded as a topological system in the natural way. By Weiss's Theorem, there is a uniquely ergodic model $(\h{X}, \h{\X},
\h{\mu}, T)$ for $(X,\X, \mu, T)$ and a factor map $\h{\pi}_{d-1}:
\h{X}\rightarrow Z_{d-1}$ which is a model for $\pi_{d-1}:
X\rightarrow Z_{d-1}$.

\[
\begin{CD}
X @>{}>> \h{X}\\
@V{\pi_{d-1}}VV      @VV{\h{\pi}_{d-1}}V\\
Z_{d-1} @>{ }>> Z_{d-1}
\end{CD}
\]

\medskip

Hence for simplicity, we may assume that $(\h{X}, \h{\X}, \h{\mu},
T)= (X,\X, \mu, T)$ and $\pi_{d-1}=\h{\pi}_{d-1}$. Now we show that
$(X^{d+1},\langle\tau_{d+1},\sigma_{d+1}\rangle)$ is uniquely ergodic. Recall
that $(X^{d},\langle\tau_{d},\sigma_{d}\rangle)$ is uniquely ergodic by the
inductive assumption, and we denote its unique measure by
$\mu^{(d)}$.

\medskip

By Corollary \ref{AP-cor-ergodic}, the factor map $\pi_{d-1}^d:
(X^{d}, \mu^{(d)},\sigma_d)\rightarrow (Z_{d-1}^d,
\mu_{d-1}^{(d)},\sigma_d)$ is ergodic. Hence $\I(X^d,
\mu^{(d)},\sigma_d)=\I(Z_{d-1}^d, \mu_{d-1}^{(d)},\sigma_d)$. By
(\ref{h9}),
$$\displaystyle \mu^{(d)}_{d-1} = \int_{Z_{d-1}} \nu^{(d)}_{d-1,x}\ d \mu_{d-1}(x) $$
is the ergodic decomposition of $\mu^{(d)}_{d-1}$ under $\sigma_d$.
Hence $(X^d, \I(X^d, \mu^{(d)}, \sigma_d))$ is isomorphic to
$(Z_{d-1},\ZZ_{d-1}, \mu_{d-1})$. Let
\begin{equation}\label{h4}
\begin{split}
    (X^d, \X^d,\mu^{(d)})&\stackrel{\phi}{\lra} (X^d,
    \I(X^d, \mu^{(d)},\sigma_d), \mu^{(d)})\stackrel{\psi}{\longleftrightarrow}
    (Z_{d-1},\ZZ_{d-1},\mu_{d-1})\\
    {\bf x}&\lra \phi({\bf x}) \longleftrightarrow s=\psi(\phi({\bf x}))
\end{split}
\end{equation}

From this, we can denote the ergodic decompositions of $\mu^{(d)}$
under $\sigma_d$ by
\begin{equation}\label{}
    \mu^{(d)}=\int_{Z_{d-1}}\nu^{(d)}_s\ d \mu_{d-1}(s).
\end{equation}

\bigskip

Now we continue our proof for $d+1$. Let $\ll$ be a
$\langle\tau_{d+1},\sigma_{d+1}\rangle$-invariant measure of $N_{d+1}(X)$. Let
$$p_1: (N_{d+1}(X), \langle\tau_{d+1},\sigma_{d+1}\rangle)\rightarrow (X, T); \
(x_1,{\bf x})\mapsto x_1$$
$$p_2: (N_{d+1}(X), \langle\tau_{d+1},\sigma_{d+1}\rangle)\rightarrow
(N_d(X), \langle\tau_d, \sigma_d\rangle); \ (x_1,{\bf x})\mapsto {\bf x}$$  be
the projections. Then $(p_2)_*(\ll)$ is a
$\langle\tau_d,\sigma_d\rangle$-invariant measure of $N_d(X)$. By the assumption
on $d$, $(p_2)_*(\ll)=\mu^{(d)}$. Hence let
\begin{equation}\label{h5}
\ll =\int_{X^d} \ll_{{\bf x}}\times \d_{\bf x}\ d \mu^{(d)} ({\bf
x})
\end{equation}
be the disintegration of $\ll$ over $\mu^{(d)}$. Since $\ll$ is
$\sigma'_{d+1}=\id\times \sigma_d$-invariant, we have
\begin{eqnarray*}
    \ll &= &\id \times \sigma_d \ll=
    \int_{X^d} \ll_{\bf x}\times \sigma_d\d_{\bf x} \ d \mu^{(d)}({\bf x})\
    \\ & = & \int_{X^d} \ll_{\bf x}\times \d_{\sigma_d({\bf x})} \ d \mu^{(d)}({\bf x})\\
    \\&=& \int_{X^d} \ll_{(\sigma_d)^{-1}({\bf x})}\times \d_{\bf x} \ d \mu^{(d)}({\bf x}).
\end{eqnarray*}
The uniqueness of disintegration implies that
\begin{equation}\label{h6}
    \ll_{(\sigma_d)^{-1}({\bf x})}=\ll_{\bf x},\quad \mu^{(d)} \ a.e.
\end{equation}

Define $$F: (X^d,\mu^{(d)}, \sigma_d) \lra M(X): \ {(x,y)}\mapsto
\ll_{(x,y)}.$$ By (\ref{h6}), $F$ is a $\sigma_d$-invariant
$M(X)$-value function. Hence $F$ is $\I(X^d,
\mu^{(d)},\sigma_d)$-measurable, and hence $\ll_{\bf
x}=\ll_{\phi({\bf x})}=\ll_s, \ \mu^{(d)}\ $ a.e., where $\phi$ is
defined in (\ref{h4}).

Thus by (\ref{h5}) one has that
\begin{equation*}
\begin{split}
    \ll& =\int_{X^d}  \ll_{{\bf x}}\times \d_{\bf x}\ d\mu^{(d)} ({\bf x})=
\int_{X^d} \ll_{\phi({\bf x})} \times\d_{\bf x}\ d\mu^{(d)} ({\bf x})\\
       &=\int_{Z_{d-1}}\int_{X^d}\ll_s\times \d_{\bf x}\ d\nu^{(d)}_s({\bf x})
       d\mu_{d-1}(s)\\
       &= \int_{Z_{d-1}}\ll_s\times \Big(\int_{X^d} \d_{\bf x}\
       d\nu^{(d)}_s({\bf x})\Big ) d \mu_{d-1}(s)\\
       &=\int_{Z_{d-1}}\ll_s\times \nu^{(d)}_s \ d\mu_{d-1}(s)
\end{split}
\end{equation*}

Let $\pi^{d+1}: (N_{d+1}(X), \langle\tau_{d+1},\sigma_{d+1}\rangle)\lra
(N_{d+1}(Z_{d-1}), \langle\tau_{d+1},\sigma_{d+1}\rangle)$ be the natural factor
map. By Theorem \ref{ziegler}, $(N_{d+1}(Z_{d-1}),
\langle\tau_{d+1},\sigma_{d+1}\rangle,\mu^{(d+1)}_{d-1})$ is uniquely ergodic.
Hence by Theorem \ref{ziegler}
\begin{equation*}
\begin{split}
    \pi^{d+1}_*(\ll) =\mu_{d-1}^{(d+1)}=\int_{Z_{d-1}}\d_{s}\times \mu^{(d+1)}_{d-1,s} \
    d\mu_{d-1}(s).
\end{split}
\end{equation*}
And
$$\pi_*(\ll_s)=\d_s, \ \text{and}\ (\pi^d)_*(\nu^{(d)}_s)=\mu^{(d+1)}_{d-1,s}.$$
Note that we have that
$$(p_1)_*(\ll)=\mu, \ \text{and}\ (p_2)_*(\ll)=\mu^{(d)},$$
and hence we have
$$\mu=\int_{Z_{d-1}}\ll_s\ d\mu_{d-1}(s).$$
Let $\mu=\int _{Z_{d-1}}\theta_s\ d\mu_{d-1}(s)$ be the disintegration
of $\mu$ over $\mu_{d-1}$. Note that
$\pi_*(\ll_s)=\pi_*(\theta_s)=\d_s$, $\mu_{d-1},
 a.e.$. Hence by the uniqueness of disintegration, we have that
$\ll_s=\theta_s$, $\mu_{d-1}$ a.e.. Thus
\begin{equation}\label{limit-D}
\ll_{\tau,\sigma;d+1}=\ll=\int_{Z_{d-1}}\ll_s\times \nu^{(d)}_s \ d\mu_{d-1}(s)=
\int_{Z_{d-1}}\theta_s\times \nu^{(d)}_s \ d\mu_{d-1}(s).
\end{equation} That is,
$(N_{d+1}(X), \langle\tau_{d+1}, \sigma_{d+1}\rangle)$ is uniquely ergodic. The
whole proof is completed.\hfill $\square$

\bigskip

\appendix

\section{Background on Ergodic Theory}

In This Appendix we try to cover notions and results in ergodic
theory which are used in the article. Let $(X,\X,\mu,T)$ be a
measurable system.

\subsubsection{Ergodicity and weak mixing}

First we list some equivalent conditions for ergodicity and weak
mixing.

\begin{thm}
Let $(X,\X,\mu,T)$ be a measurable system. Then the following
conditions are equivalent:
\begin{enumerate}
  \item $T$ is ergodic.
  \item Every measurable function $f$ from $X$ to some Polish Space
  $ P$ satisfying $f\circ T=f\ a.e.$ is of form $f\equiv p$ $a.e.$
  for some point $p\in P$.
  \item $\displaystyle \lim_{N\to\infty} \sum_{n=0}^{N-1}
  \int f\circ T^n\cdot g \ d\mu=\int f\ d\mu
  \int g\ d\mu  $, for all $f,g\in L^2(\mu)$ (or $L^{1}(\mu)$).
\end{enumerate}
\end{thm}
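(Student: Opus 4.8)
The plan is to treat condition (1) as the hub and prove $(1)\Leftrightarrow(2)$ and $(1)\Leftrightarrow(3)$ separately. The implication $(2)\Rightarrow(1)$ is immediate: if $A\in\X$ is $T$-invariant, then $f=1_A$ is a measurable map into the (finite, hence Polish) space $\{0,1\}$ satisfying $f\circ T=f$ a.e., so by (2) it is a.e. constant, forcing $\mu(A)\in\{0,1\}$.

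For $(1)\Rightarrow(2)$, let $f:X\to P$ be measurable with $f\circ T=f$ a.e. For any open $U\subseteq P$ the preimage $f^{-1}(U)$ satisfies $T^{-1}f^{-1}(U)=f^{-1}(U)$ modulo a $\mu$-null set, since $f\circ T=f$ a.e.; such essentially invariant sets still have measure $0$ or $1$ under ergodicity. Now I would exploit separability of $P$: for each $k\ge1$ cover $P$ by countably many balls of radius $1/k$. As $X$ is the union of their preimages and each preimage has measure $0$ or $1$, some ball $B_k$ of radius $1/k$ has $\mu(f^{-1}(B_k))=1$. Then $\mu\bigl(f^{-1}(\bigcap_{k\le K}B_k)\bigr)=1$ for every $K$, so every finite intersection is nonempty and has diameter at most $2/K\to0$; completeness of $P$ forces $\bigcap_k B_k=\{p\}$ for a single point $p$, whence $f=p$ a.e. The main obstacle is precisely this step: one must carefully replace strict invariance ``$T^{-1}B=B$'' by invariance mod null sets, and then use separability together with completeness of $P$ to pin the essential range of $f$ down to exactly one point.

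Finally I would establish $(1)\Leftrightarrow(3)$ (reading the average in (3) as the Ces\`aro mean $\frac1N\sum_{n=0}^{N-1}$). For $(1)\Rightarrow(3)$, von Neumann's mean ergodic theorem gives that $A_Nf=\frac1N\sum_{n=0}^{N-1}f\circ T^n$ converges in $L^2(\mu)$ to the orthogonal projection of $f$ onto the subspace of $T$-invariant functions; ergodicity makes that subspace the constants, so $A_Nf\to\int f\,d\mu$. Pairing with $g$ yields $\frac1N\sum_{n=0}^{N-1}\int f\circ T^n\cdot g\,d\mu\to\int f\,d\mu\int g\,d\mu$ (for real-valued functions; in the complex case one pairs with $\bar g$). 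For $(3)\Rightarrow(1)$, I would take $f=g=1_A$ with $A$ invariant, so that $f\circ T^n=1_A$ for every $n$; the left-hand side then equals $\mu(A)$ while the right-hand side equals $\mu(A)^2$, giving $\mu(A)\in\{0,1\}$. Combining $(1)\Leftrightarrow(2)$ with $(1)\Leftrightarrow(3)$ completes the equivalence of all three conditions.
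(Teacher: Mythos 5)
Your proof is correct. Note, however, that the paper states this theorem only as standard background in its appendix and supplies no proof of its own, so there is nothing to compare against; I can only assess your argument on its merits. All four implications are sound: $(2)\Rightarrow(1)$ via $f=1_A$, $(3)\Rightarrow(1)$ via $f=g=1_A$, $(1)\Rightarrow(3)$ via the mean ergodic theorem, and $(1)\Rightarrow(2)$ via the covering of $P$ by balls of radius $1/k$. You correctly repaired the paper's typo in (3) (the missing normalization $\tfrac1N$), and you correctly flagged the one step that needs care, namely upgrading the definition of ergodicity from strictly invariant sets to essentially invariant sets (the standard device is to replace $A$ by $\limsup_n T^{-n}A$). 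One small simplification: in $(1)\Rightarrow(2)$ you do not actually need completeness of $P$. Once each finite intersection $\bigcap_{k\le K}B_k$ has preimage of full measure, continuity from above gives $\mu\bigl(f^{-1}(\bigcap_{k}B_k)\bigr)=1$, so $\bigcap_k B_k$ is nonempty; any point $p$ in it satisfies $d(p,q)\le 2/k$ for every other point $q$ in the intersection and every $k$, so the intersection is the singleton $\{p\}$ and $f=p$ a.e. Separability of $P$ is the only topological input required.
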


\begin{thm}
Let $(X,\X,\mu,T)$ be a measurable system. Then the following
conditions are equivalent:
\begin{enumerate}
  \item $T$ is weakly mixing.
  \item 1 is the only eigenvalue of $T$ and the geometric multiplicity
  of eigenvalue 1 is 1.
  \item The product system with any ergodic system is still ergodic.
\end{enumerate}
\end{thm}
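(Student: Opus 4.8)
The plan is to prove the cycle $(1)\Rightarrow(2)\Rightarrow(3)\Rightarrow(1)$, working throughout with the Koopman operator $U_Tf=f\circ T$ on $L^2(\mu)$, which is unitary. Two preliminary remarks will be used repeatedly: first, weak mixing implies ergodicity, since for a $T$-invariant set $A$ the set $A\times X$ is $(T\times T)$-invariant, so ergodicity of $T\times T$ forces $\mu(A)\in\{0,1\}$; second, every eigenvalue $\lambda$ of an ergodic system has $|\lambda|=1$, because $f\circ T=\lambda f$ gives $|f|\circ T=|f|$, hence $|f|$ is a.e. constant and $|\lambda|=1$.

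For $(1)\Rightarrow(2)$: assume $f\circ T=\lambda f$ with $f\not\equiv 0$, and set $F(x,y)=f(x)\overline{f(y)}$ on $X\times X$. Then $F\circ(T\times T)=|\lambda|^2F=F$, so $F$ is $(T\times T)$-invariant and, by weak mixing, constant a.e. Taking $y=x$ shows $|f|$ is a.e. a nonzero constant, and then $f(x)\overline{f(y)}=\mathrm{const}$ forces $f$ to be a.e. constant; hence $\lambda=1$ and the eigenspace of $1$ consists of the constants, which is (2).

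The heart of the argument, and the step I expect to be the main obstacle, is $(2)\Rightarrow(3)$. Let $(Y,\Y,\nu,S)$ be ergodic. To show $T\times S$ is ergodic I would apply the correlation criterion for ergodicity from the preceding theorem to product functions, using the identity $\langle U_{T\times S}^n(f_1\otimes g_1),\,f_2\otimes g_2\rangle=\langle U_T^nf_1,f_2\rangle\,\langle U_S^ng_1,g_2\rangle$. Splitting each function into its mean and its mean-zero part, the only contribution that does not already match the product of integrals is the one in which $f_1\in L^2_0(\mu)$. Here the spectral theorem provides a spectral measure $\sigma_{f_1}$ on $\mathbb{T}$, and condition (2)---no eigenfunctions orthogonal to the constants---says precisely that $\sigma_{f_1}$ has no atoms. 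Wiener's lemma then gives $\frac1N\sum_{n=0}^{N-1}|\langle U_T^nf_1,f_2\rangle|^2\to 0$, while $|\langle U_S^ng_1,g_2\rangle|\le\|g_1\|\,\|g_2\|$ stays bounded, so Cauchy--Schwarz yields $\frac1N\sum_{n=0}^{N-1}\langle U_T^nf_1,f_2\rangle\langle U_S^ng_1,g_2\rangle\to 0$. Assembling the pieces shows the Ces\`aro correlation averages of $T\times S$ converge to the product of the four integrals, so $T\times S$ is ergodic.

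Finally $(3)\Rightarrow(1)$ is quick: applying (3) with $S$ the trivial one-point system gives that $T\times S\cong T$ is ergodic, so $T$ itself is ergodic and therefore qualifies as an admissible choice of $S$; taking $S=T$ in (3) then makes $T\times T$ ergodic, which is weak mixing. The only nonformal ingredient is thus the spectral identification in $(2)\Rightarrow(3)$ of ``no nonconstant eigenfunctions'' with ``atomless spectral type on $L^2_0$'', together with Wiener's lemma; the remaining steps are manipulations of invariant sets and product functions.
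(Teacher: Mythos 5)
Your argument is correct, but there is nothing in the paper to compare it against: this theorem appears in the appendix ``Background on Ergodic Theory'' as a list of standard equivalent formulations of weak mixing and is stated without proof. Taken on its own terms, your cycle $(1)\Rightarrow(2)\Rightarrow(3)\Rightarrow(1)$ is the classical spectral-theoretic proof and all the essential ingredients are in place: the identification of condition (2) with ``$U_T$ restricted to $L^2_0(\mu)$ has continuous spectrum,'' the absolute continuity $\sigma_{f_1,f_2}\ll\sigma_{f_1}$ needed to apply Wiener's lemma to cross-correlations, the Cauchy--Schwarz estimate against the bounded factor $\langle U_S^n g_1,g_2\rangle$, and the density of linear combinations of product functions in $L^2(\mu\times\nu)$. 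Two small points of hygiene: in $(1)\Rightarrow(2)$ the phrase ``taking $y=x$'' restricts the a.e.\ identity $f(x)\overline{f(y)}=\mathrm{const}$ to the diagonal, which is typically a null set for $\mu\times\mu$; you do not need it, since $|f|$ is already a.e.\ constant by ergodicity (your own preliminary remark), and the constancy of $f$ then follows by Fubini from $f(x)\overline{f(y)}=c$ with $f\neq 0$ a.e. Also, it is worth saying explicitly that condition (2) already entails ergodicity of $T$ (geometric multiplicity one for the eigenvalue $1$ means the invariant functions are exactly the constants), since ergodicity of $T$ is what lets you close the loop in $(3)\Rightarrow(1)$ and is used implicitly when you invoke the mean ergodic theorem for the ``mean part'' in $(2)\Rightarrow(3)$.
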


\subsubsection{Conditional expectation}

If $\Y$ is a $T$-invariant sub-$\sigma$-algebra of $\X$ and $f\in
L^1(\mu)$, we write $\E(f|\Y)$, or $\E_\mu(f|\Y)$ if needed, for the
{\em conditional expectation} of $f$ with respect to $\Y$. The
conditional expectation $\E(f|\Y)$ is characterized as the unique
$\Y$-measurable function in $L^2(Y,\Y, \nu)$ such that
\begin{equation}
    \int_Y g \E(f|\Y)d\nu = \int_X  g\circ \pi f d\mu
\end{equation}
for all $g\in L^2(Y,\Y, \nu)$. We will frequently make use of the
identities $$\int \E(f|\Y) \ d\mu = \int f \ d\mu \quad
\text{and}\quad T \E(f|\Y) = \E(Tf|\Y).$$ We say that a function $f$
is {\em orthogonal} to $\Y$, and we write $f \perp \Y$, when it has a zero
conditional expectation on $\Y$. If a function $f\in L^1(\mu)$ is
measurable with respect to the factor $\Y$, we write $f \in L^1(Y,
\Y, \nu)$.

\medskip

The disintegration of $\mu$ over $\nu$, written as $\mu=\int\mu_y\ d\ \nu(y)$,
is given by a measurable map
$y \mapsto \mu_y$ from $Y$ to the space of probability measures on
$X$  such  that
\begin{equation}
    \E(f|\Y)(y)=\int_X f d\mu_y
\end{equation}
$\nu$-almost everywhere.

\subsubsection{Ergodic decomposition}

Let $x\mapsto \mu_x$ be a regular version of the conditional
measures with respect to the $\sigma$-algebra $\I$. This means that
the map $x\mapsto \mu_x$ is $\I$-measurable, and for very bounded
measurable function $f$ we have $E_\mu (f|\I)(x) = \int f \ d\mu_x$
for $\mu$-almost every $x \in X$. Then the {\em ergodic decomposition} of
$\mu$ is $\mu=\int\mu_x d \mu(x)$. The measures $\mu_x$ have the
additional property that for $\mu$-almost every $x\in X$ the system
$(X,\X,\mu_x , T)$ is ergodic.

\subsubsection{Inverse limit}

We say that $(X,\X, \mu, T)$ is an {\em inverse limit} of a sequence
of factors $(X,\X_j ,\mu, T)$ if $(\X_j)_{j\in\N}$ is an increasing
sequence of $T$-invariant sub-$\sigma$-algebras such that
$\bigvee_{j\in \N}\X_j=\X$ up to sets of measure zero.

\subsubsection{Group rotation}

All locally compact groups are implicitly assumed to be metrizable
and endowed with their Borel $\sigma$-algebras. Every compact group
$G$ is endowed with its Haar measure, denoted by $m_G$.

For a compact abelian group $Z$ and $t\in Z$, we write $(Z, t)$ for
the probability space $(Z,m_Z)$, endowed with the transformation
given by $z \mapsto tz$. A system of this kind is called a {\em
rotation}.

\subsubsection{Joining and conditional product measure}

Let $(X_i, \mu_i,T_i), i = 1,\ldots, k$, be measurable
systems, and let $(Y_i,\nu_i,S_i)$ be corresponding factors, and
$\pi_i:X_i\rightarrow Y_i$ the factor maps. A measure $\nu$ on
$Y=\prod_i Y_i$ defines a {\em joining} of the measures on $Y_i$ if
it is invariant under $S_1\times \ldots \times S_k$ and maps onto
$\nu_j$ under the natural map $\prod_i Y_i\rightarrow Y_j$.

Let $\nu$ be a joining of the measures on $Y_i, i=1,\ldots, k$, and
let $\mu_i=\int \mu_{X_i, y_i}\ d\nu_i(y_i)$ represent the
disintegration of $\mu_i$ with respect to $\nu_i$. Let $\mu$ be a
measure on $X=\prod_i X_i$ defined by
\begin{equation}\label{}
    \mu=\int_Y \mu_{X_1,y_1}\times \mu_{X_2,y_2}\times \ldots \times
    \mu_{X_k,y_k}\ d\nu(y_1,y_2,\ldots,y_k).
\end{equation}
Then $\mu$ is called the {\em conditional product measure with
respect to $\nu$}.

Equivalently, $\mu$ is conditional product measure relative to $\nu$
if and only if for all $k$-tuple $f_i\in L^\infty(X_i,\mu_i),
i=1,\ldots, k$
\begin{equation}
\begin{split}
\int_X
   &f_1(x_1)f_2(x_2)\ldots f_k(x_k)\ d\mu(x_1,x_2,\ldots,x_k)\\
   = & \int_Y \E(f_1|\Y_1)(y_1)\E(f_2|\Y_2)(y_2)\ldots \E(f_{k}|\Y_k)(y_{k})\
   d\nu (y_1,y_2,\ldots,y_{k}).
\end{split}
\end{equation}

\subsubsection{Relatively independent joining}

Let $(X_1,\X_1,\mu_1,T), (X_2,\X_2,\mu_2,T)$ be two systems and let
$(Y,\Y,\nu, S)$ be a common factor with $\pi_i: X_i\rightarrow Y$ for
$i = 1, 2$ the factor maps. Let $\mu_i=\int \mu_{i,y}\ d\nu(y)$
represent the disintegration of $\mu_i$ with respect to $Y$. Let
$\mu_1\times_Y \mu_2$ denote the measure defined by
$$\mu_1\times_Y \mu_2(A)=\int_Y \mu_{1,y}\times \mu_{2,y}\ d\nu(y),$$
for all $A\in \X_1\times \X_2$. The system $(X_1\times X_2,
\X_1\times \X_2,\mu_1\times_Y \mu_2, T\times T)$ is called the
{\em relative product} of $X_1$ and $X_2$ with respect to $Y$ and is
denoted $X_1\times_Y X_2$. $\mu_1\times_Y \mu_2 $ is also
called {\em relatively independent joining} of $X_1$ and $X_2$ over
$Y$.

\subsubsection{Isometric extensions}

Let $\pi: (X,\X,\mu,T)\rightarrow (Y,\Y,\nu,S)$ be a factor map. The
$L^2(X,\X,\mu)$ norm is denoted by $||\cdot||$ and the
$L^2(X,\X,\mu_y)$ norm by $||\cdot||_y$ for $\nu$-almost every $y
\in Y$. Recall $\{\mu_y\}_{y \in Y}$ is the disintegration of $\mu$
relative to $\nu$.

A function $f\in L^2(X,\X,\mu)$ is {\em almost periodic over $\Y$}
if for every $\ep>0$ there exist $g_1,\ldots, g_l \in
L^{2}(X,\X,\mu)$ such that for all $n\in \Z$
$$\min_{1\le j\le l} ||T^nf-g_j||_y<\ep$$
for $\nu$ almost every $y\in Y$. One writes  $f\in AP(\Y)$.
Let $K(X|Y, T)$ be the closed subspace of $L^2(X)$ spanned by the
almost periodic functions over $\Y$. When $\Y$ is trivial,
$K(X,T)=K(X|Y,T)$ is the closed subspace spanned by eigenfunctions
of $T$.

$X$ is an {\em isometric extensions} of $Y$ if $K(X|Y,Y)=L^2(X)$ and
it is a {\em relatively weak mixing extension} of $Y$ if
$K(X|Y,T)=L^2(Y)$.

\begin{thm}\cite[Lemma 6.7.]{F77}\label{thm-F1}
For all $n\in \N$, we have $K(X|Y,T^n)=K(X|Y, T)$.
\end{thm}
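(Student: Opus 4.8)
The plan is to prove the stronger statement that the two spaces have the \emph{same} spanning set. Write $AP_T(\Y)$ for the collection of functions almost periodic over $\Y$ with respect to $T$, so that $K(X|Y,T)$ is by definition its closed linear span, and write $AP_{T^n}(\Y)$ for the analogous collection built from the orbits $\{T^{nm}f:m\in\Z\}$ of the power $T^n$. It suffices to show $AP_T(\Y)=AP_{T^n}(\Y)$; the equality of the closed spans, hence of $K(X|Y,T)$ and $K(X|Y,T^n)$, is then immediate. Throughout I write $\|\cdot\|_y$ for the $L^2(\mu_y)$-norm, where $\mu=\int_Y\mu_y\,d\nu(y)$ is the disintegration of $\mu$ over $\nu$.

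The inclusion $AP_T(\Y)\subseteq AP_{T^n}(\Y)$ is essentially formal. If $f\in AP_T(\Y)$ then, given $\ep>0$, a finite family $g_1,\dots,g_l$ witnesses $\min_{1\le j\le l}\|T^kf-g_j\|_y<\ep$ for every time $k\in\Z$ and $\nu$-a.e.\ $y$. Since the $T^n$-orbit $\{T^{nm}f:m\in\Z\}$ is contained in the full orbit $\{T^kf:k\in\Z\}$, the very same family witnesses almost periodicity of $f$ with respect to $T^n$, so $f\in AP_{T^n}(\Y)$.

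The substance lies in the reverse inclusion $AP_{T^n}(\Y)\subseteq AP_T(\Y)$. The ingredient I would isolate first is a fibrewise isometry: because $\pi\circ T=S\circ\pi$, the conditional measures satisfy $T_*\mu_y=\mu_{Sy}$ for $\nu$-a.e.\ $y$, and hence
\begin{equation*}
\|T^i h\|_y=\|h\|_{S^i y}\quad\text{for all } i\in\Z,\ h\in L^2(\mu),\ \text{and } \nu\text{-a.e. } y.
\end{equation*}
Now take $f\in AP_{T^n}(\Y)$ and $\ep>0$, and let $g_1,\dots,g_l$ be a net for $f$ relative to $T^n$. I would enlarge it to the finite family $\{T^i g_j:0\le i\le n-1,\ 1\le j\le l\}$. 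For an arbitrary time $k\in\Z$, write $k=nm+i$ with $0\le i\le n-1$; then $T^kf=T^i(T^{nm}f)$, and using the isometry relation together with the $S$-invariance of $\nu$,
\begin{equation*}
\min_{i',j}\|T^kf-T^{i'}g_j\|_y\le\min_{j}\|T^i(T^{nm}f-g_j)\|_y=\min_{j}\|T^{nm}f-g_j\|_{S^i y}<\ep
\end{equation*}
for $\nu$-a.e.\ $y$: the exceptional null set for the final inequality is the $S^{-i}$-image of the null set supplied by the definition of $AP_{T^n}(\Y)$ at time $m$, which is again null since $S$ preserves $\nu$. Thus the enlarged family witnesses $f\in AP_T(\Y)$.

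I expect the main obstacle to be bookkeeping rather than conceptual. One must carefully verify the fibrewise identity $T_*\mu_y=\mu_{Sy}$ from the uniqueness of disintegration, and then keep track of the fact that in the definition of almost periodicity the $\nu$-null exceptional set is allowed to depend on the time; the whole argument hinges on checking that this exceptional set stays null after the shift by $S^i$. Once these points are settled, combining the two inclusions gives $AP_T(\Y)=AP_{T^n}(\Y)$ and therefore $K(X|Y,T^n)=K(X|Y,T)$, as claimed.
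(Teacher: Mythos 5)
Your proposal is correct. Note that the paper itself gives no proof of this statement at all: it is quoted verbatim from Furstenberg \cite{F77} (Lemma 6.7), so there is no internal argument to compare against, and what you have written is a sound, self-contained, elementary proof working directly from the definition of $AP(\Y)$ given in the paper's appendix. The easy inclusion $AP_T(\Y)\subseteq AP_{T^n}(\Y)$ is handled correctly (a sub-orbit is approximated by any net for the full orbit), and the substantive inclusion rests on exactly the two points you isolate: the identity $T_*\mu_y=\mu_{Sy}$, which does follow from uniqueness of disintegration because $y\mapsto T_*\mu_{S^{-1}y}$ is another disintegration of $T_*\mu=\mu$ over $\nu$ supported on the correct fibers (using $\pi\circ T=S\circ\pi$); and the fact that the time-dependent exceptional null set at time $k=nm+i$ is the $S^{-i}$-preimage of the null set at time $m$, hence still $\nu$-null. (You write ``image'' where ``preimage'' is meant, but since $S$ is an invertible measure-preserving map this is harmless.) The resulting enlarged family $\{T^{i}g_j: 0\le i\le n-1,\ 1\le j\le l\}$ is finite and lies in $L^2(\mu)$, so it is a legitimate net, and the conclusion $K(X|Y,T^n)=K(X|Y,T)$ follows by taking closed linear spans. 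This is essentially the standard argument (and in the same spirit as Furstenberg's original one); your version has the merit of being stated at the level of the spanning sets $AP$ rather than only of the closed subspaces.
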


\begin{thm}\cite[Theorem 7.1.]{F77}\label{thm-F2}
$\displaystyle K(X_1
\mathop{\times}_{Y}X_2|Y,T)=K(X_1|Y,T)\mathop{\otimes}_{Y}K(X_2|Y,T)$.
\end{thm}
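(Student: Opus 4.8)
The plan is to prove the two inclusions separately. Write $W=X_1\mathop{\times}_Y X_2$ for the relatively independent joining, with disintegration $\mu^W=\int_Y\mu_{1,y}\times\mu_{2,y}\,d\nu(y)$, and set $H_i=K(X_i|Y,T)$. Throughout I would use the fibre identity $\|\phi\otimes\psi\|_{W,y}=\|\phi\|_{1,y}\,\|\psi\|_{2,y}$ together with $T^n(\phi\otimes\psi)=T^n\phi\otimes T^n\psi$, where $\|\cdot\|_{i,y}$ is the $L^2(\mu_{i,y})$ norm and $\|\cdot\|_{W,y}$ the $L^2(\mu_{1,y}\times\mu_{2,y})$ norm. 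I read $H_1\mathop{\otimes}_Y H_2$ as the closed span in $L^2(W)$ of the products $\phi\otimes\psi$ with $\phi\in H_1$, $\psi\in H_2$; since products are dense in the $L^2$ of any relative joining, this space is exactly $L^2(M)$ for $M=\hat X_1\mathop{\times}_Y\hat X_2$, where $\hat X_i$ is the maximal isometric factor of $X_i$ over $Y$ (so $H_i=L^2(\hat X_i)$ and $\E(\cdot|\hat\X_i)$ is the orthogonal projection onto $H_i$). This last identification of $H_i$ with $L^2$ of a factor is the standard structural fact I shall take as an ingredient.

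For $H_1\mathop{\otimes}_Y H_2\subseteq K(W|Y,T)$ I would first reduce to bounded $f_1\in H_1$, $f_2\in H_2$ (bounded almost periodic functions are dense in $H_i$) and show directly that $f_1\otimes f_2$ is almost periodic over $Y$. Given $\ep>0$, choose finite families $\{g_i\}$, $\{h_j\}$ witnessing almost periodicity of $f_1,f_2$; the finite family $\{g_i\otimes h_j\}$ then witnesses it for $f_1\otimes f_2$, since from $T^nf_1\cdot T^nf_2-g_ih_j=(T^nf_1-g_i)T^nf_2+g_i(T^nf_2-h_j)$ and the fibre identity one gets, for suitable $i,j$ depending on $(n,y)$,
\[\|T^n(f_1\otimes f_2)-g_i\otimes h_j\|_{W,y}\le \|f_2\|_\infty\,\|T^nf_1-g_i\|_{1,y}+\|g_i\|_\infty\,\|T^nf_2-h_j\|_{2,y}<2\ep\]
for $\nu$-a.e.\ $y$ and all $n$. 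Passing to closures gives the inclusion.

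For the reverse inclusion I would establish the orthogonal splitting $L^2(W)=L^2(M)\oplus V$, where $V$ is the closed span of products $\phi\otimes\psi$ with $\E(\phi|\hat\X_1)=0$ or $\E(\psi|\hat\X_2)=0$. This rests on two points: products span $L^2(W)$, and relative independence turns the $W$-inner product of two products into $\langle\alpha\otimes\beta,\phi\otimes\psi\rangle_W=\int_Y\E(\alpha\bar\phi|\Y)\,\E(\beta\bar\psi|\Y)\,d\nu$. For $\alpha\in H_1$ and $\phi\perp H_1$ one has $\E(\alpha\bar\phi|\Y)=\E\big(\alpha\,\overline{\E(\phi|\hat\X_1)}\,\big|\,\Y\big)=0$, so every generator of $V$ is orthogonal to $H_1\mathop{\otimes}_Y H_2=L^2(M)$; decomposing each $\phi\otimes\psi$ into its four pieces relative to $\hat\X_1,\hat\X_2$ shows $L^2(M)$ and $V$ together span. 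Hence $L^2(M)^\perp=V$, and it remains to prove $V\subseteq K(W|Y,T)^\perp$.

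To finish I would invoke the characterization of $K(X|Y,T)^\perp$ as the relatively weakly mixing functions: $\phi\perp K(X_1|Y,T)$ iff $\frac1N\sum_{n=0}^{N-1}\|\E(\overline{T^n\phi}\cdot g|\Y)\|_{L^2(\nu)}^2\to0$ for all $g$. Granting this, take a generator $\phi\otimes\psi$ of $V$, say with $\phi\perp H_1$; by density it suffices to test against products, and relative independence gives
\[\E\big(\overline{T^n(\phi\otimes\psi)}\,(\phi'\otimes\psi')\,\big|\,\Y\big)=\E(\overline{T^n\phi}\,\phi'|\Y)\,\E(\overline{T^n\psi}\,\psi'|\Y).\]
Bounding the second factor by $\|\psi\|_\infty\|\psi'\|_\infty$ reduces the Cesàro average for $\phi\otimes\psi$ to the one for $\phi$, which tends to $0$; thus $\phi\otimes\psi\in K(W|Y,T)^\perp$, whence $V\subseteq K(W|Y,T)^\perp$ and $K(W|Y,T)\subseteq L^2(M)=H_1\mathop{\otimes}_Y H_2$. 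The main obstacle is the relatively weakly mixing characterization itself — the equivalence of orthogonality to the almost periodic functions over $Y$ with the Cesàro–mixing condition above — which is exactly where the relative van der Corput inequality enters; the rest is bookkeeping with the fibre identity and relative independence.
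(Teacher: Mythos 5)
This theorem is quoted in the paper directly from Furstenberg \cite{F77} with no proof supplied, so there is no in-paper argument to compare yours against; judged on its own terms, your proposal is essentially the classical Furstenberg--Zimmer argument and is correct modulo the two ingredients you explicitly flag. The easy inclusion (products of almost periodic functions are almost periodic, via the fibrewise factorization $\|\phi\otimes\psi\|_{W,y}=\|\phi\|_{1,y}\|\psi\|_{2,y}$) works, with one small caveat: the definition of almost periodicity lets the approximants $g_1,\dots,g_l$ be arbitrary $L^2$ functions, while your bound uses $\|g_i\|_\infty$; you should note that each $g_i$ may be truncated at level $\|f_1\|_\infty$ without increasing any fibre distance to $T^nf_1$, so bounded approximants can always be arranged (and the resulting constant is $(\|f_2\|_\infty+\|g_i\|_\infty)\ep$ rather than $2\ep$, which is harmless). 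The reverse inclusion is where the substance lies, and you have located it correctly: you import (a) that $K(X_i|Y,T)$ is the $L^2$-space of a factor $\hat X_i$ (the maximal isometric extension of $Y$ inside $X_i$), and (b) the dichotomy that $\phi\perp K(X_1|Y,T)$ forces $\frac1N\sum_{n}\|\E(\overline{T^n\phi}\,\phi'|\Y)\|_{L^2(\nu)}^2\to0$ for all bounded $\phi'$. Both are earlier theorems of \cite{F77} (the second resting on the relative van der Corput / Hilbert--Schmidt argument), so invoking them is legitimate, but be aware that (b) carries most of the weight of the statement being proved. Granting (a) and (b), your orthogonal splitting $L^2(W)=L^2(M)\oplus V$, the vanishing of the cross inner products via $\E(\alpha\bar\phi|\Y)=\E\bigl(\alpha\,\overline{\E(\phi|\hat\X_1)}\,\big|\,\Y\bigr)=0$, and the reduction of the Ces\`aro condition for $\phi\otimes\psi$ to that for $\phi$ by relative independence are all sound; the passage from product test functions to general $g$ does use the subadditivity and $L^2$-continuity of $g\mapsto\limsup_N\frac1N\sum_n\|\E(\overline{T^nh}\,g|\Y)\|^2$, which you leave implicit but which follow from Cauchy--Schwarz. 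No genuine gap.
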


\begin{thm}\cite[Theorem 9.5.]{F77}\label{thm-F3}
Let $k\in \N$. Assume $(X_i,\X_i,\mu_i,T_i)$ is an extension of
$(Y_i,\Y_i,\nu_i,T_i)$ and each $T_i$ has only finitely many ergodic
components for all $i\in \{1,2,\ldots,k\}$. Let $\mu$ is a
conditional product measure with respect to a joining $\nu$ over
$Y_i$. Let $(Y'_i,\Y_i',\nu_i',T_i)$ be the largest isometric
extension of $Y_i$ in $X_i$, and $\pi_i': X_i\rightarrow Y'_i$ be the
factor map for all $i$. Then almost all ergodic components of $\mu$
are conditional product measures relative to $Y'=\prod Y_i'$.

Equivalently, if $F\in L^2(\mu)$ is invariant under $T_1\times
T_2\times\ldots \times T_k$, then there exists a function $\Phi\in
L^2(\prod Y'_i, \nu')$ for $\nu'$ the image of $\mu$, so that
$$F(x_1,x_2,\ldots,x_k)=\Phi(\pi'_1(x_1),\ldots,\pi'_k(x_k)).$$
\end{thm}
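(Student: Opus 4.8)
The plan is to prove the second, ``equivalently'' formulation: that every $F\in L^2(\mu)$ invariant under $S:=T_1\times\cdots\times T_k$ is measurable with respect to $\prod_{i=1}^k Y_i'$ (the first formulation then follows by disintegrating $\mu$ into its $S$-ergodic components). The starting point is the relative Furstenberg structure theory underlying the other results in \cite{F77}: by definition of the largest isometric extension one has $L^2(Y_i')=K(X_i|Y_i,T_i)$, and consequently each extension $\pi_i':X_i\to Y_i'$ is \emph{relatively weak mixing}, i.e. $K(X_i|Y_i',T_i)=L^2(Y_i')$. Since each $Y_i'$ is $T_i$-invariant, $\prod_i Y_i'$ is $S$-invariant, so it suffices to prove that the invariant $\sigma$-algebra $\I(\prod_i X_i,\mu,S)$ is measurable with respect to $\prod_i Y_i'$. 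This is the exact analogue of the Proposition preceding Corollary~\ref{AP-cor-ergodic}, and I would organize the argument in the same way.

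Following the pattern of Lemma~\ref{AP-lem-vdc2}, I would reduce the measurability statement to elementary tensors. Using the telescoping identity of Lemma~\ref{lem-product} to expand $\bigotimes_i f_i-\bigotimes_i \E(f_i|Y_i')$, it is enough to show
\begin{equation*}
\E\Big(\bigotimes_{i=1}^k f_i\,\Big|\,\I\big(\textstyle\prod_i X_i,\mu,S\big)\Big)=0
\qquad\text{whenever } \E(f_k|Y_k')=0 \text{ for some }k .
\end{equation*}
Indeed, granting this, every tensor $\bigotimes_i \E(f_i|Y_i')$ (which is $\prod_iY_i'$-measurable, hence so is its $\I(S)$-conditional expectation since $\prod_iY_i'$ is $S$-invariant) survives, while every complementary term dies; approximating a bounded $\I(S)$-measurable function by finite sums of such conditional expectations then forces it to be $\prod_iY_i'$-measurable, and an invariant $F$ is in particular $\I(S)$-measurable.

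The vanishing of the above conditional expectation I would obtain by a van der Corput estimate, exactly as in Lemma~\ref{AP-lem-vdc}. Set $\xi_n=S^n\bigotimes_i f_i$. Here the conditional product structure of $\mu$ over the joining $\nu$ is essential: writing $G_i^{(h)}:=\E(T_i^h f_i\cdot\bar f_i\,|\,Y_i)$, a short computation gives that the correlation $\int \xi_{n+h}\,\overline{\xi_n}\,d\mu=\int \prod_i T_i^nG_i^{(h)}\,d\nu$ is, by invariance of $\nu$, independent of $n$ and equal to $\int_{\prod_iY_i}\prod_i G_i^{(h)}\,d\nu$. Pulling out the $k$-th coordinate by Cauchy--Schwarz and using $\|G_i^{(h)}\|_\infty\le\|f_i\|_\infty^2$ bounds this by $C\,\|G_k^{(h)}\|_{L^2(Y_k)}\le C\,\|\E(T_k^h f_k\cdot\bar f_k\,|\,Y_k')\|_{L^2(Y_k')}$, the last inequality by monotonicity of the conditional-expectation norm along $Y_k\subset Y_k'$. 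The van der Corput lemma (Lemma~\ref{vanderCoput}) then reduces everything to showing the Ces\`aro average in $h$ of these norms tends to $0$. This is precisely the relative weak mixing input: applying Theorem~\ref{thm-F2} to $X_k\times_{Y_k'}X_k$ gives $K(X_k\times_{Y_k'}X_k\,|\,Y_k')=L^2(Y_k')$, so the self-joining is relatively ergodic over $Y_k'$; since $\E(f_k\otimes\bar f_k\,|\,Y_k')=|\E(f_k|Y_k')|^2=0$, the relative mean ergodic theorem yields $\frac1H\sum_{h<H}\|\E(T_k^h f_k\cdot\bar f_k\,|\,Y_k')\|_{L^2(Y_k')}^2\to 0$. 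Hence the bound in the van der Corput inequality vanishes and $\frac1N\sum_{n<N}\xi_n\to 0$ in $L^2(\mu)$, giving the claimed identity.

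The main obstacle I anticipate is making the relative weak mixing estimate available in the correct, uniform form over the bases. Relative ergodicity of $X_k\times_{Y_k'}X_k$ over $Y_k'$ produces the mean-ergodic decay only once one knows the relevant averages converge and that the invariant functions of the self-joining reduce to $\I(Y_k',T_k)$; this is exactly where the hypothesis that each $T_i$ has only finitely many ergodic components enters, guaranteeing convergence of the ergodic averages and a well-behaved (finite) ergodic decomposition on each base. A secondary bookkeeping issue arises only if one insists on the first formulation verbatim: one must then pass from the given joining $\nu$ of the $Y_i$ to the joining $\nu':=(\prod_i\pi_i')_*\mu$ of the $Y_i'$ and check the conditional-product structure is inherited, using the tower $Y_i\subset Y_i'\subset X_i$ together with Theorem~\ref{thm-F1}. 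For the invariant-function statement, however, this reduction can be bypassed, since the factorization of the correlation integrals above works directly over $\nu$. Once the relative weak mixing estimate is secured, the induction on $k$, peeling off one coordinate at a time, is routine.
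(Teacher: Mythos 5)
The paper does not prove this statement: it is quoted verbatim from Furstenberg \cite[Theorem 9.5]{F77} as background in the appendix, so there is no internal proof to compare against. Your overall architecture (telescoping via Lemma \ref{lem-product}, the van der Corput reduction of the correlation integral to $\int_{\prod Y_i}\prod_i G_i^{(h)}\,d\nu$ using the conditional product structure and the invariance of $\nu$, and a mean ergodic theorem on a relative self-joining to kill the Ces\`aro average in $h$) is sound and is essentially Furstenberg's own strategy. But there is one genuine error at the decisive step.

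You assert that, because $L^2(Y_i')=K(X_i|Y_i,T_i)$, ``consequently'' $\pi_i':X_i\to Y_i'$ is relatively weak mixing, i.e.\ $K(X_i|Y_i',T_i)=L^2(Y_i')$. This implication is false: the largest isometric extension of $Y_i$ inside $X_i$ need not exhaust the almost periodic functions relative to \emph{itself}. (Take $X$ a $2$-step nilsystem and $Y$ trivial: then $Y'$ is the Kronecker factor, and $X\to Y'$ is itself an isometric extension, so $K(X|Y',T)=L^2(X)\neq L^2(Y')$. The whole point of the Furstenberg--Zimmer tower is that one isometric step does not suffice.) Since your terminal decay estimate $\frac1H\sum_{h<H}\|\E(T_k^hf_k\cdot\bar f_k\,|\,Y_k')\|^2\to 0$ is derived from this false relative weak mixing, and since $\E(f_k|Y_k')=0$ does \emph{not} in general imply $f_k\perp K(X_k|Y_k',T_k)$, that step collapses as written. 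The repair is to not pass to $Y_k'$ at all in the estimate: your Cauchy--Schwarz bound already involves only $\|\E(T_k^hf_k\cdot\bar f_k\,|\,Y_k)\|_{L^2(Y_k)}$, conditioned on the \emph{original} base $Y_k$; and the hypothesis $\E(f_k|Y_k')=0$ says exactly $f_k\perp K(X_k|Y_k,T_k)$. Applying Theorem \ref{thm-F2} over $Y_k$ gives $\I(X_k\times_{Y_k}X_k)\subset K(X_k|Y_k,T_k)\otimes_{Y_k}K(X_k|Y_k,T_k)$, to which $f_k\otimes\bar f_k$ is orthogonal, and the mean ergodic theorem on $X_k\times_{Y_k}X_k$ then yields $\frac1H\sum_{h<H}\|\E(T_k^hf_k\cdot\bar f_k\,|\,Y_k)\|_{L^2(Y_k)}^2\to 0$, which is all the van der Corput inequality needs. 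Your detour through $Y_k'$ replaced the quantity to be controlled by a larger one and then demanded a stronger, generally false, input to control it. Separately, your deferral of the check that $\mu$ remains a conditional product measure over $\nu'=(\prod_i\pi_i')_*\mu$ (needed for the first formulation, not just containment of the invariant algebra in $\prod_i Y_i'$) is a real omission, though you flag it and it is routine given the tower $Y_i\subset Y_i'\subset X_i$.
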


\section{The pointwise ergodic theorem for amenable groups}

\subsubsection{}

Amenability has many equivalent formulations; for us, the most
convenient definition is that a locally compact group $G$ is {\em
amenable} if for any compact $K \subset G$ and $\d>0$ there is a
compact set $F\subset G$ such that $$|F\Delta KF|<\d |F|,$$ where we
use both $|\cdot |$ and $m$ to denote the left Haar measure on $G$
(for discrete $G$, we take this to be the counting measure on $G$).
Such a set $F$ will be called {\em $(K, \d)$-invariant}. A sequence
$F_1$, $F_2$, $\ldots$ of compact subsets of $G$ will be called a
{\em F{\rm ${\o}$}lner sequence} if for every compact $K$ and $\d >
0$, for all large enough $n$ we have that $F_n$ is $(K,
\d)$-invariant. Here all groups are assumed to be locally compact
second countable.

\subsubsection{}

Suppose now that $G$ acts bi-measurably from the left by measure
preserving transformations on a Lebesgue space $(X,\mathcal{B},
\mu)$ with $\mu(X) = 1$. We will use for any $f : X \rightarrow \R$
the symbol $\A (F, f )(x)=\A_F(f)$ to denote the average
$$\A(F,f)(x)=\frac{1}{|F|}\int_F f(gx)\ d m(g).$$

\begin{de}
A F{\rm ${\o}$}lner sequence $F_n$ will be said to be {\em tempered}
if for some $C > 0$ and all $n$
\begin{equation}\label{}
    \left|\bigcup_{k\le n}F^{-1}_kF_n\right |\le C\left|F_n\right|.
\end{equation}
\end{de}

\begin{thm}[Lindenstrauss \cite{Lin}]\label{Lindenstrauss} Let $G$ be an amenable
group acting on a measure space $(X,\mathcal{B},\mu)$ by measure
preserving transformation, and let $F_n$ be a tempered F{\rm
{\o}}lner sequence. Then for any $f\in L^1(\mu)$, there is a
$G$-invariant $f^*\in L^1(\mu)$ such that
$$\lim_{n\to \infty} \A(F_n , f)(x) = f^* (x)\quad a.e.$$
In particular, if the $G$ action is ergodic,
$$\lim_{ n\to \infty} \A(F_n , f
)(x) = \int f(x)\ d\mu (x)\quad  a.e.$$

\end{thm}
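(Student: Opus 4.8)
The plan is to follow the two classical pillars of every pointwise ergodic theorem — a weak-type maximal inequality and convergence on a dense class — and to combine them by the Banach principle. Concretely, I would first reduce the theorem to the assertion that the maximal operator $f\mapsto \sup_n |\A(F_n,f)|$ satisfies a weak-$(1,1)$ inequality
$$ \mu\Big(\Big\{x: \sup_n |\A(F_n,f)(x)|>\ll\Big\}\Big)\le \frac{C}{\ll}\,\|f\|_1 \qquad (f\in L^1(\mu)), $$
with $C$ depending only on the temperedness constant of $(F_n)$. Granting this, the set of $f\in L^1(\mu)$ for which $\A(F_n,f)$ converges a.e. is closed in $L^1$; so it suffices to exhibit a dense subclass on which convergence holds, and the a.e. limit is then automatically $G$-invariant, hence equal to $\E(f|\I)$ (and to $\int f\,d\mu$ in the ergodic case) by the $L^1$ mean ergodic theorem.

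For the dense class I would invoke the mean ergodic theorem for amenable actions: since $(F_n)$ is a F{\o}lner sequence, $\A(F_n,\cdot)\to \E(\cdot|\I)$ in $L^2(\mu)$, and $L^2(\mu)$ is spanned by the $G$-invariant functions together with the closed span of the coboundaries $h-h\circ\alpha_s$ with $h\in L^\infty$, $s\in G$. On invariant functions the averages are constant and converge trivially; on a coboundary the (left) F{\o}lner property gives $\|\A(F_n,h-h\circ\alpha_s)\|_1\le |F_n\,\triangle\, sF_n|\,\|h\|_\infty/|F_n|\to 0$ uniformly. Hence on the dense subspace of finite combinations of invariants and coboundaries the averages converge a.e., and the maximal inequality of the first step propagates this to all of $L^1(\mu)$. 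Thus the entire burden falls on that inequality.

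The maximal inequality is the heart of the matter, and here I would transfer the problem to a purely combinatorial covering statement on $G$: by a transference argument of Calder\'on type one fixes a large tempered F{\o}lner set, restricts the relevant orbit data to it, and thereby reduces the weak-$(1,1)$ bound for the action to a covering lemma for finite families of translates $\{g_iF_{n_i}\}$ of the sets $F_n$ inside $G$, the multiplicity being controlled through the hypothesis $|\bigcup_{k\le n}F_k^{-1}F_n|\le C|F_n|$. The covering lemma I would aim for asserts that from any finite family $\{g_iF_{n_i}\}$ one can select a subfamily whose union covers a definite fraction $\d>0$ of $\bigcup_i g_iF_{n_i}$ while keeping the overlap function $\sum_i \mathbf{1}_{g_iF_{n_i}}$ bounded in $L^1$ by $C'|\bigcup_i g_iF_{n_i}|$. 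The weak-$(1,1)$ inequality then follows from the standard distribution-function/covering argument.

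The genuinely hard step — and the one where a naive approach collapses — is precisely this covering lemma. For $\Z$ or $\R^d$ one has Vitali/Wiener covering, but for a general amenable group the translated F{\o}lner sets need neither nest nor be comparable, so no deterministic greedy selection delivers bounded overlap. The device I would use, and which I expect to be the crux, is \emph{randomization}: process the selected centers in a uniformly random order and greedily retain $g_iF_{n_i}$ only when it is not yet substantially covered by previously retained sets. The tempered condition is exactly what bounds, in expectation, both the overlap multiplicity and the fraction of $\bigcup_i g_iF_{n_i}$ left uncovered, so that some realization of the random order yields the desired subfamily with constants depending only on $C$. Making this probabilistic selection precise, and checking that temperedness supplies the uniform expectation bounds, is where essentially all of the difficulty is concentrated.
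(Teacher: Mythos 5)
This theorem is not proved in the paper at all: it is imported verbatim from Lindenstrauss \cite{Lin} (Appendix B states it with the citation and no argument), so there is no in-paper proof to compare against. Measured against the actual proof in \cite{Lin}, your outline reproduces its architecture faithfully: the reduction via the Banach principle to a weak-$(1,1)$ maximal inequality, the dense class of invariant functions plus coboundaries coming from the mean ergodic theorem (and your pointwise bound $\|h\|_\infty\,|F_n\,\triangle\, sF_n|/|F_n|\to 0$ on coboundaries is the right one), the Calder\'on-type transference to a combinatorial covering statement on $G$, and finally the randomized greedy selection whose expected overlap and expected uncovered fraction are controlled by the temperedness constant. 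This is indeed exactly where the condition $|\bigcup_{k\le n}F_k^{-1}F_n|\le C|F_n|$ is used, and your identification of the randomization as the essential new device is correct.

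However, as a proof the proposal has a genuine gap, and it is the one you yourself flag: the covering lemma is described but not established, and that lemma \emph{is} the theorem. Everything before it (Banach principle, dense class, transference) is routine and was available long before \cite{Lin}; the entire content of Lindenstrauss's result is the construction of the random selection --- in the actual argument one processes the scales $n_i$ from largest to smallest, at each scale adjoins centers independently with a carefully chosen retention probability conditioned on the part of $\bigcup_i g_iF_{n_i}$ not yet substantially covered, and then proves two expectation estimates (bounded $L^1$ norm of the multiplicity function, and a definite covered fraction $\d>0$) by an induction in which temperedness enters through the sets $F_k^{-1}F_n$. None of that is carried out here, and no deterministic shortcut exists (as you note, Vitali-type selection fails for general tempered F{\o}lner sequences). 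A secondary point needing care is the transference step itself: for a general amenable $G$ one must restrict orbit data to a large $(K,\d)$-invariant set and control the boundary error, which is standard but not automatic. So the proposal is an accurate road map of the known proof rather than a proof; to stand on its own it must supply the probabilistic covering lemma in full.
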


\section{Uniquely ergodic systems}

In this section we give some conditions for unique ergodicity under
$\Z^d$ actions ($d\in\N$). For completeness a proof is given.

\begin{thm}\label{unique-ergodic}
Let $(X,\Gamma)$ be a topological system, where $\Gamma=\Z^d$. The
following conditions are equivalent.
\begin{enumerate}

  \item $(X,\Gamma)$ is uniquely ergodic.

  \item For every continuous function $f\in C(X)$ the sequence of
  functions
  \begin{equation}\label{}
    \A_Nf(x)=\frac {1}{N^d} \sum_{\gamma \in [0,N-1]^d} f(\gamma x)
  \end{equation}
  converges uniformly to a constant function.

  \item For every continuous function $f\in C(X)$ the sequence of
  functions $\A_Nf(x)$ converges pointwise to a constant function.

 \item There exists a $\mu\in M_\Gamma(X)$ such that
 for all continuous function $f\in C(X)$ and all $x\in X$ the sequence of
  functions \begin{equation}\label{}
    \A_Nf(x)\lra \int f\ d\mu, \ N\to \infty.
  \end{equation}
\end{enumerate}
\end{thm}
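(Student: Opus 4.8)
The plan is to prove the four conditions equivalent by running the cycle $(1)\Rightarrow(2)\Rightarrow(3)\Rightarrow(4)\Rightarrow(1)$, which is the classical Oxtoby argument adapted to the F{\o}lner sequence $F_N=[0,N-1]^d$ in $\Gamma=\Z^d$. Write $T_1,\ldots,T_d$ for the commuting homeomorphisms generating the action (so $T_ix=e_ix$, where $e_1,\ldots,e_d$ is the standard basis of $\Z^d$), and note $\A_N(f\circ T_i)(x)=\frac{1}{N^d}\sum_{\gamma\in e_i+F_N}f(\gamma x)$. The recurring elementary fact is that $F_N$ is a F{\o}lner sequence, i.e.\ $|F_N\triangle(e_i+F_N)|/|F_N|\to 0$ for each $i$; consequently $\|\A_N(f\circ T_i)-\A_Nf\|_\infty\le \|f\|_\infty\,|F_N\triangle(e_i+F_N)|/N^d\to 0$, a comparison that is uniform in the base point $x$.

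Three of the implications are soft. The step $(2)\Rightarrow(3)$ is immediate, as uniform convergence entails pointwise convergence. For $(3)\Rightarrow(4)$, if $\A_Nf(x)$ converges pointwise to a constant $c(f)$ for every $f\in C(X)$, then $c$ is a positive linear functional with $c(1)=1$, so the Riesz representation theorem supplies a unique $\mu\in M(X)$ with $c(f)=\int f\,d\mu$; the comparison above gives $c(f\circ T_i)=c(f)$ for each $i$, whence $\mu\in M_\Gamma(X)$. For $(4)\Rightarrow(1)$, I would take an arbitrary $\nu\in M_\Gamma(X)$ and integrate the bounded convergence $\A_Nf\to\int f\,d\mu$ against $\nu$; since $\int\A_Nf\,d\nu=\int f\,d\nu$ by invariance of $\nu$, dominated convergence yields $\int f\,d\nu=\int f\,d\mu$ for all $f\in C(X)$, so $\nu=\mu$ and the system is uniquely ergodic.

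The substance is $(1)\Rightarrow(2)$, which I would prove by contradiction. Assume unique ergodicity with unique invariant measure $\mu$ but that uniform convergence fails for some $f\in C(X)$; then there are $\ep>0$, integers $N_k\to\infty$, and points $x_k\in X$ with $|\A_{N_k}f(x_k)-\int f\,d\mu|\ge\ep$. Form the empirical measures $\mu_k=\frac{1}{N_k^d}\sum_{\gamma\in F_{N_k}}\d_{\gamma x_k}\in M(X)$, so that $\int f\,d\mu_k=\A_{N_k}f(x_k)$, and pass by weak$^*$ compactness of $M(X)$ to a subsequence with $\mu_k\to\nu$. The F{\o}lner property forces $\nu$ to be invariant: for each generator $T_i$ and each $g\in C(X)$ the difference $\int g\circ T_i\,d\mu_k-\int g\,d\mu_k$ is bounded by $\|g\|_\infty\,|F_{N_k}\triangle(e_i+F_{N_k})|/N_k^d\to 0$, so in the limit $\int g\circ T_i\,d\nu=\int g\,d\nu$, giving $\nu\in M_\Gamma(X)$. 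Unique ergodicity then yields $\nu=\mu$, and hence $\A_{N_k}f(x_k)=\int f\,d\mu_k\to\int f\,d\nu=\int f\,d\mu$ along the subsequence, contradicting the choice of $x_k$.

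The genuine obstacle is precisely this last implication, and within it the verification that every weak$^*$ limit of the empirical measures $\mu_k$ is $\Gamma$-invariant; everything rests on the F{\o}lner estimate $|F_N\triangle(e_i+F_N)|/|F_N|\to 0$ and on the fact that the comparison of $\A_N(f\circ T_i)$ with $\A_Nf$ is uniform in the base point (only the size of the symmetric difference enters). The other three implications require nothing beyond Riesz representation, bounded convergence, and invariance bookkeeping.
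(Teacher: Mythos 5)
Your proposal is correct and follows essentially the same route as the paper: the cycle $(1)\Rightarrow(2)\Rightarrow(3)\Rightarrow(4)\Rightarrow(1)$, with Riesz representation for $(3)\Rightarrow(4)$, dominated convergence against an arbitrary invariant $\nu$ for $(4)\Rightarrow(1)$, and a contradiction via weak$^*$ limits of empirical measures for $(1)\Rightarrow(2)$. Your explicit F{\o}lner estimate justifying invariance of the limit measure is a point the paper leaves as ``easy to check,'' so your write-up is if anything slightly more complete.
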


\begin{proof}
$``(2)\Rightarrow (3)''$ is obvious.

$``(3)\Rightarrow (4)''$:\quad Define a functional $\Phi:
C(X)\rightarrow \C$ by $$f\mapsto  \lim_{N\to
\infty}\A_Nf(x)=\lim_{N\to \infty}\frac {1}{N^d} \sum_{\gamma \in
[0,N-1]^d} f(\gamma x)$$ Since $\Big |\frac {1}{N^d} \sum_{\gamma
\in [0,N-1]^d} f(\gamma x)\Big|\le \|f\|_\infty$, it is easy to see
that $\Phi$ a continuous linear positive operator. By Riesz
Representation Theorem, there is some $\mu\in M(X)$ such that
$$\Phi(f)=\int f \ d \mu.$$
Since $\Phi(f\circ \gamma)=\Phi(f)$ for all $\gamma\in \Gamma$, we
have $\displaystyle \int f \ d \gamma \mu= \int f\ d \mu$ for all
$f\in C(X)$. Thus $\gamma \mu=\mu$ for all $\gamma \in \Gamma$ and
hence $\mu\in M_\Gamma(X)$.

$``(4)\Rightarrow (1)''$:\quad Let $\nu\in M_\Gamma(X)$. We will
show that $\nu=\mu$. By assumption for all $x\in X$, $\A_Nf(x)=\frac
{1}{N^d} \sum_{\gamma \in [0,N-1]^d} f(\gamma x)\lra \int f\ d\mu, \
N\to \infty.$ By Dominated Convergence Theorem
$$\int f \ d\nu=\lim_{N\to \infty} \int \frac {1}{N^d} \sum_{\gamma \in [0,N-1]^d} f(\gamma x)\ d \nu
=\int\int f \ d\mu d\nu=\int f \ d \mu,$$ for all $f\in C(X)$. Thus
$\nu=\mu$.

$``(1)\Rightarrow (2)''$: \quad If $(2)$ does not hold, then there
is some $g\in C(X)$ and $\ep>0$ such that for any $N\in \N$ there is
some $n>N$ and $x_n\in X$ such that
\begin{equation}\label{a5}
    \Big |  \frac {1}{n^d} \sum_{\gamma \in [0,n-1]^d} g(\gamma x_n)-
    \int g \ d\mu  \Big |\ge \ep.
\end{equation}
Let $\displaystyle \mu_n=\frac{1}{n^d}\sum_{\gamma \in
[0,n-1]^d}\d_{\gamma x_n}=\frac{1}{n^d}\sum_{\gamma \in
[0,n-1]^d}\gamma \d_{x_n}$. Then rewrite (\ref{a5}) as
\begin{equation}\label{a6}
    \Big |  \int g \ d \mu_n -
    \int g \ d\mu  \Big |\ge \ep.
\end{equation}
Take a limit point $\mu_\infty$ of $\{\mu_n\}$ in $M(X)$. Then it is
easy to check that $\mu_\infty\in M_\Gamma(X)$ and by (\ref{a6})
$\mu_\infty\neq \mu$. This contradicts $M_\Gamma(X)=\{\mu\}$. The
proof is completed.
\end{proof}

\section{Nilmanifolds and nilsystems}

\subsubsection{}

Let $G$ be a group. For $g, h\in G$, we write $[g, h] =
ghg^{-1}h^{-1}$ for the commutator of $g$ and $h$ and we write
$[A,B]$ for the subgroup spanned by $\{[a, b] : a \in A, b\in B\}$.
The commutator subgroups $G_j$, $j\ge 1$, are defined inductively by
setting $G_1 = G$ and $G_{j+1} = [G_j ,G]$. Let $k \ge 1$ be an
integer. We say that $G$ is {\em $k$-step nilpotent} if $G_{k+1}$ is
the trivial subgroup.

\subsubsection{}

Let $G$ be a $k$-step nilpotent Lie group and $\Gamma$ a discrete
cocompact subgroup of $G$. The compact manifold $X = G/\Gamma$ is
called a {\em $k$-step nilmanifold}. The group $G$ acts on $X$ by
left translations and we write this action as $(g, x)\mapsto gx$.
The Haar measure $\mu$ of $X$ is the unique probability measure on
$X$ invariant under this action. Let $\tau\in G$ and $T$ be the
transformation $x\mapsto \tau x$ of $X$. Then $(X, T, \mu)$ is
called a {\em $k$-step nilsystem}.

\subsubsection{}

For every integer $j\ge 1$, the subgroup $G_j$ and $\Gamma G_j$ are
closed in $G$. It follows that the group $\Gamma_j=\Gamma \cap G_j$
is cocompact in $G_j$.

\subsubsection{} Here are some basic properties of nilsystems:

\begin{thm}\label{nilsystem}
Let $(X = G/\Gamma,\mu , T )$ be a $k$-step nilsystem with $T$ the
translation by the element $t\in G$. Then:

1. $(X, T )$ is uniquely ergodic if and only if $(X,\mu , T )$ is
ergodic if and only if $(X, T )$ is minimal if and only if $(X, T )$
is transitive.

2. Let $Y$ be the closed orbit of some point $x\in X$. Then $Y$ can
be given the structure of a nilmanifold, $Y = H/\Lambda$, where $H$
is a closed subgroup of $G$ containing $t$ and $\Lambda$ is a closed
cocompact subgroup of $H$.

\medskip

\noindent Assume furthermore that $G$ is spanned by the connected
component of the identity and the element $t$. Then:

\medskip

3. The groups $G_j$, $j\ge 2$, are connected.

4. The nilsystem $(X,\mu, T )$ is ergodic if and only if the
rotation induced by $t$ on the compact abelian group $G/ G_2\Gamma$
is ergodic.

5. If the nilsystem $(X,\mu , T )$ is ergodic then its Kronecker
factor is $Z = G/G_2\Gamma$ with the rotation induced by $t$ and
with the natural factor map $X = G/\Gamma\rightarrow G/G_2\Gamma =
Z$.
\end{thm}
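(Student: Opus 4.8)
The plan is to argue by induction on the nilpotency class $k$, using at each stage the factor map $\pi:X=G/\Gamma\to Z=G/G_2\Gamma$, which realizes $X$ as a compact group extension of the rotation on the abelian group $Z$, together with the distality of nilsystems. Throughout, $\mu$ is the Haar measure, which is $T$-invariant, so $M_T(X)\neq\emptyset$. For (1) the easy implications come first: a uniquely ergodic system has ergodic unique invariant measure, and since $\mu$ is invariant this measure is $\mu$, so $(X,\mu,T)$ is ergodic; minimality trivially gives transitivity; and as $\mu$ has full support, ergodicity forces $\mu$-a.e.\ orbit to be dense, so ergodicity gives transitivity. The first substantive implication is that transitivity gives minimality: a nilsystem is a finite tower of isometric extensions, hence distal, and in a distal system every orbit closure is minimal, so a single dense orbit forces the whole system to be minimal.

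The crux of (1) is that minimality implies unique ergodicity, which I would establish by induction on $k$. For $k=1$, $X$ is a minimal rotation on a compact abelian group and is uniquely ergodic by the standard equidistribution argument. For the inductive step I would pass to the $(k-1)$-step factor $X'=G/G_k\Gamma$, which is minimal, hence uniquely ergodic by induction; the extension $X\to X'$ is by the compact abelian group $G_k/\Gamma_k$, and for an arbitrary $\nu\in M_T(X)$ I would show that its projection to $X'$ is the unique invariant measure and then, decomposing $L^2$ along the characters of the fiber group, that $\nu$ itself must be the Haar measure. Part (2) fits the same circle of ideas: for $x=g\Gamma$ the orbit closure $\overline{\{t^nx:n\in\Z\}}$ equals $Hx$ for a closed subgroup $H\ni t$ (the classical orbit-closure description for nilflows), and $Hx=H/\Lambda$ with $\Lambda=H\cap g\Gamma g^{-1}$ cocompact in $H$ is the desired nilmanifold structure.

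For (3)--(5) I would invoke the hypothesis $G=G^\circ\langle t\rangle$. Since $G/G^\circ$ is then generated by the image of $t$ it is abelian, so $G_2=[G,G]\subseteq G^\circ$; each $G_j$ with $j\ge2$ is the closed subgroup generated by commutators drawn from a connected set and is therefore connected, giving (3). For (4), ergodicity passes to the factor $Z=G/G_2\Gamma$ automatically, while the converse is proved by the same induction and fiberwise Fourier analysis as the unique-ergodicity step, showing that no nonconstant $T$-invariant function exists once the rotation by $tG_2\Gamma$ is ergodic on the abelianization. For (5), $Z$ is a group-rotation factor and hence lies inside the Kronecker factor, and the same fiber argument shows every eigenfunction is $Z$-measurable, so $Z$ is exactly the Kronecker factor.

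The main obstacle is the inductive analysis of the group extension $X\to X'$ (equivalently $X\to Z$) that underlies both the minimal-implies-uniquely-ergodic step in (1) and its converse in (4): one must decompose $L^2(X,\mu)$ over the base along the dual of the fiber group and rule out nonzero components outside the trivial character for invariant or eigen-functions, and for $k\ge3$ this requires controlling the nonabelian interaction between successive stages of the lower central series rather than a single clean skew-product computation.
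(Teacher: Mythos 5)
First, a point of reference: the paper does not prove Theorem \ref{nilsystem} at all. It is stated in the appendix as classical background on nilsystems (the Auslander--Green--Hahn, Parry, Lesigne and Leibman circle of results), so there is no internal proof to compare your argument against; the comparison has to be with the classical literature, whose strategy your outline indeed follows.

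Within that outline, the easy implications are handled correctly: unique ergodicity forces $\mu$ to be the ergodic unique invariant measure, full support of Haar measure turns ergodicity into transitivity, distality of nilsystems turns transitivity into minimality, and your argument for part (3) via $G_2\subseteq G^{\circ}$ and connectedness of the generated subgroups is sound. The genuine gap is exactly the step you flag as ``the main obstacle,'' and it is not a removable technicality: the implication ``minimal $\Rightarrow$ uniquely ergodic'' cannot be extracted from the bare fact that $X\to X'=G/G_k\Gamma$ is a compact abelian group extension of a uniquely ergodic base. Furstenberg's skew product $(x,y)\mapsto(x+\alpha,\,y+\varphi(x))$ on the two-torus is a compact abelian group extension of a uniquely ergodic rotation that is minimal but not uniquely ergodic, so the fiberwise character decomposition you propose must invoke the specific algebraic form of the nilsystem cocycle. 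Concretely, what closes the induction is Green's theorem (your part (4), hard direction): ergodicity of the rotation on $G/G_2\Gamma$ forces ergodicity of $(X,\mu,T)$, proved by exploiting the commutator relations between consecutive terms of the lower central series to kill the nontrivial vertical characters; unique ergodicity then follows from the Furstenberg criterion for group extensions with ergodic Haar lift over a uniquely ergodic base. Since parts (4) and (5) are themselves deferred to ``the same fiber argument,'' the proposal is circular at its core rather than complete. The same remark applies to part (2): the identification of orbit closures with $Hx$ for a closed subgroup $H\ni t$ is itself a nontrivial theorem (Lesigne in the connected case, Leibman in general), not a consequence of the framework you set up.
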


\begin{thm}
Let X =$ G/\Gamma$ be a nilmanifold with Haar measure $\mu$ and let
$t_1,\ldots , t_k$ be commuting elements of $G$. If the group
spanned by the translations $t_1, \ldots , t_k$ acts ergodically on
$(X,\mu)$, then X is uniquely ergodic for this group.
\end{thm}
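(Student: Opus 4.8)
The plan is to induct on the nilpotency step $k$ of $G$, exploiting the central extension structure of nilmanifolds together with the fact that $\mu$, being ergodic, is an extreme point of the invariant measures. Write $R$ for the abelian group generated by the translations $t_1,\dots,t_k$, and let $M_R(X)$ be the compact convex set of $R$-invariant Borel probability measures on $X$; recall that the ergodic measures are precisely its extreme points, so by hypothesis $\mu$ is an extreme point. The goal is to prove $M_R(X)=\{\mu\}$.

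For the base case $k=1$ I would argue by Fourier analysis: here $G$ is abelian, $X=G/\Gamma$ is a compact abelian group, and $R$ acts by rotations. If $\nu\in M_R(X)$, then for each character $\chi$ of $X$ the $t_i$-invariance of $\nu$ gives $\widehat\nu(\chi)=\chi(t_i)\widehat\nu(\chi)$, so $\widehat\nu(\chi)=0$ whenever $\chi(t_i)\ne 1$ for some $i$. A nontrivial $\chi$ with $\chi(t_i)=1$ for all $i$ would be a nonconstant $R$-invariant function, contradicting ergodicity of $\mu$; hence $\widehat\nu(\chi)=0=\widehat\mu(\chi)$ for every nontrivial $\chi$, forcing $\nu=\mu$.

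For the inductive step I would assume the result for $(k-1)$-step nilmanifolds and take $G$ to be $k$-step nilpotent. The last nontrivial commutator subgroup $G_k$ is central, since $[G_k,G]=G_{k+1}=\{e\}$, and both $G_k$ and $\Gamma G_k$ are closed; thus $\bar G=G/G_k$ is $(k-1)$-step nilpotent with discrete cocompact subgroup $\bar\Gamma=\Gamma G_k/G_k$, and $\bar X=\bar G/\bar\Gamma$ is a $(k-1)$-step nilmanifold. The quotient $p\colon X\to\bar X$ intertwines the $R$-action with the induced action of $\bar R=\langle\bar t_1,\dots,\bar t_k\rangle$; as a factor of an ergodic system is ergodic, $\bar R$ acts ergodically on $(\bar X,\bar\mu)$, so by induction $\bar X$ is uniquely ergodic. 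Next I would observe that $K:=G_k/(G_k\cap\Gamma)$ is a compact abelian group, and that because $G_k$ is central its translation action on $X$ commutes with every $t_i$, preserves the fibers of $p$, and makes $p\colon X\to\bar X$ a principal $K$-bundle; consequently $\mu$ is the unique $K$-invariant measure on $X$ projecting to $\bar\mu$.

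Finally, given any $\nu\in M_R(X)$, the commutation of $K$ with the $R$-action gives $s_*\nu\in M_R(X)$ for all $s\in K$, and $\tilde\nu:=\int_K s_*\nu\,dm_K(s)$ is $R$-invariant, $K$-invariant, and projects to $p_*\nu=\bar\mu$, whence $\tilde\nu=\mu$ by the previous step. Thus $\mu$ is the barycenter of the measure $(s\mapsto s_*\nu)_*m_K$ on $M_R(X)$, and since $\mu$ is extreme this representation must be trivial, i.e.\ $s_*\nu=\mu$ for $m_K$-almost every $s$; as $s\mapsto s_*\nu$ is weak$^*$-continuous and $m_K$ has full support, it follows that $s_*\nu=\mu$ for every $s$, and $s=e$ gives $\nu=\mu$. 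I expect the main obstacle to lie in the structural part of the inductive step — rigorously identifying $p\colon X\to\bar X$ as a principal $K$-bundle over a $(k-1)$-step nilmanifold with a central fiber action commuting with the dynamics — since once that is in place the extremality argument closes the proof cleanly.
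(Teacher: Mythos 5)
The paper itself offers no proof of this statement: it is quoted in the appendix on nilmanifolds as a known background fact (going back to Parry and Lesigne for a single translation, and to Leibman and the appendix of Bergelson--Host--Kra \cite{BHK05} for several commuting translations), so there is no in-paper argument to measure yours against. Judged on its own merits, your proof is correct and is essentially the classical one: induct on the nilpotency class via the central extension determined by $G_k$, get unique ergodicity of the base $\bar X=G/\Gamma G_k$ from the inductive hypothesis, average an arbitrary invariant measure $\nu$ over the compact central fibre group $K=G_k/(G_k\cap\Gamma)$ to produce a $K$-invariant, $R$-invariant measure over $\bar\mu$ which must be $\mu$, and then invoke the Choquet/Bauer fact that an extreme point of a metrizable compact convex set admits only the trivial representing measure to get $s_*\nu=\mu$ for $m_K$-a.e.\ (hence, by continuity and full support of $m_K$, every) $s$, and finally take $s=e$. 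The structural facts you defer to the end are exactly those the paper records in the same appendix ($G_j$ and $\Gamma G_j$ closed, $\Gamma\cap G_j$ cocompact in $G_j$), which yield that $K$ is a compact abelian group acting freely and transitively on the fibres of $p$ and that $\bar\Gamma=\Gamma G_k/G_k$ is discrete and cocompact, so $\bar X$ is a genuine nilmanifold of lower class; the measure-theoretic part of your argument needs only this, not local triviality of the bundle. Two minor points to tidy up: your $k$ (the nilpotency class) is not the $k$ of the statement (the number of commuting translations) --- the induction is on the class with the number of translations arbitrary; and in the degenerate case $G_k\subseteq\Gamma$ the group $K$ is trivial and the averaging step is vacuous, but then $X\cong(G/G_k)/(\Gamma/G_k)$ is already a nilmanifold of class $k-1$ and the inductive hypothesis applies directly. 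Neither affects the validity of the argument.
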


\section{HK-seminorms}

Let $(X,\mu,T)$ be an ergodic system and $k\in \N$. We write $C:
\C\rightarrow \C$ for the conjugate map $z\mapsto \overline{z}$. Let
$|\ep|=\ep_1+\ldots+\ep_k$ for $\ep\in V_k= \{0,1\}^k$. It is easy
to verify that for all $f\in L^\infty(\mu)$ the integral
$\int_{X^{[k]}}\bigotimes_{\ep\in V_k}
C^{|\ep|}f(x_\ep)d\mu^{[k]}(\bf x)$ is real and nonnegative. Hence
we can define
\begin{equation}\label{}
    \HK f\HK_k=\Big( \int_{X^{[k]}} \bigotimes_{\ep\in V_k}
C^{|\ep|}f(x_\ep)d\mu^{[k]}({\bf x})\Big)^{1/2^k}.
\end{equation}

As $X$ is assumed to be ergodic, the $\sigma$-algebra $\I^{[0]}$ is
trivial and $\mu^{[1]}=\mu \times \mu$. We therefore have
$$\HK f\HK_1=\Big(\int_{X^2}f(x_0)\overline{f(x_1)}d\mu \times\mu(x_0,x_1)\Big)^{1/2}
=\Big|\int fd\mu\Big|.$$

It is showed in \cite{HK05} that $\HK\cdot\HK_k$ is a seminorm on
$L^\infty(\mu)$, and for all $f_\ep\in L^\infty(\mu), \ep\in V_k$,
$$\Big|\int \bigotimes_{\ep\in V_k}f_\ep d\mu^{[k]}\Big|\le \prod_{\ep\in V_k}\HK f_\ep\HK_k.$$
The following lemma follows immediately from the definition of the
measures and the Ergodic Theorem.

\begin{lem}\label{lemmaE1}
For every integer $k\ge 0$ and every $f\in L^\infty(\mu)$, one has
\begin{equation}\label{}
    \HK f\HK_{k+1}=\Big(\lim_{N\to \infty} \frac{1}{N}\sum_{n=0}^{N-1}
    \HK f\cdot T^n \overline{f}\HK_k^{2^k}\Big)^{1/2^{k+1}}.
\end{equation}
\end{lem}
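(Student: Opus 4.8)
The plan is to read the recursion straight off the definition of $\mu^{[k+1]}$ as the relatively independent self-joining of $\mu^{[k]}$ over $\I^{[k]}$, and then to feed the result into the mean Ergodic Theorem. First I would raise both sides to the power $2^{k+1}$, reducing the statement to
\begin{equation*}
\HK f\HK_{k+1}^{2^{k+1}}=\lim_{N\to\infty}\frac1N\sum_{n=0}^{N-1}\HK f\cdot T^n\overline f\HK_k^{2^k}.
\end{equation*}
Taking roots is legitimate because, as noted just before the lemma, the integral defining $\HK\cdot\HK_{k+1}$ is real and nonnegative. Writing each $\ep\in V_{k+1}$ as a concatenation $\ep=\eta i$ with $\eta\in V_k$, $i\in\{0,1\}$, and using $|\eta i|=|\eta|+i$, the integrand $\bigotimes_{\ep\in V_{k+1}}C^{|\ep|}f$ splits as the function $\bigotimes_{\eta\in V_k}C^{|\eta|}f$ of the first copy of $X^{[k]}$ times $\bigotimes_{\eta\in V_k}C^{|\eta|}\overline f$ of the second.

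Setting $\Phi=\bigotimes_{\eta\in V_k}C^{|\eta|}f$, so that $\bigotimes_{\eta\in V_k}C^{|\eta|}\overline f=\overline\Phi$, the defining identity for the relatively independent joining over $\I^{[k]}$ yields
\begin{equation*}
\HK f\HK_{k+1}^{2^{k+1}}=\int_{X^{[k]}}\E\big(\Phi\mid\I^{[k]}\big)\,\E\big(\overline\Phi\mid\I^{[k]}\big)\,d\mu^{[k]}=\int_{X^{[k]}}\big|\E(\Phi\mid\I^{[k]})\big|^2\,d\mu^{[k]},
\end{equation*}
since $\E(\overline\Phi\mid\I^{[k]})=\overline{\E(\Phi\mid\I^{[k]})}$. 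Next I would bring in the Ergodic Theorem. As $\I^{[k]}$ is precisely the $T^{[k]}$-invariant $\sigma$-algebra of $(X^{[k]},\mu^{[k]},T^{[k]})$, the mean Ergodic Theorem gives $\E(\Phi\mid\I^{[k]})=\lim_N\frac1N\sum_{n<N}\Phi\circ(T^{[k]})^n$ in $L^2(\mu^{[k]})$; combining this with the fact that $\E(\cdot\mid\I^{[k]})$ is a self-adjoint idempotent on $L^2(\mu^{[k]})$ gives
\begin{equation*}
\int_{X^{[k]}}\big|\E(\Phi\mid\I^{[k]})\big|^2\,d\mu^{[k]}=\lim_{N\to\infty}\frac1N\sum_{n=0}^{N-1}\int_{X^{[k]}}\Phi\cdot\overline{\Phi\circ(T^{[k]})^n}\,d\mu^{[k]}.
\end{equation*}

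Because $T^{[k]}$ acts diagonally, $\Phi\circ(T^{[k]})^n=\bigotimes_{\eta\in V_k}C^{|\eta|}(T^nf)$, and a coordinatewise check (treating $|\eta|$ even and odd separately) identifies $\Phi\cdot\overline{\Phi\circ(T^{[k]})^n}$ with $\bigotimes_{\eta\in V_k}C^{|\eta|}\big(f\cdot T^n\overline f\big)$. Integrating against $\mu^{[k]}$ recognizes each term as $\HK f\cdot T^n\overline f\HK_k^{2^k}$, and substituting finishes the computation.

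I expect the only genuinely substantive step to be the middle display, i.e. the passage from $\int_{X^{[k]}}|\E(\Phi\mid\I^{[k]})|^2\,d\mu^{[k]}$ to the Ces\`aro limit of the autocorrelations $\int_{X^{[k]}}\Phi\cdot\overline{\Phi\circ(T^{[k]})^n}\,d\mu^{[k]}$; this is exactly the mean Ergodic Theorem together with the orthogonal-projection property of conditional expectation onto $\I^{[k]}$. Everything else is bookkeeping with the bijection $V_{k+1}\cong V_k\times\{0,1\}$ and with the behaviour of the conjugation $C$ under products. The latter is what makes the various conjugates line up so that each autocorrelation is automatically the real nonnegative quantity $\HK f\cdot T^n\overline f\HK_k^{2^k}$, so that no conjugation or sign ambiguity survives in the final formula.
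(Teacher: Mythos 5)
Your argument is correct and is exactly the one the paper intends: the paper offers no written proof beyond the remark that the lemma ``follows immediately from the definition of the measures and the Ergodic Theorem,'' and your computation --- splitting $V_{k+1}\cong V_k\times\{0,1\}$ to express $\HK f\HK_{k+1}^{2^{k+1}}$ as $\int|\E(\Phi\mid\I^{[k]})|^2\,d\mu^{[k]}$ via the relatively independent joining, then invoking the mean Ergodic Theorem and the projection property of $\E(\cdot\mid\I^{[k]})$ --- is precisely that argument spelled out. All the bookkeeping with the conjugation operator $C$ checks out, so nothing further is needed.
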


An important property is

\begin{prop}
For a $f\in L^\infty(\mu)$, $\HK f\HK_k=0$ if and only if
$\E(f|\ZZ_{k-1})=0$.
\end{prop}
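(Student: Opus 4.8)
The plan is to express $\HK f\HK_k^{2^k}$ through a single ``dual function'' on the first coordinate and to identify that function as $\ZZ_{k-1}$-measurable; the ``if'' direction then follows at once, while the ``only if'' direction reduces to the positive-definiteness of $\HK\cdot\HK_k$ on $\ZZ_{k-1}$, which I expect to be the main obstacle.

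First I would isolate the coordinate ${\bf 0}$. Writing ${\bf x}=(x_{\bf 0},{\bf x}_*)$ and $g_*({\bf x}_*)=\bigotimes_{\ep\neq{\bf 0}}C^{|\ep|}f(x_\ep)$, and using $C^{|{\bf 0}|}f=f$, the defining formula of the seminorm reads
\begin{equation*}
\HK f\HK_k^{2^k}=\int_{X^{[k]}}f(x_{\bf 0})\,g_*({\bf x}_*)\ d\mu^{[k]}.
\end{equation*}
Disintegrating $\mu^{[k]}$ over its first marginal $\mu$ and setting $\mathcal{D}f(x_{\bf 0})=\E_{\mu^{[k]}}\big(g_*\,\big|\,\sigma(x_{\bf 0})\big)$, this becomes $\HK f\HK_k^{2^k}=\int_X f\cdot\mathcal{D}f\ d\mu$. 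The key point is that $\mathcal{D}f$ is $\ZZ_{k-1}$-measurable. Indeed, $\F^{[k]}$ is abelian, hence amenable, fixes the coordinate ${\bf 0}$, and preserves both $\mu^{[k]}$ and the marginal $\mu^{[k]}_*$; by the mean ergodic theorem the F{\o}lner averages of $g_*$ under $\F^{[k]}$ converge in $L^2(\mu^{[k]}_*)$ to $\E_{\mu^{[k]}_*}(g_*|\J^{[k]}_*)$, which by the isomorphism $\J^{[k]}_*\cong\ZZ_{k-1}$ equals $h(x_{\bf 0})$ for some $\ZZ_{k-1}$-measurable $h$. Lifting the same averages to $X^{[k]}$ and using that there the $\F^{[k]}$-invariant $\sigma$-algebra $\J^{[k]}$ coincides with $\sigma(x_{\bf 0})$ (\cite[Proposition 3.4]{HK05}), they converge instead to $\E_{\mu^{[k]}}(g_*|\sigma(x_{\bf 0}))=\mathcal{D}f$; hence $\mathcal{D}f=h$ is $\ZZ_{k-1}$-measurable.

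The ``if'' direction is then immediate: if $\E(f|\ZZ_{k-1})=0$, pulling the $\ZZ_{k-1}$-measurable factor $\mathcal{D}f$ inside the conditional expectation gives
\begin{equation*}
\HK f\HK_k^{2^k}=\int_X f\cdot\mathcal{D}f\ d\mu=\int_X \E(f|\ZZ_{k-1})\,\mathcal{D}f\ d\mu=0.
\end{equation*}
For the ``only if'' direction I would first reduce to a positivity statement on $\ZZ_{k-1}$. Since $\HK\cdot\HK_k$ is a seminorm and every $f_0$ with $\E(f_0|\ZZ_{k-1})=0$ has $\HK f_0\HK_k=0$ by the preceding step, the triangle inequality applied to $f=\E(f|\ZZ_{k-1})+\big(f-\E(f|\ZZ_{k-1})\big)$ yields $\HK f\HK_k=\HK\E(f|\ZZ_{k-1})\HK_k$. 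Thus $\HK f\HK_k=0$ forces $\HK g\HK_k=0$ with $g=\E(f|\ZZ_{k-1})$, and it remains to show $\HK\cdot\HK_k$ is a genuine norm on $L^\infty(\ZZ_{k-1})$, i.e.\ $\HK g\HK_k=0\Rightarrow g=0$ for $\ZZ_{k-1}$-measurable $g$.

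This positive-definiteness is the hard part. I expect to obtain it by induction on $k$ through the recursion of Lemma \ref{lemmaE1}, which reduces the level-$k$ statement to the behaviour of $\HK\cdot\HK_{k-1}$ on the products $f\cdot T^n\bar f$ together with the inclusion $\ZZ_{k-2}\subset\ZZ_{k-1}$; the base case $k=1$ holds since $\HK f\HK_1=|\int f\,d\mu|=|\E(f|\ZZ_0)|$ with $\ZZ_0$ trivial. Alternatively, since $(Z_{k-1},\mu_{k-1})$ is an inverse limit of $(k-1)$-step nilsystems, one can verify the positivity directly on each nilfactor (for $k=2$ this is harmonic analysis on the Kronecker factor, where $\HK g\HK_2$ is the $\ell^4$-norm of the eigenfunction coefficients of $g$), which is precisely the content of \cite[Lemma 4.3]{HK05}.
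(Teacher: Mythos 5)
Your ``if'' direction is essentially correct. The identity $\HK f\HK_k^{2^k}=\int_X f\cdot\mathcal{D}f\,d\mu$ with $\mathcal{D}f=\E_{\mu^{[k]}}\bigl(g_*\mid\sigma(x_{\bf 0})\bigr)$, and the observation that $\mathcal{D}f$ is $\ZZ_{k-1}$-measurable because the F{\o}lner averages of $g_*$ under the amenable group $\F^{[k]}$ converge simultaneously to $\E_{\mu^{[k]}}(g_*\mid\J^{[k]})$ (a function of $x_{\bf 0}$ by \cite[Proposition 3.4]{HK05}) and to $\E_{\mu^{[k]}_*}(g_*\mid\J^{[k]}_*)$ (identified with a $\ZZ_{k-1}$-measurable function of $x_{\bf 0}$ by the very definition of $\ZZ_{k-1}$), is a valid repackaging of the standard argument. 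Note that the paper itself offers no proof of this proposition --- it is imported from Host--Kra --- so the comparison here is with the standard proof rather than with a proof in the text.

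The genuine gap is in your ``only if'' direction. You reduce it to the assertion that $\HK\cdot\HK_k$ is positive definite on $L^\infty(\ZZ_{k-1})$, and then you do not prove that assertion: the proposed induction through Lemma \ref{lemmaE1} would require controlling the structure of $\ZZ_{k-1}$ over $\ZZ_{k-2}$ (i.e.\ the deep part of the Host--Kra structure theory), and your fallback citation of \cite[Lemma 4.3]{HK05} is a citation of the statement being proved. This is a detour into a strictly harder problem than necessary. The converse follows directly from the Gowers--Cauchy--Schwarz inequality recorded in this appendix, $\bigl|\int\bigotimes_{\ep}f_\ep\,d\mu^{[k]}\bigr|\le\prod_\ep\HK f_\ep\HK_k$, combined with the defining identification of $\ZZ_{k-1}$ with $\J^{[k]}_*$. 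Indeed, if $\HK f\HK_k=0$, take $f_{\bf 0}=f$ and arbitrary bounded $f_\ep$ for $\ep\neq{\bf 0}$: the inequality forces $f(x_{\bf 0})$ to be orthogonal in $L^2(\mu^{[k]})$ to every finite sum of products $\bigotimes_{\ep\neq{\bf 0}}f_\ep(x_\ep)$, hence to every bounded function of ${\bf x}_*$. Since any bounded $\ZZ_{k-1}$-measurable $h$ satisfies $h(x_{\bf 0})=\tilde h({\bf x}_*)$ $\mu^{[k]}$-a.e.\ for some $\J^{[k]}_*$-measurable $\tilde h$, this yields $\int f\bar h\,d\mu=0$ for all such $h$, i.e.\ $\E(f\mid\ZZ_{k-1})=0$. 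No positive-definiteness of the seminorm on $\ZZ_{k-1}$-measurable functions is needed anywhere.
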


\section{The van der Corput lemma}

\begin{lem}\label{vanderCoput}
Let $\{x_n\}$ be a bounded sequence in a Hilbert space $\mathcal{H}$
with norm $\parallel \cdot \parallel$ and inner product $<\cdot,
\cdot>$. Then
\begin{equation*}
    \limsup_{N\to \infty} \big\| \frac{1}{N}\sum _{n=1}^N
    x_n\big\|^2
    \le \limsup_{H\to \infty}\frac{1}{H} \sum_{h=1}^H\limsup_{N\to\infty}
    \left|\frac{1}{N}\sum_{n=1}^N <x_n, x_{n+h}>  \right|.
\end{equation*}

\end{lem}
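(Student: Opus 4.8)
The plan is to prove this classical van der Corput estimate by the standard smoothing-plus-Cauchy--Schwarz argument, taking care with the constant so that the Fej\'er-type weights reduce \emph{exactly} to ordinary Ces\`aro averages. Write $C=\sup_n\|x_n\|$, and for a fixed window length $H\in\N$ set
\[
y_n=\frac1H\sum_{h=0}^{H-1}x_{n+h},\qquad a_N=\frac1N\sum_{n=1}^N x_n,\qquad b_N=\frac1N\sum_{n=1}^N y_n .
\]
First I would show that smoothing does not change the limit. Comparing the coefficient of each $x_m$ in $a_N$ and in $b_N$, the two averages agree on the interior block $H\le m\le N$ and differ only on at most $2(H-1)$ boundary indices, each with coefficient at most $1/N$. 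Hence $\|a_N-b_N\|\le 2(H-1)C/N\to0$ as $N\to\infty$ for fixed $H$, so that $\limsup_N\|a_N\|^2=\limsup_N\|b_N\|^2$.

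Next, by convexity of $t\mapsto\|t\|^2$ (equivalently Cauchy--Schwarz in the $n$-average),
\[
\|b_N\|^2\le\frac1N\sum_{n=1}^N\|y_n\|^2
=\frac1{H^2}\sum_{h,h'=0}^{H-1}\frac1N\sum_{n=1}^N\langle x_{n+h},x_{n+h'}\rangle .
\]
For each fixed pair $(h,h')$ a shift of the summation index $n\mapsto n+h$ alters $\frac1N\sum_n\langle x_{n+h},x_{n+h'}\rangle$ only by boundary terms of size $O(H/N)$, so the $\limsup_N$ of its modulus equals $A_{h'-h}$, where $A_k:=\limsup_N|\frac1N\sum_{n=1}^N\langle x_n,x_{n+k}\rangle|$; note $A_{-k}=A_k$ by conjugate-symmetry of the inner product and $A_0\le C^2$. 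Grouping the $H^2$ pairs by $k=h'-h$, of which there are $H-|k|$, gives
\[
\limsup_N\|a_N\|^2\le\frac1{H^2}\sum_{k=-(H-1)}^{H-1}(H-|k|)A_k
=\frac{A_0}{H}+\frac{2}{H^2}\sum_{k=1}^{H-1}(H-k)A_k .
\]

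Finally I would let $H\to\infty$. The diagonal contribution $A_0/H\to0$. For the remaining Fej\'er-weighted sum I would apply Abel summation: writing $\sigma_m=\frac1m\sum_{k=1}^m A_k$ one has $\sum_{k=1}^{H-1}(H-k)A_k=\sum_{m=1}^{H-1}m\,\sigma_m$, whence $\frac{2}{H^2}\sum_{k=1}^{H-1}(H-k)A_k=\frac{2}{H^2}\sum_{m=1}^{H-1}m\,\sigma_m$ is a weighted average of the $\sigma_m$ with weights $2m/H^2$ summing to $(H-1)/H$ and concentrating on large $m$; hence its $\limsup_H$ is at most $\limsup_m\sigma_m=\limsup_H\frac1H\sum_{h=1}^H A_h$, which is exactly the asserted right-hand side. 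The one point needing genuine care is this last reduction: a crude bound $(H-k)\le H$ would cost a spurious factor $2$, and it is precisely the Abel-summation identity that shows the triangular weights are harmless and produce the stated constant. The other delicate but routine point is verifying in the second step that the index shifts affect each average only through $O(H/N)$ boundary terms.
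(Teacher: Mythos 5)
The paper states this lemma without proof (it is the classical van der Corput estimate, relegated to an appendix), so there is nothing to compare against; your argument is correct and complete. It is the standard smoothing-plus-Cauchy--Schwarz proof, and you correctly handle the one non-routine point, namely the Abel-summation (Fej\'er-to-Ces\`aro) step that recovers the constant $1$ rather than $2$ in front of the averaged correlations $\limsup_N\bigl|\frac1N\sum_{n=1}^N\langle x_n,x_{n+h}\rangle\bigr|$.
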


\section{Invariant algebra of $T\times T^2\times \ldots \times T^d$}

\begin{lem}\label{AP-lem-vdc}
Let $(X,\X,\mu,T)$ be an ergodic system, $d\ge 1$ be an integer
and let $\lambda$ be any $d$-fold self-joining of $X$.
Assume that $f_1,\ldots,f_d\in L^\infty(X,\mu)$ with
$\|f_j\|_\infty\le 1$ for $j=1,\ldots,d$. Then
\begin{equation}\label{AP-VDC}
\lim_{N\to\infty}\Big\|
\frac{1}{N}\sum_{n=0}^{N-1}f_1(T^nx_1)f_2(T^{2n}x_2)\ldots
f_d(T^{dn}x_d) \Big\|_{L^2(X^d, \lambda)}\le \min_{1\le l\le d}\{l\cdot
\interleave f_l\interleave_d \}
\end{equation}
\end{lem}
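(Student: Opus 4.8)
The plan is to argue by induction on $d$, following the proof already given for the Furstenberg selfjoining $\mu^{(d)}$, after isolating \emph{which} properties of $\mu^{(d)}$ that argument actually used. The point is that the only facts invoked are: (i) invariance of the joining under the diagonal transformation $\tau_d=T\times\cdots\times T$, and (ii) that projecting a $k$-fold selfjoining onto a subset of coordinates yields a selfjoining of the smaller family. Neither uses $\sigma_d$-invariance, and both hold for an \emph{arbitrary} selfjoining $\ll$; this is exactly why the statement can be asserted for all $\ll$. For $d=1$ a $1$-fold selfjoining is just $(X,\mu)$ itself, and the claim is the mean ergodic theorem: $\|\frac1N\sum_{n=0}^{N-1}T^nf_1\|_{L^2(\mu)}\to|\int f_1\,d\mu|=\HK f_1\HK_1$.

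For the inductive step I assume the estimate for $d$ and for \emph{every} $d$-fold selfjoining, and prove it for $d+1$. Fix $l$; I treat $2\le l\le d+1$ first. With $\xi_n=\bigotimes_{j=1}^{d+1}T^{jn}f_j$, a bounded sequence in $L^2(X^{d+1},\ll)$, the van der Corput lemma (Lemma~\ref{vanderCoput}) reduces matters to the correlations
\[
\int\xi_n\overline{\xi_{n+h}}\,d\ll=\int\sigma_{d+1}^n\Big(\bigotimes_{j=1}^{d+1}f_j\cdot\overline{T^{jh}f_j}\Big)\,d\ll .
\]
This is the single place where the measure enters. Writing $\sigma_{d+1}^n=\tau_{d+1}^n\circ(\id\times\sigma_d^n)$ and using only the $\tau_{d+1}$-invariance of $\ll$ to discard the factor $\tau_{d+1}^n$, the first coordinate is frozen and I obtain
\[
\frac1N\sum_{n=0}^{N-1}\int\xi_n\overline{\xi_{n+h}}\,d\ll
=\int\big(f_1\overline{T^hf_1}\big)\otimes\Big[\frac1N\sum_{n=0}^{N-1}\sigma_d^n\bigotimes_{j=2}^{d+1}f_j\overline{T^{jh}f_j}\Big]\,d\ll .
\]
By Cauchy--Schwarz this is bounded by $\|f_1\overline{T^hf_1}\|_{L^2(\mu)}\le 1$ (the first marginal is $\mu$) times the $L^2$-norm of the bracket; since the bracket depends only on the last $d$ coordinates, that norm is computed against the projection $\ll'$ of $\ll$ onto those coordinates, which is a $d$-fold selfjoining. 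The induction hypothesis for $\ll'$, choosing the factor of index $l$, bounds it by $(l-1)\HK f_l\overline{T^{lh}f_l}\HK_d$.

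Writing $M$ for the double $\limsup$ produced by van der Corput, I then collapse the progression $lh\mapsto h$ (legitimate since $\HK\cdot\HK_d\ge0$ and the seminorm is $T$-invariant, at the cost of a factor $l$), apply Jensen's inequality to pass from the Cesàro average of $\HK f_l\cdot T^hf_l\HK_d$ to that of its $2^d$-th power, and recognise the resulting limit as $\HK f_l\HK_{d+1}^{2^{d+1}}$ via Lemma~\ref{lemmaE1}. This yields $\limsup_N\|\frac1N\sum_n\xi_n\|_{L^2(\ll)}^2\le M\le l^2\HK f_l\HK_{d+1}^2$, i.e.\ the bound $l\HK f_l\HK_{d+1}$, for each $2\le l\le d+1$.

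The obstacle worth flagging is twofold. First, one must verify (as above) that $\sigma$-invariance is genuinely dispensable and that diagonal invariance together with the hereditary property of selfjoinings under projection suffices; once this is seen, the whole $\mu^{(d)}$-computation transfers verbatim with $\mu^{(d+1)}\rightsquigarrow\ll$ and $\mu^{(d)}\rightsquigarrow\ll'$. Second, since the freezing step via $\tau_{d+1}^n$ always immobilises the \emph{first} coordinate, $f_1$ is the one discarded by Cauchy--Schwarz, so only $l\ge2$ emerge from the argument above. To recover the endpoint $l=1$ I instead freeze the last coordinate, taking $c=-(d+1)$ in $\tau_{d+1}^{cn}\circ\sigma_{d+1}^n$; the reduced $d$-fold average then runs with $T^{-1}$, under which the seminorms $\HK\cdot\HK_d$ are unchanged, and the induction hypothesis applied to the time-reversed projected selfjoining supplies the bound $\HK f_1\HK_{d+1}$. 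This $l=1$ bookkeeping, and the insensitivity of $\HK\cdot\HK_d$ to time reversal, are the only delicate points; taking the minimum over $1\le l\le d+1$ closes the induction.
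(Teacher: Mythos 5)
Your argument is essentially the paper's own proof of this lemma: the same induction on $d$ ranging over all $d$-fold self-joinings, the same base case via the ergodic theorem, and the same van der Corput / Cauchy--Schwarz / project-to-$\lambda'$ step; your observation that only $\tau$-invariance and the stability of self-joinings under coordinate projections are used is precisely the point of this appendix version of the lemma. You are in fact more explicit than the paper in two places: you spell out the diagonal shift $\sigma_{d+1}^n=\tau_{d+1}^n\circ(\id\times\sigma_d^n)$ that makes the reduced average have exponents $n,2n,\dots,dn$ (the paper silently writes $(\sigma_d)^n$), and you record the inductive constant as $l-1$ rather than $l$.

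The one point to be careful about is the endpoint $l=1$, which the paper dismisses with ``the case $l=1$ is similar.'' Your time-reversal device (freeze the last coordinate, pass to $S=T^{-1}$, use that $\HK\cdot\HK_d$ is insensitive to replacing $T$ by $T^{-1}$) is the right move, but after reordering the reduced average into the lemma's normal form, $f_1$ occupies index $d$ there, so the induction hypothesis returns $d\,\HK f_1\cdot T^hf_1\HK_d$ and the final bound is $\sqrt{d}\,\HK f_1\HK_{d+1}$, not the $1\cdot\HK f_1\HK_{d+1}$ that the stated minimum literally asserts. This discrepancy is inherited from (and hidden in) the paper's own treatment, and it is harmless for every use of the lemma in the text, which only requires that the limit vanish whenever some $\HK f_l\HK_d$ vanishes; but you should not claim the constant $1$ emerges from the argument as written.
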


\begin{proof}
We proceed by induction. For $d=1$, the only self-joining $\lambda$ is $\mu$. So
by the Ergodic Theorem,
$$\Big \|\frac{1}{N}\sum_{n=0}^{N-1}T^n f_1 \Big\|_{L^2(\mu)}\to \Big|\int f_1d\mu\Big|=\HK f_1 \HK_1.$$
Let $d\ge 1$ and assume that (\ref{AP-VDC}) holds for $d$ and any $d$-fold self-joining of $X$. Let
$f_1,\ldots,f_{d+1}\in L^\infty(\mu)$ with $\|f_j\|_\infty\le 1$ for
$j=1,\ldots,d+1$. Let $\lambda$ be any $d+1$-fold self-joining of $X$. Choose $l\in \{2,3,\ldots,d+1\}$. (The case $l=1$
is similar). Write
$$\xi_n=\bigotimes_{j=1}^{d+1}T^jf_j=f_1(T^nx_1)f_2(T^{2n}x_2)\ldots
f_{d+1}(T^{(d+1)n}x_{d+1}).$$

By the van der Corput lemma (Lemma \ref{vanderCoput}),
\begin{equation*}
    \limsup_{N\to \infty} \big\| \frac{1}{N}\sum _{n=0}^{N-1}
    \xi_n\big\|^2_{L^2(\ll)}
    \le \limsup_{H\to \infty}\frac{1}{H} \sum_{h=0}^{H-1}\limsup_{N\to\infty}
    \left|\frac{1}{N}\sum_{n=0}^{N-1} \int \xi_{n+h}\cdot\xi_n d\ll \right|.
\end{equation*}
Letting $M$ denote the last $\limsup$, we need to show that $M\le
l^2\HK f_l\HK^2_{d+1}$. For any $h\ge 1$,
\begin{equation*}
\begin{split}
    & \ \ \ \left|\frac{1}{N}\sum_{n=0}^{N-1} \int \xi_{n+h}\cdot\xi_n d\ll
    \right| \\ &=\left | \int(f_1\cdot T^hf_1)\otimes \frac{1}{N}\sum_{n=0}^{N-1}
    (\sigma_d)^n \bigotimes_{j=2}^{d+1}f_j\cdot T^{jh}f_j d
    \ll (x_1,\ldots,x_{d+1})\right|\\
    &\le \Big\|f_1\cdot T^hf_1 \Big\|_{L^2(\ll)}\cdot
    \Big\|\frac{1}{N}\sum_{n=0}^{N-1}
    (\sigma_d)^n \bigotimes_{j=2}^{d+1}f_j\cdot T^{jh}f_j
    \Big\|_{L^2(\ll)}\\
    &= \Big\|f_1\cdot T^hf_1 \Big\|_{L^2(\mu)}\cdot
    \Big\|\frac{1}{N}\sum_{n=0}^{N-1}
    (\sigma_d)^n \bigotimes_{j=2}^{d+1}f_j\cdot T^{jh}f_j
    \Big\|_{L^2(\ll')},
\end{split}
\end{equation*}
where $\ll'$ is the image of $\ll$ to the last $d$ coordinates.
It is clear $\ll'$ is a $d$-fold self-joining of $X$, and by the inductive assumption,
\begin{equation*}
   \left|\frac{1}{N}\sum_{n=0}^{N-1} \int \xi_{n+h}\cdot\xi_n d\ll
    \right|\le l\HK f_l\cdot T^{lh}\HK_d.
\end{equation*}
We get
\begin{equation*}
\begin{split}
    M& \le l\cdot \limsup_{H\to\infty}
    \frac{1}{H}\sum_{h=0}^{H-1}\HK f_l\cdot T^{lh}f_l\HK_d
    \le l^2\cdot\limsup_{H\to\infty}\frac{1}{H}\sum_{h=0}^{H-1} \HK f_l\cdot T^h
    f_l\HK_d\\
    &\le l^2\cdot \limsup_{H\to\infty} \Big( \frac{1}{H}\sum_{h=0}^{H-1} \HK f_l\cdot T^h
    f_l\HK_d^{2^d} \Big)^{1/2^d}\\
    &=l^2\cdot \HK f_l\HK_{d+1}^2.
\end{split}
\end{equation*}
The last equation follows from Lemma \ref{lemmaE1}. The proof is completed.
\end{proof}

\begin{lem}\label{AP-lem-vdc2}
Let $(X,\X,\mu,T)$ be an ergodic system and $d\in \N$. Suppose that $\ll$ is a $d$-fold self-joining of $X$ and it is $\sigma_d$-invariant. Assume that
$f_1,\ldots,f_d\in L^\infty(X,\mu)$. Then
\begin{equation}\label{}
    \E\Big(\bigotimes_{j=1}^d f_j\Big|\I(X^d, \ll,\sigma_d)\Big) =\E\Big( \bigotimes_{j=1}^d
    \E(f_j|\ZZ_{d-1})\Big |\I(X^d, \ll, \sigma_d) \Big).
\end{equation}

\end{lem}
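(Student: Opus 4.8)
The plan is to mirror the reduction used for the $\mu^{(d)}$-version of this statement, simply replacing the Furstenberg self-joining by the arbitrary $\sigma_d$-invariant $d$-fold self-joining $\ll$, and invoking the self-joining form of Lemma \ref{AP-lem-vdc} (the one proved in this appendix) in place of its $\mu^{(d)}$-version. Write $\I=\I(X^d,\ll,\sigma_d)$. First I would reduce the asserted identity to a vanishing statement. Applying Lemma \ref{lem-product} with $a_j=f_j$ and $b_j=\E(f_j|\ZZ_{d-1})$, the difference $\bigotimes_{j=1}^d f_j-\bigotimes_{j=1}^d\E(f_j|\ZZ_{d-1})$ telescopes into a sum of $d$ tensor products, the $k$-th summand carrying the factor $f_k-\E(f_k|\ZZ_{d-1})$ in the $k$-th coordinate and bounded functions in the others. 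By linearity of conditional expectation it therefore suffices to show
\[
\E\Big(\bigotimes_{j=1}^d h_j\,\Big|\,\I\Big)=0
\]
whenever $h_1,\dots,h_d\in L^\infty(\mu)$ and $\E(h_k|\ZZ_{d-1})=0$ for some index $k$.

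For such a $k$, the characterization of the HK-seminorms in the appendix (namely $\HK h\HK_m=0$ iff $\E(h|\ZZ_{m-1})=0$) gives $\HK h_k\HK_d=0$. To convert the conditional expectation into an ergodic average, I would use that $\ll\in M_{\sigma_d}(X^d)$, so the mean ergodic theorem for the Koopman operator of $\sigma_d$ on $L^2(\ll)$ yields
\[
\E\Big(\bigotimes_{j=1}^d h_j\,\Big|\,\I\Big)
=\lim_{N\to\infty}\frac1N\sum_{n=0}^{N-1}\Big(\bigotimes_{j=1}^d h_j\Big)\circ\sigma_d^{\,n}
\qquad\text{in }L^2(\ll),
\]
and the summand is precisely $h_1(T^nx_1)h_2(T^{2n}x_2)\cdots h_d(T^{dn}x_d)$. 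Taking $L^2(\ll)$-norms, the limit of the norms equals the norm of the limit, so it equals $\big\|\E(\bigotimes_j h_j\mid\I)\big\|_{L^2(\ll)}$.

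Now I would apply the self-joining form of Lemma \ref{AP-lem-vdc} directly to $\ll$: after dividing by $\prod_{j=1}^d\|h_j\|_\infty$ to normalize the sup-norms, the lemma bounds this limiting $L^2(\ll)$-norm by a fixed constant times $\HK h_k\HK_d=0$. Hence $\E(\bigotimes_j h_j\mid\I)=0$, which by the telescoping reduction establishes the displayed identity of the lemma.

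The hard part is already isolated upstream: it is the self-joining generalization of Lemma \ref{AP-lem-vdc}, whose van der Corput induction must be carried out with $\ll$ and with its marginal $\ll'$ on the last $d-1$ coordinates (again a self-joining) rather than with $\mu^{(d)}$; granting that, the present lemma is a formal consequence. The only minor points to verify are that the mean ergodic theorem applies verbatim, which it does since $\ll$ is $\sigma_d$-invariant, and that the normalization step is harmless, since $\HK h_k\HK_d=0$ forces the resulting bound to vanish irrespective of the multiplicative constant.
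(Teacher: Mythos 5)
Your proposal is correct and follows essentially the same route as the paper: the telescoping reduction via Lemma \ref{lem-product} to the vanishing statement, the identification $\E(h_k|\ZZ_{d-1})=0 \Leftrightarrow \HK h_k\HK_d=0$, the mean ergodic theorem for $\sigma_d$ on $L^2(\ll)$, and the self-joining form of Lemma \ref{AP-lem-vdc} as the key estimate. The only difference is that you make explicit the normalization by $\prod_j\|h_j\|_\infty$ and state the $L^2(\ll)$-norm identity more carefully than the paper does, which is a welcome (if minor) tightening.
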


\begin{proof}
By Lemma \ref{lem-product}, it suffices to show that
\begin{equation}\label{}
\E\Big(\bigotimes_{j=1}^d f_j\Big|\I(X^d, \ll, \sigma_d)\Big)
=0
\end{equation}
whenever $\E(f_k|\ZZ_{d-1})=0$ for some $k\in \{1,2,\ldots,d\}$.
This condition implies that $\HK f_k\HK_d=0$. By the Ergodic Theorem
and Lemma \ref{AP-lem-vdc}, we have
\begin{equation*}
\begin{split}
&\ \ \ \ \Big|\E\Big(\bigotimes_{j=1}^d f_j\Big|\I(X^d,
\ll,\sigma_d)\Big)\Big| \\
&=\lim_{N\to\infty}\Big\|
\frac{1}{N}\sum_{n=0}^{N-1}f_1(T^nx_1)f_2(T^{2n}x_2)\ldots
f_d(T^{dn}x_d) \Big\|_{L^2(X^d, \ll)}\le k \cdot\interleave
f_k\interleave_d =0.
\end{split}
\end{equation*}
So the lemma follows.
\end{proof}

\begin{prop}
Let $(X,\X,\nu,T)$ be an ergodic system and $d\in \N$.
Suppose that $\ll$ is a $d$-fold self-joining of $X$ and
it is $\sigma_d$-invariant. Then the
$\sigma$-algebra $\I(X^d, \ll,\sigma_d)$ is measurable with
respect to $\ZZ_{d-1}^{(d)}$.
\end{prop}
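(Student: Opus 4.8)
The plan is to reproduce, verbatim in structure, the proof of the earlier proposition for $\mu^{(d)}$, replacing $\mu^{(d)}$ everywhere by the given $\sigma_d$-invariant $d$-fold self-joining $\ll$ and invoking the self-joining versions of the two preceding lemmas. Write $\I=\I(X^d,\ll,\sigma_d)$ and let $P=\E(\,\cdot\mid \I)$ be the orthogonal projection of $L^2(X^d,\ll)$ onto the closed subspace $L^2(\I)$. First I would reduce to tensor functions: the linear span of the products $\bigotimes_{j=1}^d f_j$ with $f_1,\ldots,f_d$ bounded on $X$ is dense in $L^2(X^d,\ll)$, since such products generate the product $\sigma$-algebra. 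Because $P$ is continuous and maps onto $L^2(\I)$, the image $P\big(\mathrm{span}\{\bigotimes f_j\}\big)$ is dense in $L^2(\I)$; hence every bounded $\I$-measurable function is an $L^2(\ll)$-limit of finite sums of functions of the form $\E(\bigotimes_{j=1}^d f_j\mid \I)$.

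Next I would use the self-joining version of Lemma \ref{AP-lem-vdc2}, which gives $\E(\bigotimes_{j=1}^d f_j\mid \I)=\E(\bigotimes_{j=1}^d \E(f_j|\ZZ_{d-1})\mid \I)$. Thus in each approximating summand one may assume, without loss of generality, that every factor $f_j$ is $\ZZ_{d-1}$-measurable, so that the integrand $\bigotimes_{j=1}^d f_j$ is $\ZZ_{d-1}^{(d)}$-measurable. It therefore remains only to check that $P$ sends $\ZZ_{d-1}^{(d)}$-measurable functions to $\ZZ_{d-1}^{(d)}$-measurable functions.

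The key point is that $\ZZ_{d-1}^{(d)}$ is $\sigma_d$-invariant: since $Z_{d-1}$ is a $T$-factor of $X$ it is also a $T^j$-factor for every $j$, so $\sigma_d=T\times T^2\times\ldots\times T^d$ preserves the product $\sigma$-algebra $\ZZ_{d-1}^{(d)}$. Consequently, for any $\ZZ_{d-1}^{(d)}$-measurable $g$ each translate $\sigma_d^n g$ is again $\ZZ_{d-1}^{(d)}$-measurable, and by the ($L^2$/pointwise) ergodic theorem $\frac1N\sum_{n=0}^{N-1}\sigma_d^n g\to \E(g\mid \I)=Pg$ in $L^2(\ll)$; hence $Pg$ is $\ZZ_{d-1}^{(d)}$-measurable as an $L^2$-limit of $\ZZ_{d-1}^{(d)}$-measurable functions. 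Applying this with $g=\bigotimes_{j=1}^d f_j$ shows that every approximating function, and therefore (by the density established above) every $\I$-measurable function, is $\ZZ_{d-1}^{(d)}$-measurable, which is exactly the assertion that $\I(X^d,\ll,\sigma_d)$ is measurable with respect to $\ZZ_{d-1}^{(d)}$.

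The main obstacle I anticipate is this last step, namely verifying that the invariant projection $P$ preserves $\ZZ_{d-1}^{(d)}$-measurability. It is clean once one records the $\sigma_d$-invariance of $\ZZ_{d-1}^{(d)}$ and passes to the ergodic-average limit inside the closed subalgebra; the only subtlety, compared with the $\mu^{(d)}$ case, is that $\ll$ need not be ergodic under $\sigma_d$, so one must use the general form of $\E(\,\cdot\mid\I)$ as the $L^2$-limit of the averages $\frac1N\sum_{n=0}^{N-1}\sigma_d^n$ rather than a constant. Everything else is an exact transcription of the $\mu^{(d)}$ argument, with Lemma \ref{AP-lem-vdc2} supplying the reduction to $\ZZ_{d-1}$-measurable factors.
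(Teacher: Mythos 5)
Your proposal is correct and follows essentially the same route as the paper's own proof: approximate $\I(X^d,\ll,\sigma_d)$-measurable functions by sums of $\E(\bigotimes f_j\mid\I)$, use the self-joining version of Lemma~\ref{AP-lem-vdc2} to reduce to $\ZZ_{d-1}$-measurable factors, and conclude from the $\sigma_d$-invariance of $\ZZ_{d-1}^{(d)}$. The only difference is that you make explicit (via the mean ergodic theorem applied to the averages $\frac1N\sum_{n<N}\sigma_d^n$) the step the paper states without justification, namely that the invariant conditional expectation preserves $\ZZ_{d-1}^{(d)}$-measurability; this is a correct and welcome elaboration, not a deviation.
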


\begin{proof}
Every bounded function on $X^{d}$ which is measurable with respect
to $\I(X^d, \ll,\sigma_d)$ can be approximated in
$L^2(X^d, \ll)$ by finite sums of functions of the form \linebreak
$\E(\otimes_{j=1}^d f_j | \I( X^d, \ll, \sigma_d))$ where
$f_1,\ldots,f_d$ are bounded functions on $X$. By Lemma
\ref{AP-lem-vdc2}, one can assume that these functions are
measurable with respect to $Z_{d-1}$. In this case
$\otimes_{j=1}^df_j$ is measurable with respect to
$\ZZ_{d-1}^{(d)}$. Since this $\sigma$-algebra $\ZZ_{d-1}^{(d)}$ is
invariant under $\sigma_d$, $\E(\otimes_{j=1}^df_j|\I(X^d,
\ll,\sigma_d))$ is also measurable with respect to
$\ZZ_{d-1}^{(d)}$. Therefore $\I(X^d, \ll,\sigma_d)$ is
measurable with respect to $\ZZ_{d-1}^{(d)}$.
\end{proof}

\begin{cor}\label{AP-cor-ergodic}
Let $(X, \X, \mu,T)$ be an ergodic system and $d\in \N$. Suppose that $\ll$
is a $d$-fold self-joining of $X$ and it is $\sigma_d$-invariant. Then the
factor map $\pi_{d-1}^d: (X^{d}, \ll, \sigma_d)\rightarrow
(Z_{d-1}^d, \widetilde{\ll},\sigma_d)$ is ergodic, where $\widetilde{\ll}$ is the image of $\ll$.

In particular, one has that $\I(X^{d}, \ll, \sigma_d)$ is isomorphic to $\I(Z_{d-1}^d, \widetilde{\ll},\sigma_d)$.
\end{cor}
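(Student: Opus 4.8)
The plan is to read both assertions off the Proposition immediately preceding this Corollary, which already carries all of the analytic weight. Recall the definition of ergodic extension given earlier in the paper: a factor map is \emph{ergodic} precisely when every invariant function upstairs is measurable with respect to the downstairs $\sigma$-algebra. Applied to $\pi_{d-1}^d$, this means exactly that every $\sigma_d$-invariant function on $(X^d,\ll)$ is measurable with respect to the pulled-back $\sigma$-algebra $\ZZ_{d-1}^{(d)}=(\pi_{d-1}^d)^{-1}(\mathcal{B}(Z_{d-1}^d))$, i.e.\ that $\I(X^d,\ll,\sigma_d)\subseteq \ZZ_{d-1}^{(d)}$ modulo $\ll$-null sets. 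This is word for word the conclusion of the preceding Proposition, so the first assertion needs nothing beyond unwinding the definition.

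For the \emph{in particular} clause I would spell out, in our setting, the standard fact that an ergodic extension induces an isomorphism of the associated invariant $\sigma$-algebras. First I would define the pullback $\Phi\colon \I(Z_{d-1}^d,\widetilde{\ll},\sigma_d)\lra \I(X^d,\ll,\sigma_d)$ by $B\mapsto (\pi_{d-1}^d)^{-1}(B)$. Because $\pi_{d-1}^d=\pi_{d-1}\times\cdots\times\pi_{d-1}$ intertwines $T^j$ on the $j$-th coordinate and hence intertwines $\sigma_d$ on $X^d$ with $\sigma_d$ on $Z_{d-1}^d$, the preimage of an invariant set is invariant, so $\Phi$ is well defined; since $\widetilde{\ll}=(\pi_{d-1}^d)_*\ll$ we have $\ll(\Phi(B))=\widetilde{\ll}(B)$, so $\Phi$ is measure-preserving and injective on the measure algebra. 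For surjectivity I would use the ergodicity just established: given $A\in \I(X^d,\ll,\sigma_d)$, that result gives $A\in \ZZ_{d-1}^{(d)}$ modulo $\ll$-null sets, so $A=(\pi_{d-1}^d)^{-1}(B)$ for some Borel $B\subseteq Z_{d-1}^d$. Using the intertwining and the pushforward relation,
$$\widetilde{\ll}\big(B\,\triangle\,\sigma_d^{-1}B\big)=\ll\big(A\,\triangle\,\sigma_d^{-1}A\big)=0,$$
so $B\in \I(Z_{d-1}^d,\widetilde{\ll},\sigma_d)$ and $\Phi(B)=A$. Hence $\Phi$ is an isomorphism of measure algebras, which is the claim.

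I expect no genuine obstacle here: the entire difficulty is concentrated in the preceding Proposition, whose proof rests on the van der Corput estimate of Lemma \ref{AP-lem-vdc} together with the vanishing criterion $\HK f_k\HK_d=0$ iff $\E(f_k|\ZZ_{d-1})=0$. The only points in the present Corollary that demand care are bookkeeping ones, namely keeping the ``modulo $\ll$-null'' identifications consistent and applying the relations between $\ll$ and $\widetilde{\ll}$ in the correct direction; no new idea is required.
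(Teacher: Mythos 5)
Your proposal is correct and matches the paper's intent exactly: the Corollary is stated without a written proof precisely because, as you observe, the first assertion is the preceding Proposition restated through the paper's definition of an ergodic extension ($\I(X^d,\ll,\sigma_d)\subset \ZZ_{d-1}^{(d)}=(\pi_{d-1}^d)^{-1}(\mathcal{B}(Z_{d-1}^d))$ mod $\ll$-null sets), and the \emph{in particular} clause is the standard measure-algebra isomorphism induced by an ergodic extension, which you verify correctly via the pullback map and the intertwining of $\sigma_d$ with $\pi_{d-1}^d$. No gaps.
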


\begin{thm}
Let $(X, \X, \mu,T)$ be an ergodic system and $d\in \N$. Suppose that $\ll$
is a $d$-fold self-joining of $X$ and it is $\langle\tau_d,\sigma_d\rangle$-ergodic. Then
$\I(X^{d}, \ll, \sigma_d)$ is isomorphic to $\ZZ_{d-1}$.
\end{thm}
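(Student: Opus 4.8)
The plan is to push the question down to the $(d-1)$-step nilfactor $Z_{d-1}$, where nilpotent rigidity is available, and there to recognize the $\sigma_d$-invariant algebra as the whole of $\ZZ_{d-1}$. Since $\lambda$ is $\langle\tau_d,\sigma_d\rangle$-ergodic it is in particular a $\sigma_d$-invariant $d$-fold self-joining, so Corollary \ref{AP-cor-ergodic} applies: the factor map $\pi_{d-1}^d:(X^d,\lambda,\sigma_d)\to (Z_{d-1}^d,\widetilde\lambda,\sigma_d)$ is ergodic, where $\widetilde\lambda=(\pi_{d-1}^d)_*\lambda$. An ergodic extension induces an isomorphism of invariant algebras, so $\I(X^d,\lambda,\sigma_d)\cong \I(Z_{d-1}^d,\widetilde\lambda,\sigma_d)$. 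It therefore suffices to show that, for the $(d-1)$-step nilsystem $Z_{d-1}$ and the $\langle\tau_d,\sigma_d\rangle$-ergodic self-joining $\widetilde\lambda$, one has $\I(Z_{d-1}^d,\widetilde\lambda,\sigma_d)\cong\ZZ_{d-1}$; observe that on $Z_{d-1}$ the algebra $\ZZ_{d-1}$ is the full Borel algebra.

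On the nilsystem I would exploit distality. The acting group $\langle\tau_d,\sigma_d\rangle$ is abelian (both generators are coordinatewise powers of $T$), and $(Z_{d-1}^d,\langle\tau_d,\sigma_d\rangle)$ is distal, so the ergodic measure $\widetilde\lambda$ is the unique invariant measure carried by a single minimal set $M=\mathrm{supp}\,\widetilde\lambda$. The target is to prove $M=N_d(Z_{d-1})$: once this is known, Theorem \ref{ziegler}(1) (unique ergodicity of $(N_d(Z_{d-1}),\langle\tau_d,\sigma_d\rangle)$) forces $\widetilde\lambda=\mu^{(d)}_{d-1}$, and the desired conclusion $\I(Z_{d-1}^d,\mu^{(d)}_{d-1},\sigma_d)\cong\ZZ_{d-1}$ is exactly the Proposition proved earlier, whose $\sigma_d$-ergodic components $\nu^{(d)}_{d-1,x}$ are indexed bijectively by $x\in Z_{d-1}$ — the injectivity resting on the disjointness $\overline{\O(x^d,\sigma_d)}\cap\overline{\O(y^d,\sigma_d)}=\emptyset$ for $x\neq y$, which is a consequence of \cite[Theorem 1.2]{HKM}. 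To locate the diagonal inside $M$ I would disintegrate $\widetilde\lambda=\int_{Z_{d-1}}\delta_x\times\widetilde\lambda_x\,d\mu_{d-1}(x)$ over the first coordinate, which is fixed by $\sigma_d'=\sigma_d\tau_d^{-1}$, and use the $\sigma_d'$-invariance of each fibre measure $\widetilde\lambda_x$ on $Z_{d-1}^{d-1}$ together with the nilpotent rigidity to force $x^d\in M$.

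The hard part is precisely this last point. Unlike the canonical case already handled, a general $\langle\tau_d,\sigma_d\rangle$-ergodic self-joining is not presented to us as a limit of diagonal averages, so one must show that $\sigma_d$-invariance — equivalently the $\sigma_d'$-invariance that spreads each fibre out — is strong enough to drag $\mathrm{supp}\,\widetilde\lambda$ onto $N_d(Z_{d-1})$ and to exclude minimal sets disjoint from the diagonal. The guiding model is the rotation case $d=2$: there $\sigma_2$-invariance collapses every off-diagonal joining $\mu_s$ (supported on $\{(z,z+s)\}$) to the single spread-out joining $\mu_1\times\mu_1=\mu^{(2)}_0$, whose support is all of $Z_1\times Z_1$ and hence contains the diagonal. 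I expect the general statement to follow the same mechanism, carried out by induction on $d$ through the fibre measures $\widetilde\lambda_x$, with Theorem \ref{ziegler} and \cite[Theorem 1.2]{HKM} supplying the nilpotent rigidity needed at each step to rule out any proper minimal set avoiding $\Delta_d(Z_{d-1})$.
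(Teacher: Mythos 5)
Your first step --- reducing to the nilfactor via Corollary \ref{AP-cor-ergodic}, so that it suffices to identify $\I(Z_{d-1}^d,\widetilde{\lambda},\sigma_d)$ for the pushed-forward joining $\widetilde{\lambda}$ --- is exactly the reduction the paper's Appendix G sets up, and it is correct. The problem is the second half. You propose to show $\mathrm{supp}\,\widetilde{\lambda}=N_d(Z_{d-1})$, hence $\widetilde{\lambda}=\mu^{(d)}_{d-1}$, and then to quote the Proposition describing $\I(Z_{d-1}^d,\mu^{(d)}_{d-1},\sigma_d)$. That intermediate claim is false, not merely unproven. Take $X=Z_{d-1}=Z_1$ an irrational rotation by $\alpha$ and $d=3$. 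The functional $x_1-2x_2+x_3$ changes by $(1-2+1)\alpha=0$ under $\tau_3$ and by $(1-4+3)\alpha=0$ under $\sigma_3$, so for each $c$ the Haar measure $\lambda_c$ of the coset $\{x_1-2x_2+x_3=c\}$ is a $\langle\tau_3,\sigma_3\rangle$-invariant $3$-fold self-joining; it is ergodic because the closure of the translation group generated by $(\alpha,\alpha,\alpha)$ and $(\alpha,2\alpha,3\alpha)$ is the entire stabilizing subtorus $\{x_1-2x_2+x_3=0\}$. For $c\neq 0$ the support of $\lambda_c$ is disjoint from $N_3(Z_1)=\{x_1-2x_2+x_3=0\}$ and contains no diagonal point, so no rigidity argument can ``drag the support onto $N_d$'' or ``exclude minimal sets avoiding the diagonal'': such minimal sets genuinely carry ergodic self-joinings. (Your $d=2$ guiding model is misleading precisely because $\langle\tau_2,\sigma_2\rangle$ contains $\id\times T$ and $T\times\id$, which forces the product joining; nothing like this survives for $d\ge 3$.)

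Note that the conclusion of the theorem still holds for $\lambda_c$: the $\sigma_3$-orbit closures inside the coset are the lines $\{(x_1+t,x_2+2t,x_3+3t)\}$, the invariant algebra is generated by $2x_1-x_2$, and $\tau_3$ acts on it as rotation by $\alpha$, so $\I(X^3,\lambda_c,\sigma_3)\cong\ZZ_1=\ZZ_2$ as the theorem asserts. This shows the statement must be proved without identifying $\widetilde{\lambda}$ with the Furstenberg self-joining: after the reduction to $Z_{d-1}$ one has to compute the $\sigma_d$-invariant algebra of an \emph{arbitrary} $\langle\tau_d,\sigma_d\rangle$-ergodic joining of the nilsystem, for instance by using the fact that such a joining is the Haar measure of a coset of a closed subgroup of $G^d$ and that $\tau_d$ acts ergodically on the space of its $\sigma_d$-ergodic components, which one then identifies with $Z_{d-1}$. (To be fair, the paper states this theorem without writing out that last step either; but the argument it does supply, in Section 5, is only ever applied to the Furstenberg joining, where the support hypothesis you need really does hold, whereas your proposed route would fail on $\lambda_c$.)
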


\section{The proof when $d$=2 in Theorem A}\label{sec-H}

\subsubsection{Graph joinings}
Let $\phi: (X,\X,\mu, T)\rightarrow
(Y,\Y, \nu, T)$ be a homomorphism of ergodic systems. Let $\id \times \phi: X\rightarrow X\times Y,
x\mapsto (x,\phi(x))$. Define
\begin{equation}\label{}
    {\rm gr}(\mu,\phi)=\int_X \d_x\times \d_{\phi(x)}\ d\mu(x)=(\id\times
    \phi)_*(\mu).
\end{equation}
It is called a {\em graph joining} of $\phi$. Equivalently, ${\rm
gr}(\mu,\phi)$ is defined by
\begin{equation}\label{}
    {\rm gr}(\mu,\phi)(A\times B)=\mu(A\cap \phi^{-1}B), \ \forall A\in \X, B\in \Y.
\end{equation}

\subsubsection{Kronecker factor $Z_1$} The Kronecker factor of the
ergodic system $(X, \mu, T)$ is an ergodic rotation and we denote it
by $(Z_1(X), t_1)$, or more simply $(Z_1, t_1)$. Let $\mu_1$ denote
the Haar measure of $Z_1$, and $\pi_{X,1}$ or $\pi_1$, denote the
factor map $X \rightarrow Z_1$.

\medskip

For $s\in Z_1$, let $\mu_{1,s}$ denote the image of the measure
$\mu_1$ under the map $z \mapsto (z, sz)$ from $Z_1$ to $Z^2_ 1$,
i.e. $\mu_{1,s}={\rm gr}(\mu_1, s)$. This measure is invariant under
$T^{[1]} = T\times T$ and is a self-joining of the rotation $(Z_1,
t_1)$. Let $\mu_s$ denote the relatively independent joining of
$\mu$ over $\mu_{1,s}$. This means that for bounded measurable
functions $f$ and $g$ on $X$,
\begin{equation}\label{}
    \int_{Z_1\times Z_1}
f(x_0)g(x_1) \ d\mu_s (x_0, x_1)=\int_{Z_1} \E(f|\ZZ_1)(z)\E(g |
\ZZ_1)(sz) \ d\mu_1(z).
\end{equation}
where we view the conditional expectations relative to $\ZZ_1$ as
functions defined on $Z_1$.

It is a classical result that the invariant $\sigma$-algebra
$\I^{[1]}$ of $(X \times X, \mu\times \mu, T \times T)$ consists in
sets of the form
\begin{equation}\label{}
   \{(x,y)\in X\times X: \pi_1(x)-\pi_1(y)\in A\}
\end{equation}
where $A \in \ZZ_1$. Hence $\I^{[1]}$ is isomorphic to $\ZZ_1$. Let
$\phi: (X\times X, \X\times \X)\rightarrow (\Omega_1, \I^{[1]}, P_1)$ be the
factor map and let $\psi : (\Omega_1, \I^{[1]}, P_1)\rightarrow (Z_1,\ZZ_1,\mu_1)$
be the isomorphic map. Hence we have
\begin{equation}\label{Omega1}
\begin{split}
    (X\times X, \X\times \X)&\stackrel{\phi}{\lra} (\Omega_1,
    \I^{[1]}, P_1)\stackrel{\psi}{\longleftrightarrow}
    (Z_1,\ZZ_1,\mu_1)\\
    (x,y)&\lra \phi(x,y) \longleftrightarrow s=\psi(\phi(x,y))
\end{split}
\end{equation}

From this, it is not difficult to deduce that the ergodic
decomposition of $\mu\times \mu$ under $T \times T$ can be written
as
\begin{equation}\label{}
    \mu\times \mu=\int_{Z_1}\mu_s\ d \mu_1(s).
\end{equation}
In particular, for $\mu_1$-almost every $s$, the measure $\mu_s$ is
ergodic for $T \times T$. For an integer $d > 0$ we have
\begin{equation}\label{}
    \mu^{[d+1]}=\int_{Z_1}(\mu_s)^{[d]} \ d\mu_1(s).
\end{equation}
Especially, we have
\begin{equation}\label{}
    \mu^{[2]}=\int_{Z_1}\mu_s\times \mu_s \ d\mu_1(s).
\end{equation}

\subsubsection{$\G^{[2]}$-actions} \label{d=2-G}


Let $\pi_1: X\rightarrow Z_1$ be the factor map from $X$ to its
Kronecker factor $Z_1$. Since $Z_1$ is a group rotation, it may be
regarded as a topological system in the natural way. By Weiss's Theorem, there is a uniquely ergodic model $(\h{X}, \h{\X},
\h{\mu}, T)$ for $(X,\X, \mu, T)$ and a factor map $\h{\pi_1}:
\h{X}\rightarrow Z_1$ which is a model for $\pi_1: X\rightarrow
Z_1$.
\[
\begin{CD}
X @>{}>> \h{X}\\
@V{\pi_1}VV      @VV{\h{\pi_1}}V\\
Z_1 @>{ }>> Z_1
\end{CD}
\]

Hence for simplicity, we may assume that $(\h{X}, \h{\X}, \h{\mu},
T)= (X,\X, \mu, T)$ and $\pi_1=\h{\pi_1}$. Now we show that
$(\Q^{[2]}, \mu^{[2]}, \G^{[2]})$ is uniquely ergodic.

Let $\ll$ be a $\G^{[2]}$-invariant measure of $\Q^{[2]} $. Let
$$p_1: (\Q^{[2]}, \G^{[2]})\rightarrow (\Q^{[1]}, \G^{[2]}); \
{\bf x} =({\bf x'}, {\bf x''})\mapsto {\bf x'}$$
$$p_2: (\Q^{[2]}, \G^{[2]})\rightarrow (\Q^{[1]},
\G^{[2]}); \ {\bf x} =({\bf x'}, {\bf x''})\mapsto {\bf x''}$$  be
the projections. Then $(p_2)_*(\ll)$ is a $\G^{[2]}$-invariant
measure of $\Q^{[1]}=X^{[1]}$. Note that $\G^{[2]}$ acts on
$\Q^{[1]}$ as $\G^{[1]}$ actions. By subsection \ref{d=1},
$(p_2)_*(\ll)=\mu^{[1]}=\mu\times \mu$. Hence let
\begin{equation}\label{a9-0}
\ll =\int_{X^2} \ll_{{(x,y)}}\times \d_{( x,y)}\ d \mu\times
\mu(x,y)
\end{equation}
be the disintegration of $\ll$ over $\mu^{[1]}$. Since $\ll$ is
$T^{[2]}_2=\id^{[1]} \times T^{[1]}$-invariant, we have
\begin{eqnarray*}
    \ll &= &\id^{[1]} \times T^{[1]} \ll=
    \int_{X^2} \ll_{(x,y)}\times T^{[1]}\d_{(x,y)} \ d \mu\times \mu({x,y})\
    \\ & = & \int_{X^2} \ll_{(x,y)}\times \d_{T^{[1]}(x,y)} \ d \mu\times \mu(x,y)\\
    \\&=& \int_{X^2} \ll_{(T^{[1]})^{-1}(x,y)}\times \d_{(x,y)} \ d \mu\times\mu({x,y}).
\end{eqnarray*}
The uniqueness of disintegration implies that
\begin{equation}\label{a10-0}
    \ll_{(T^{[1]})^{-1}(x,y)}=\ll_{(x,y)},\quad \mu^{[1]}=\mu\times \mu \ a.e.
\end{equation}

Define $$F: (\Q^{[1]}=X^{[1]},T^{[1]}) \lra M(X^{[1]}): \
{(x,y)}\mapsto \ll_{(x,y)}.$$ By (\ref{a10-0}), $F$ is a
$T^{[1]}$-invariant $M(X^{[1]})$-value function. Hence $F$ is
$\I^{[1]}$-measurable, and hence $\ll_{(x,y)}=\ll_{\phi(x,y)}=\ll_s,
\ \mu^{[1]}\ $ a.e., where $\phi$ is defined in (\ref{Omega1}).

Thus by (\ref{a9-0}) one has that
\begin{equation*}
\begin{split}
    \ll& =\int_{X^2}  \ll_{(x,y)}\times \d_{(x,y)}\ d\mu\times \mu (x,y)=
\int_{X^2} \ll_{\phi(x,y)}\times\d_{(x,y)}\ d\mu\times \mu (x,y)\\
       &=\int_{Z_1}\int_{X^2}\ll_s\times \d_{(x,y)}\ d\mu_s(x,y)
       d\mu_1(s)\\
       &= \int_{Z_1}\ll_s\times \Big(\int_{X^2} \d_{(x,y)}\
       d\mu_s(x,y)\Big)d\mu_1(s)\\
       &=\int_{Z_1}\ll_s\times \mu_s \ d\mu_1(s)
\end{split}
\end{equation*}

Let $\pi_1^{[2]}: (\Q^{[2]}(X), \G^{[2]})\lra
(\Q^{[2]}(Z_1),\G^{[2]})$ be the natural factor map. By Theorem
\ref{HK-Zk}, $(\Q^{[2]}(Z_1),\mu_1^{[2]})$ is uniquely ergodic.
Hence
\begin{equation*}
\begin{split}
    {\pi_1}^{[2]}_*(\ll) =\mu_1^{[2]}=\int_{Z_1}\mu_{1,s}\times \mu_{1,s} \
    d\mu_1(s).
\end{split}
\end{equation*}
So $$(\pi_1\times\pi_1)_*(\ll_s)=(\pi\times\pi)_*(\mu_s)=\mu_{1,s}.$$
Note that we have that
$$(p_1)_*(\ll)=(p_2)_*(\ll)=\mu^{[1]}=\mu\times\mu,$$
and hence we have
$$\mu\times \mu=\int_{Z_1}\ll_s\ d\mu_1(s)=\int_{Z_1}\mu_s\ d\mu_1(s).$$
Hence by the uniqueness of disintegration, we have that
$\ll_s=\mu_s$, $\mu_1$ a.e.. More precisely, if $\ll_s\not=\mu_s$,
$\mu_1$ a.e., then $\mu_1(\{s\in Z_1: \ll_s\neq \mu_s\})>0$. So
there is some function $f\in C(X\times X)$ such that
\begin{equation*}
    \mu_1\Big(\{s: \ll_s(f)>\mu_s(f)\}\Big)>0.
\end{equation*}
Let $A=\{s: \ll_s(f)>\mu_s(f)\}$. By (\ref{Omega1}), we can consider
$A$ as a subset of $X\times X$:
$$A=\{s: \ll_s(f)>\mu_s(f)\}=\{(x,y)\in X\times X: \ll_{\phi(x,y)}(f)>\mu_{\phi(x,y)}(f)\}.$$
Hence by $\mu\times \mu=\int_{Z_1}\ll_s\ d\mu_1(s)$ we have
\begin{equation*}
\begin{split}
    \mu\times \mu(f\cdot 1_A)&=\int_{X^2}f\cdot 1_A \ d\mu\times
    \mu\\
    &=\int_{Z_1}\int_{X^2}f\cdot 1_A\ d\ll_s(x,y) d\mu_1(s)\\
    &=\int_{Z_1}1_A \int_{X^2}f\ d\ll_s(x,y)\ d\mu_1(s)\\
    &=\int_{A}\ll_s(f)\ d\mu_1(s)
\end{split}
\end{equation*}
Similarly, by $\mu\times \mu=\int_{Z_1}\mu_s\ d\mu_1(s)$ we have
\begin{equation*}
\begin{split}
    \mu\times \mu(f\cdot 1_A)=\int_{A}\mu_s(f)\ d\mu_1(s)
\end{split}
\end{equation*}
Thus
$$0=\int_{A}\ll_s(f)\ d\mu_1(s)-\int_{A}\mu_s(f)\ d\mu_1(s)=
\int_{A}\Big(\ll_s(f)-\mu_s(f)\Big)\ d\mu_1(s)>0,$$ a contradiction!
Hence $\ll_s=\mu_s$, $\mu_1$ a.e., and
$$\ll=\int_{Z_1}\ll_s\times \mu_s \ d\mu_1(s)=\int_{Z_1}\mu_s\times \mu_s \ d\mu_1(s)=\mu^{[2]}.$$
That is, $(\Q^{[2]}, \mu^{[2]}, \G^{[2]})$ is uniquely ergodic. The
proof is completed.

\subsubsection{$\F^{[2]}$-actions}\label{d=2-F}


We use the same model as in the proof of Proposition \ref{d=2-G}.

Let $\ll$ be a $\F^{[2]}$-invariant measure of
$\overline{\F^{[2]}}(x^{[2]}) $. Let
$$p_1: (\overline{\F^{[2]}}(x^{[2]}), \F^{[2]})\rightarrow (\overline{\F^{[1]}}(x^{[1]}), \F^{[2]}); \
{\bf x} =({\bf x'}, {\bf x''})\mapsto {\bf x'}$$
$$p_2: (\overline{\F^{[2]}}(x^{[2]}), \F^{[2]})\rightarrow (\Q^{[1]},
\F^{[2]}); \ {\bf x} =({\bf x'}, {\bf x''})\mapsto {\bf x''}$$  be
the projections. Note that $$(\overline{\F^{[1]}}(x^{[1]}),
\F^{[2]})\simeq (X,T)\ \text{and} \ (\Q^{[1]}, \F^{[2]})\simeq
(X\times X, \G^{[1]}).$$ Then $(p_2)_*(\ll)$ is a
$\G^{[1]}$-invariant measure of $\Q^{[1]}=X^{[1]}$. By subsection
\ref{d=1}, $(p_2)_*(\ll)=\mu^{[1]}=\mu\times \mu$. Hence let
\begin{equation}\label{a9}
\ll =\int_{X^2} \ll_{{(x,y)}}\times \d_{( x,y)}\ d (\mu\times \mu)
(x,y)
\end{equation}
be the disintegration of $\ll$ over $\mu^{[1]}$. Since $\ll$ is
$T^{[2]}_2=\id^{[1]} \times T^{[1]}$-invariant, we have
\begin{eqnarray*}
    \ll &= &\id^{[1]} \times T^{[1]} \ll=
    \int_{X^2} \ll_{(x,y)}\times T^{[1]}\d_{(x,y)} \ d \mu\times \mu({x,y})\
    \\ & = & \int_{X^2} \ll_{(x,y)}\times \d_{T^{[1]}(x,y)} \ d \mu\times \mu(x,y)\\
    \\&=& \int_{X^2} \ll_{(T^{[1]})^{-1}(x,y)}\times \d_{(x,y)} \ d \mu\times\mu({x,y}).
\end{eqnarray*}
The uniqueness of disintegration implies that
\begin{equation}\label{a10}
    \ll_{(T^{[1]})^{-1}(x,y)}=\ll_{(x,y)},\quad \mu^{[1]}=\mu\times \mu \ a.e.
\end{equation}

Define $$F: (\Q^{[1]}=X^{[1]},T^{[1]}) \lra M(X): \ {(x,y)}\mapsto
\ll_{(x,y)}.$$ By (\ref{a10}), $F$ is a $T^{[1]}$-invariant
$M(X)$-value function. Hence $F$ is $\I^{[1]}$-measurable, and hence
$\ll_{(x,y)}=\ll_{\phi(x,y)}=\ll_s, \ \mu^{[1]}\ $ a.e., where
$\phi$ is defined in (\ref{Omega1}).

Thus by (\ref{a9}) one has that
\begin{equation*}
\begin{split}
    \ll& =\int_{X^2}  \ll_{(x,y)}\times \d_{(x,y)}\ d\mu\times \mu (x,y)=
\int_{X^2} \ll_{\phi(x,y)} \times\d_{(x,y)}\ d\mu\times \mu (x,y)\\
       &=\int_{Z_1}\int_{X^2}\ll_s\times \d_{(x,y)}\ d\mu_s(x,y)
       d\mu_1(s)\\
       &= \int_{Z_1}\ll_s\times \Big(\int_{X^2} \d_{(x,y)}\
       d\mu_s(x,y)\Big)d\mu_1(s)\\
       &=\int_{Z_1}\ll_s\times \mu_s \ d\mu_1(s)
\end{split}
\end{equation*}

Let $\pi_1^{[2]}: (\overline{\F^{[2]}}(x^{[2]}), \F^{[2]})\lra
(\overline{\F^{[2]}}((\pi_1(x))^{[2]}),\F^{[2]})$ be the natural
factor map. By Theorem \ref{HK-Zk},
$\overline{\F^{[2]}}((\pi_1(x))^{[2]})$ is uniquely ergodic. Hence
\begin{equation*}
\begin{split}
    {\pi_1}^{[2]}_*(\ll) =\int_{Z_1}\mu_{1}\times \mu_{1,s} \
    d\mu_1(s)=\mu_1^3.
\end{split}
\end{equation*}
And
$${\pi_1}_*(\ll_s)=\mu_1, \ \text{and}\ (\pi_1\times\pi_1)_*(\mu_s)=\mu_{1,s}.$$
Note that we have that
$$(p_1)_*(\ll)=\mu, \ \text{and}\ (p_2)_*(\ll)=\mu^{[1]}=\mu\times\mu,$$
and hence we have
$$\mu=\int_{Z_1}\ll_s\ d\mu_1(s).$$
Let $\mu=\int _{Z_1}\nu_s\ d\mu_1(s)$ be the disintegration of $\mu$
over $\mu_1$. Hence by the uniqueness of disintegration, we have
that $\ll_s=\nu_s$, $\mu_1$ a.e.. Thus
$$\ll=\int_{Z_1}\ll_s\times \mu_s \ d\mu_1(s)=\int_{Z_1}\nu_s\times \mu_s \ d\mu_1(s).$$
That is, $(\overline{\F^{[2]}}(x^{[2]}), \F^{[2]})$ is uniquely
ergodic. The proof is completed.

\medskip



\begin{thebibliography}{SSS}

\bibitem{Assani} I. Assani, \textit{Pointwise convergence of ergodic averages along cubes},
J. Analyse Math. {\bf 110} (2010), 241-269.

\bibitem{Assani13} I. Assani, A.E. Multiple recurrence for weakly mioxing commuting actions, 
arXiv:1312.5270.



\bibitem{Austin} T. Austin, \textit{On the norm convergence of non-conventional ergodic
averages}, Ergod. Th. and Dynam. Sys., {\bf 30}(2010), 321--338.

\bibitem{BeF} A. Bellow and H. Furstenberg, \textit{An application of number theory to
ergodic theory and the construction of uniquely ergodic models}. A
collection of invited papers on ergodic theory. Israel J. Math. {\bf
33} (1979), 231-240 (1980).

\bibitem{Bergelson00} V. Bergelson. \textit{The multifarious Poincare
recurrence theorem}, Descriptive set theory and dynamical systems.
London Math. Soc. Lecture Note Series 277, Cambridge Univ. Press,
Cambridge, (2000), 31¨C57.

\bibitem{Bergelson06} V. Bergelson, \textit{Combinatorial and Diophantine applications of
ergodic theory}, Appendix A by A. Leibman and Appendix B by Anthony
Quas and M\'at\'e Wierdl. Handbook of dynamical systems. Vol. 1B,
745--869, Elsevier B. V., Amsterdam, 2006.

\bibitem{BHK05} V. Bergelson, B. Host and  B. Kra, \textit{Multiple recurrence
and nilsequences. With an appendix by Imre Ruzsa}, Invent. Math. 160
(2005), no. 2, 261--303.




\bibitem{B90} J. Bourgain, \textit{Double recurrence and almost sure convergence}. J. Reine
Angew. Math. 404 (1990), 140--161.

\bibitem{CF} Q. Chu and N. Frantzikinakis, \textit{Pointwise convergence
for cubic and polynomial ergodic averages of non-commuting
transformations}, Ergod. Th. and Dynam. Sys. {\bf 32} (2012),
877-897.


\bibitem{Denker} M. Denker, \textit{On strict ergodicity}, Math. Z. 134 (1973),
231--253.

\bibitem{DCS} M. Denker, C. Grillenberger, and K. Sigmund, \textit{Ergodic
theory on compact spaces}, Lecture Notes in Mathematics, Vol. 527.
Springer-Verlag, Berlin-New York, 1976. iv+360 pp.

\bibitem{DL} T, Downarowicz and Y. Lacroix,  \textit{Forward mean proximal pairs
and zero entropy}, Israel J. Math. {\bf 191} (2012), 945¨C957.


\bibitem{F67}  H. Furstenberg, \textit{Disjointness in ergodic theory, minimal sets,
and a problem in Diophantine approximation}, Math. Systems Theory,
{\bf 1} (1967), 1-49.

\bibitem{F77} H. Furstenberg, \textit{Ergodic behavior of diagonal measures and a
theorem of Szemer\'edi on arithmetic progressions}. J. Analyse Math.
31 (1977), 204--256.

\bibitem{F} H. Furstenberg, \textit{Recurrence in ergodic theory and
combinatorial number theory}, M. B. Porter Lectures. Princeton
University Press, Princeton, N.J., 1981.

\bibitem{FW89} H. Furstenberg and B. Weiss, \textit{ On almost $1$-$1$
extensions}, Israel J. Math. 65 (1989), no. 3, 311--322.

\bibitem{G94} E. Glasner, \textit{Topological ergodic decompositions and
applications to products of powers of a minimal transformation}, J.
Anal. Math., {\bf 64} (1994), 241--262.

\bibitem{Glasner} E. Glasner, \textit{Ergodic theory via joinings}, Mathematical Surveys
and Monographs, 101. American Mathematical Society, Providence, RI,
2003. xii+384 pp.

\bibitem{GW94} E. Glasner and B. Weiss, \textit{Strictly ergodic, uniform positive
entropy models}, Bull. Soc. Math. France 122 (1994), no. 3,
399--412.

\bibitem{GW06} E. Glasner and B. Weiss, \textit{On the interplay between measurable and
topological dynamics}, Handbook of dynamical systems. Vol. 1B,
597--648, Elsevier B. V., Amsterdam, 2006.

\bibitem{HanRao} G. Hansel and J. P. Raoult, \textit{Ergodicity, uniformity and unique
ergodicity}, Indiana Univ. Math. J. 23 (1973/74), 221--237.

\bibitem{HM} M. Hochman,  \textit{On notions of determinism in topological
dynamics}, Ergod. Th. and Dynam. Sys. {\bf 32} (2012), 119-140.

\bibitem{H} B. Host, \textit{Ergodic seminorms for commuting transformations and
applications}, Studia Math. 195 (2009), no. 1, 31--49.

\bibitem{HK05} B. Host and B. Kra, \textit{Nonconventional averages and
nilmanifolds}, Ann. of Math., 161 (2005) 398--488.

\bibitem{HK09} B. Host and B. Kra, \textit{Uniformity norms on $l^\infty$ and applications},
J. Anal. Math., {\bf 108} (2009), 219--276.

\bibitem{HKM} B. Host, B. Kra and A. Maass, \textit{Nilsequences and a structure
theory for topological dynamical systems}, Advances in Mathematics,
224 (2010) 103--129.

\bibitem{HSY} W. Huang, S. Shao and X. Ye, {\it Higher order Bohr problem and higher order almost automorphy}, submitted.

\bibitem{Jewett} R.I. Jewett, \textit{The prevalence of uniquely ergodic systems}, J.
Math. Mech. 19 1969/1970 717--729.

\bibitem{Krieger} W. Krieger, \textit{On unique ergodicity}, Proceedings of the Sixth
Berkeley Symposium on Mathematical Statistics and Probability (Univ.
California, Berkeley, Calif., 1970/1971), Vol. II: Probability
theory, pp. 327--346, Univ. California Press, Berkeley, Calif.,
1972.

\bibitem{Lehrer} E. Lehrer, \textit{Topological mixing and uniquely ergodic
systems}, Israel J. Math. 57 (1987), no. 2, 239--255.


\bibitem{Lin} E. Lindenstrauss, {\it Pointwise theorems for amenable groups}.
Invent. Math. {\bf 146} (2001), 259-295.


\bibitem{SW} O. Shilon and B. Weiss, {\em Universal minimal topological dynamical system}, Israel Journal
of Math. {\bf 160}(2007), 119-141.

\bibitem{SJ} J. Serafin, \textit{Non-existence of a universal zero-entropy system}.
Israel J. Math. {\bf 194} (2013), 349-358.

\bibitem{SY} S. Shao and X. Ye, \textit{Regionally proximal relation of order $d$ is an
equivalence one for minimal systems and a combinatorial
consequence}, Adv. in Math., {\bf 231}(2012), 1786-1817.

\bibitem{Tao} T. Tao, \textit{Norm convergence of multiple ergodic averages for
commuting transformations}, Ergodic Theory Dynam. Systems 28 (2008),
no. 2, 657--688.


\bibitem{Walsh} M. Walsh, \textit{Norm convergence of nilpotent ergodic averages},
Ann. of Math., 175 (2012) 1667--1688.

\bibitem{Weiss85} B. Weiss, \textit{Strictly ergodic models for dynamical systems},
Bull. Amer. Math. Soc. (N.S.) {\bf 13} (1985),  143--146.

\bibitem{Weiss89} B. Weiss, \textit{Countable generators in dynamics -- Universal minimal models},
Contemporary Mathematics 94, (1989), 321-326.

\bibitem{Weiss95} B. Weiss, \textit{Multiple recurrence and doubly minimal systems},
Topological dynamics and applications (Minneapolis, MN, 1995),
189--196, Contemp. Math., 215, Amer. Math. Soc., Providence, RI,
1998.

\bibitem{Weiss00} B. Weiss, \textit{Single orbit dynamics}, CBMS Regional Conference
Series in Mathematics, 95. American Mathematical Society,
Providence, RI, 2000. x+113 pp.

\bibitem{Z05} T. Ziegler, \textit{ A non-conventional ergodic theorem for a nilsystem},
Ergodic Theory Dynam. Systems 25 (2005), no. 4, 1357--1370.

\bibitem{Z} T. Ziegler, \textit{Universal characteristic factors and Furstenberg
averages}. J. Amer. Math. Soc. 20 (2007), no. 1, 53--97.

\end{thebibliography}
\end{document}